\makeatletter \@addtoreset{equation}{section}
\DeclareMathOperator{\deck}{Deck}
\DeclareMathOperator{\gal}{Gal}
\DeclareMathOperator{\id}{id}
\newtheorem{theorem}{Theorem}[section]
\newtheorem*{theorem2.1'}{Theorem 2.1$'$}
\newtheorem*{theorem2.2'}{Theorem 2.2$'$}
\newtheorem*{theorem2.3'}{Theorem 2.3$'$}
\newtheorem*{corollary1}{Corollary 1}
\newtheorem*{corollary2}{Corollary 2}
\newtheorem*{corollary}{Corollary}
\newtheorem{proposition}[theorem]{Proposition}
\newtheorem{lemma}[theorem]{Lemma}
\theoremstyle{remark}
\newtheorem{remark}{Remark}
\begin{document}

\title{Riemann surfaces and the Galois correspondence
\footnotetext{2010 {\sl Mathematics Subject Classification}: \
30F10, 14H05, 14H55, 12F10.}
\footnotetext{{\it Key words and phrases}: \ Riemann surface, algebraic function, Galois correspondence, filter base, (up-)harmonious equivalence, monodromy theorem.}
 }

\author{Junyang Yu \\[7pt]
\textit{\normalsize To the memory of my mother}}
%
%
\date{}
\maketitle

\begin{abstract}
  In this paper we introduce a space with some additional topologies using filter bases and renew the definition of Riemann surfaces of algebraic functions. We then present a Galois correspondence between these Riemann surfaces and their deck transformation groups. We also extend the monodromy theorem to the case that the global analytic function possesses singularities, which can be non-isolated.
\end{abstract}

\section{Introduction}

Let us recall two points lying in algebra and complex analysis respectively. At first, it is known that the Galois correspondence theorem, which is called the fundamental theorem of Galois theory, is one of the most important results in modern algebra (refer to \cite{E}, \cite{J}, \cite{Mo} and~\cite{R}, etc). The Galois correspondence and related issues have been extended to a number of cases, see e.g.\ \cite{BJ}, \cite{CH}, \cite{Kh2}, \cite{Ko}, \cite{MM}, \cite{PS}, \cite{Se}, \cite{Sz} and~\cite{V}. Then, we look at algebraic functions and their Riemann surfaces. Algebraic functions are studied in both function theory and algebraic geometry. To deal with the trouble of multivaluedness of functions Riemann designed the ``Riemann surface", which is a source of some modern mathematical branches. The theory of Riemann surfaces also provides a model for developments in many research areas in mathematics. There is a lot of literature in Riemann surfaces and algebraic functions, see 
\cite{AS}, \cite{Bl}, \cite{C}, \cite{Do}, \cite{Fo}, \cite{G}, \cite{I}, \cite{Kl}, \cite{Mi}, \cite{Na}, \cite{Ne}, \cite{Sp}, \cite{Sz}, \cite{V} and~\cite{We}, etc.

Originally, a Riemann surface may be regarded as a covering space (surface) of the (extended) complex plane (or a part of it). We may consider the Galois correspondence in the case of covering spaces. In fact, there is a correspondence in covering spaces similar to the Galois correspondence in the classical Galois theory, see \cite[13d]{Fu}, \cite{Kh1} and~\cite{Kh2} (in~\cite{Kh1} and~\cite{Kh2} finite ramified coverings over Riemann surfaces were also considered). If we observe \cite[Theorem~(8.12)]{Fo}, we may expect the Galois correspondence occurs between Riemann surfaces of algebraic functions and covering transformation groups, even in general infinite cases. However, branch points become a key problem.

In order to deal with branch points and multivaluedness of functions we employ filter bases. This idea is inspired by~\cite{Y1} and~\cite{Y2} and develops therefrom. About the notions of filters and filter bases, which were originated by H.~Cartan, we refer to \cite[\S6 and~\S7 of Chapter~I]{Bo} and \cite[Chapter~X]{Du}. Using filter bases we introduce a space with some additional topologies, which we call a \textit{universal topological space} (see Section~2). By using neighborhoods in this space we can deal with branch points in a natural manner, which enables us to carry out algebraic operations of functions and germs freely. To this end, we study presheaves on a universal topological space in Section~2. In addition, we introduce some more general notions for the extension of the monodromy theorem.

In Section~3, we present a new version of the definition of Riemann surfaces of algebraic functions, where the notions of \textit{harmonious equivalence} and \textit{up-harmonious equivalence} are introduced. We also consider analytic continuations in more extensive senses and extend the monodromy theorem to the case that the global analytic function possesses singularities, which can be non-isolated.

Finally, in Section~4 we present a Galois correspondence between Riemann surfaces of algebraic functions (\textit{algebraic Riemann surfaces}, see Subsection~3.4) and their deck transformation groups (Theorems~\ref{theorem4.1} and~\ref{theorem4.10}), which may be regarded as a geometric version of the classical Galois correspondence.

\section{A universal topological space}

\subsection{A perfect filterbase structure system and a universal topological space}

We recall that a nonempty family~$\mathcal B$ of subsets of a nonempty set~$X$ is a \textit{filter base} precisely if $\mathcal B$ does not contain the empty set and the intersection of any two sets of~$\mathcal B$ contains a set of~$\mathcal B$, see \cite[\S6.3 of Chapter~I]{Bo} and \cite[Definition~(2.1) in Chapter~X]{Du}. Suppose $\mathfrak B$ is a family of filter bases in the nonempty set~$X$. Two filter bases $\mathcal B_1$ and~$\mathcal B_2$ in~$\mathfrak B$ are said to be \textit{equivalent}, denoted $\mathcal B_1\thicksim\mathcal B_2$, if $\mathcal B_1\vdash\mathcal B_2$ and $\mathcal B_2\vdash\mathcal B_1$, where ``$\vdash$" means ``\textit{be subordinate to}" (i.e.\ $\mathcal B_1\vdash\mathcal B_2$ precisely if for each $B_2\in\mathcal B_2$ there exists $B_1\in\mathcal B_1$ such that $B_1\subseteq B_2$, see \cite[Definition~(2.4) in Chapter~X]{Du}). It is obvious that this really is an equivalence relation in~$\mathfrak B$. The equivalence class of~$\mathcal B$ is denoted~$\widetilde{\mathcal B}$.

Suppose $\mathfrak B_1$ and $\mathfrak B_2$ are two families of filter bases in a (nonempty) set~$X$. If for each $\mathcal B_2\in\mathfrak B_2$ there exists $\mathcal B_1\in\mathfrak B_1$ such that $\mathcal B_1\vdash\mathcal B_2$ (resp.\ $\mathcal B_1\thicksim\mathcal B_2$) then we say that $\mathfrak B_1$ is a (resp.\ an \textit{exact}) \textit{refinement} of~$\mathfrak B_2$. If $\mathfrak B_1$ and $\mathfrak B_2$ are (resp.\ exact) refinements of one another then we say that they are \textit{compatible} (resp.\ \textit{equivalent}), denoted $\mathfrak B_1\,\dot{\thicksim}\,\mathfrak B_2$ (resp.\ $\mathfrak B_1\thicksim\mathfrak B_2$). Obviously both the exact refinement relation and the refinement relation of filterbase families are preorders and they are also partial orders if ``$=$" means ``$\thicksim$" or~``$\dot{\thicksim}$". Both the compatibility and the  equivalence are equivalence relations and the equivalence implies the compatibility.

Suppose $(X,\mathscr T)$ is a (nonempty) topological space ($\mathscr T$ is the topology on~$X$) and $\mathfrak B$ is a family of filter bases consisting of open subsets of~$X$. Suppose for each $x\in X$ there is precisely one filter base $\mathcal B\in\mathfrak B$ such that $\mathcal B\to x$ (``$\to$" means ``converge to", i.e.\ $\,\mathcal B\to x$ precisely if for any neighborhood~$U$ of~$x$ there exists $B\in\mathcal B$ such that $B\subseteq U$, see \cite[Definition~(2.3) in Chapter~X]{Du}), or if there is another filter base $\mathcal B'\in\mathfrak B$ such that $\mathcal B'\to x$ then $\mathcal B'\thicksim\mathcal B$. If $\mathcal B\in\mathfrak B$ does not converge to any points in~$X$, then we consider that it converges to some ``ideal points". Generally, it may be allowed that some filter bases (in~$\mathfrak B$) converging in~$(X,\mathscr T)$ also converge to ideal points (of course, the Hausdorff condition will remove this case). Here we consider that $\mathcal B'$ and $\mathcal B$ converge to the same ideal point(s) precisely if $\mathcal B'\thicksim\mathcal B$. We also assume that if $\mathcal B\in\mathfrak B$, $\mathcal B\to x\in X$, then $x\notin B$ for any $B\in\mathcal B$. We call the filterbase family~$\mathfrak B$ satisfying the above conditions a \textit{perfect filterbase structure system} on~$(X,\mathscr T)$.

Suppose $\mathfrak B$ is a perfect  filterbase structure system on~$X$ and $\mathtt I$ denotes the set of all ideal points of~$\mathfrak B$. Let $\hat X:=X\cup\mathtt I$. Let
$$
\hat{\mathscr B}(x):=\{\hat B=B\cup\{x\}:\, B\in\mathcal B, \text{ where } \mathcal B\in\mathfrak B \text{ and } \mathcal B\to x\}
$$
for $x\in\hat X$ and
$$
\hat{\mathscr B}:=\bigcup_{x\in\hat X}\hat{\mathscr B}(x).
$$
Noticing every filter base $\mathcal B\in\mathfrak B$ consists of open subsets of~$X$, it is easy to verify that $\hat{\mathscr B}$ is a basis for some topology on~$\hat X$. So we obtain a topology on~$\hat X$ determined by~$\hat{\mathscr B}$. We call this new topology the \textit{filterbase topology} or \textit{partial topology} on~$\hat X$ determined by~$\mathfrak B$, denoted~$\hat{\mathscr T}$. Set $\hat B\in\hat{\mathscr B}(x)$ is called a \textit{basic $($open$)$ partial neighborhood} of~$x$, $\hat{\mathscr B}(x)$ a \textit{basic $($open$)$ partial neighborhood basis} at~$x$ ($x\in\hat X$) and $\hat{\mathscr B}$ a \textit{basic $($open$)$ partial neighborhood basis} on~$\hat X$.

Let $\check{\mathscr B}(x)$ be an open neighborhood basis at~$x$ in $(X,\mathscr T)$ for $x\in X$ and $\check{\mathscr B}(x):=\hat{\mathscr B}(x)$ (or an open neighborhood basis at~$x$ in $(\hat X,\hat{\mathscr T})$) for $x\in\mathtt I$. Let
$$
\check{\mathscr B}:=\bigcup_{x\in\hat X}\check{\mathscr B}(x).
$$
Then it is easy to verify that $\check{\mathscr B}$ is a basis for some topology on (set)~$\hat X$. Again we obtain a topology on~$\hat X$ determined by~$\check{\mathscr B}$, which is called the \textit{essential topology} on~$\hat X$ and denoted~$\check{\mathscr T}$. We call $\check{\mathscr B}$ (resp.\ $\check{\mathscr B}(x)$) a \textit{basic $($open$)$ essential neighborhood basis} (resp.\ at~$x$).

Generally, choose a subset~$A$ of~$\hat X$ containing the ideal point set~$\mathtt I$, which we call a \textit{partial point set} (of~$\hat X$). Let $\hat{\mathscr B}_A(x)$ be an open neighborhood basis at~$x$ in $(X,\mathscr T)$ for $x\in X\!\setminus\!A$ and $\hat{\mathscr B}_A(x):=\hat{\mathscr B}(x)$ for $x\in A$. Then
$$
\hat{\mathscr B}_A:=\bigcup_{x\in\hat X}\hat{\mathscr B}_A(x)
$$
is also a basis for some topology on (set)~$\hat X$. In this way we obtain a topology on~$\hat X$, determined by~$\hat{\mathscr B}_A$, which we call the \textit{mixed topology} on~$\hat X$ with the partial point set~$A$ or the \textit{$A$-mixed topology} on~$\hat X$, denoted~$\hat{\mathscr T}(A)$, and $\hat{\mathscr B}_A$ (resp.\ $\hat{\mathscr B}_A(x)$) is called an \textit{$A$-mixed  neighborhood basis} (resp.\ at~$x$).

The space~$\hat X$, equipped with a filterbase topology~$\hat{\mathscr T}$, an essential topology~$\check{\mathscr T}$ and some mixed topologies~$\hat{\mathscr T}(A)$, is called a \textit{universal topological space} on $(X,\mathscr T)$ determined by~$\mathfrak B$. The universal topological space~$\hat X$ on $(X,\mathscr T)$ is also denoted $(X,\mathscr T;\hat{X},\hat{\mathscr T})$. The set~$\mathtt I$ is also called the \textit{ideal point set} of~$\hat X$.

We remark here that if $A=\hat X$ then $\hat{\mathscr T}(A)=\hat{\mathscr T}$ and if $A=\mathtt I$ then $\hat{\mathscr T}(A)=\check{\mathscr T}$. We also remark that for the topological space $(X,\mathscr T)$ we can obtain a number of filterbase topologies by different perfect filterbase structure systems on~$X$ (see e.g.\ Subsection~2.5). Suppose $\hat{\mathscr T}_1$ and $\hat{\mathscr T}_2$ are filterbase topologies determined by two perfect filterbase structure systems~$\mathfrak B_1$ and~$\mathfrak B_2$ on~$(X,\mathscr T)$, respectively. If $\mathfrak B_1\,\dot{\thicksim}\,\mathfrak B_2$ then it follows that $\hat{\mathscr T}_1=\hat{\mathscr T}_2$ (under some assumption on ideal points) and generally the two mixed topologies $\hat{\mathscr T}_1(A)$ and~$\hat{\mathscr T}_2(A)$ are the same. Therefore, the universal topological spaces $(X,\mathscr T;\hat{X},\hat{\mathscr T}_1)$ and $(X,\mathscr T;\hat{X},\hat{\mathscr T}_2)$ we obtain here are the same.

If the topological space $(\hat X,\check{\mathscr T})$ is Hausdorff, then we say that the universal topological space~$\hat X$ is \textit{Hausdorff}. Thus that $(X,\mathscr T;\hat{X},\hat{\mathscr T})$ is Hausdorff implies that $(\hat{X},\hat{\mathscr T}(A))$ is Hausdorff and specially both $(X,\mathscr T)$ and $(\hat{X},\hat{\mathscr T})$ are also Hausdorff.

Let
$$
\mathscr B(x):=\{B:\, B\in\mathcal B, \text{ where } \mathcal B\in\mathfrak B \text{ and } \mathcal B\to x\}
$$
for $x\in\hat X$. Clearly $\mathscr B(x)$ is a filter base. We call $\mathscr B(x)$ a \textit{basic $($open$)$ punctured partial neighborhood basis} at~$x$ and set $B\in\mathscr B(x)$ a \textit{basic $($open$)$ punctured partial neighborhood} of~$x$ ($x\in\hat X$). In the paper, we may also use the terminology \textit{deleted} to replace the ``punctured". Let
$$
\mathscr B:=\bigcup_{x\in\hat X}\mathscr B(x),
$$
which is called a \textit{basic $($open$)$ punctured partial neighborhood basis} on~$\hat X$.

For a partial point set~$A$ ($\mathtt I\subseteq A\subseteq\hat X$), let $\mathscr B_A(x)$ be an open punctured neighborhood basis at~$x$ in $(X,\mathscr T)$ for $x\in X\!\setminus\!A$ and $\mathscr B_A(x):=\mathscr B(x)$ for $x\in A$. Let
$$
\mathscr B_A:=\bigcup_{x\in\hat X}\mathscr B_A(x),
$$
which is called an \textit{$A$-mixed punctured neighborhood basis} on~$\hat X$.

Suppose $(X,\mathscr T;\hat{X},\hat{\mathscr T})$ is a universal topological space determined by a perfect filterbase structure system~$\mathfrak B$ and $\hat Y$ is a subset of~$\hat X$. Suppose $\mathtt I_0\subseteq X$, $\mathtt I_1:=\mathtt I\cup\mathtt I_0$ ($\mathtt I$ is the ideal point set of~$\hat X$), $\mathtt I':=\mathtt I_1\cap\hat Y$ and $Y:=\hat Y\setminus\mathtt I'\ne\emptyset$. We call $(Y,\mathscr T';\hat{Y},\hat{\mathscr T}')$ a \textit{universal topological subspace} of $(X,\mathscr T;\hat{X},\hat{\mathscr T})$, where $\mathscr T':=\mathscr T\cap Y$ and the \textit{partial topology} $\hat{\mathscr T}':=\hat{\mathscr T}\cap\hat{Y}$ are induced topologies, the $A'$-\textit{mixed topologies}~$\hat{\mathscr T}'(A')$ of~$\hat Y$ are just the induced topologies $\hat{\mathscr T}(A_1)\cap\hat Y$, where $\mathtt I_1\subseteq A_1\subseteq\hat X$, $A'=A_1\cap\hat Y$ and $\hat{\mathscr T}(A_1)$ is the $A_1$-mixed topology of~$\hat X$, and specially the \textit{essential topology} $\check{\mathscr T}'$ of~$\hat Y$ is the induced topology $\hat{\mathscr T}(\mathtt I_1)\cap\hat{Y}$. The set~$\mathtt I'$ is called the \textit{ideal point set} of~$\hat Y$. If $\mathcal B\to x\in\hat X$ ($\mathcal B\in\mathfrak B$), then we denote $\mathcal B$ by~$\mathcal B(x)$. Let $\mathcal B_Y(x):=\mathcal B(x)\cap Y=\{B\cap Y:\,B\in\mathcal B(x)\}$ for $x\in\hat X$ and $\mathfrak B_Y:=\{\mathcal B_Y(y):\,y\in\hat Y\}$. Usually we assume that $B\cap Y\ne\emptyset$ for any $B\in\mathcal B(y)$, $y\in\hat Y$ (hence $\mathcal B_Y(y)$ is a filter base) and one of the following two assumptions holds: 
\medskip\par\noindent
(1)~$(\hat X,\check{\mathscr T})$ is Hausdorff; \smallskip\par\noindent
(2)~$\mathcal B_Y(y)\thicksim\mathcal B(y)$ for each $y\in\hat Y$. 
\medskip\par\noindent
Then $\mathfrak B_Y$ is a perfect filterbase structure system on $(Y,\mathscr T_Y)$ ($\mathscr T_Y:=\mathscr T\cap Y$ is the induced topology~$\mathscr T'$), called the \textit{induced perfect filterbase structure system} by~$\mathfrak B$ on~$Y$. Thus, we obtain a universal topological space $(Y,\mathscr T_Y;\hat{Y},\hat{\mathscr T}_{\hat Y})$ determined by~$\mathfrak B_Y$, which is just the universal topological subspace $(Y,\mathscr T';\hat{Y},\hat{\mathscr T}')$ (defined above) of $(X,\mathscr T;\hat{X},\hat{\mathscr T})$ (cf.\ the proof of Theorem~\ref{theorem2.1}(4) below). If $\hat Y$ is an open subset of $(\hat X,\hat{\mathscr T}(\mathtt I_1))$, then the subspace $(Y,\mathscr T';\hat{Y},\hat{\mathscr T}')$ is called an \textit{open set} in~$(X,\mathscr T;\hat{X},\hat{\mathscr T})$.

\subsection{Partial\, continuity, \ essential\, continuity\, and\, (exact)\, continuity}

Suppose $(X,\mathscr T;\hat{X},\hat{\mathscr T})$ and $(Y,\mathscr T';\hat{Y},\hat{\mathscr T}')$ are two universal topological spaces. Let $\hat f$ be a mapping from $\hat Y$ to~$\hat X$. The mapping~$\hat f$ is said to be \textit{partially continuous} (resp.\ at a point $y\in\hat Y$) if $\hat f:\,(\hat Y,\hat{\mathscr T}')\to(\hat X,\hat{\mathscr T})$ is continuous (resp.\ at~$y$). If $\hat f:\,(\hat Y,\check{\mathscr T}')\to(\hat X,\check{\mathscr T})$ is continuous (resp.\ at~$y\in\hat Y$), where $\check{\mathscr T}$ and~$\check{\mathscr T}'$ are the essential topologies of $\hat X$ and~$\hat Y$ respectively, then we say that $\hat f:\,\hat Y\to\hat X$ is \textit{essentially continuous} (resp.\ at~$y$). Now suppose $\hat f$ is partially continuous and $\hat f(Y)\subseteq X$. If $\hat f|_Y:\,(Y,\mathscr T')\to(X,\mathscr T)$ is also continuous 
then we say that $\hat f:\,\hat Y\to\hat X$ is (\textit{exactly}) \textit{continuous}. 
Evidently the (exact) continuity of~$\hat f$ implies the essential continuity of~$\hat f$.

Suppose $\hat f:\,\hat Y\to\hat X$ is a bijective. It is called an \textit{essential} (resp.\ \textit{a partial}) \textit{homeomorphism} if $\hat f:\,(\hat Y,\check{\mathscr T}')\to(\hat X,\check{\mathscr T})$ (resp.\ $\hat f:\,(\hat Y,\hat{\mathscr T}')\to(\hat X,\hat{\mathscr T})$) is a homeomorphism. It is called an (\textit{exact}) \textit{homeomorphism} if $\hat f(Y)=X$ and both $\hat f:\,(\hat Y,\hat{\mathscr T}')\to(\hat X,\hat{\mathscr T})$ and $\hat f|_Y:\,(Y,\mathscr T')\to(X,\mathscr T)$ are homeomorphisms. The (exact) homeomorphism obviously implies the essential homeomorphism.

As for local homeomorphisms, we may consider partial localness, essential localness and exact localness respectively. Then we may further locally consider partial, essential and exact homeomorphisms, respectively. In this paper we define \textit{essential local homeomorphisms} and (\textit{exact}) \textit{local homeomorphisms} as follows. Suppose $\hat f:\,\hat Y\to\hat X$ is a mapping. It is called an \textit{essential local homeomorphism} if $\hat f:\,(\hat Y,\check{\mathscr T}')\to(\hat X,\check{\mathscr T})$ is a local homeomorphism ($\check{\mathscr T}$ and $\check{\mathscr T}'$ are essential topologies on~$\hat X$ and~$\hat Y$ respectively). It is called an (\textit{exact}) \textit{local homeomorphism} if for each point $y\in\hat Y$ there exists an open essential neighborhood~$\check N$ of~$y$ (i.e.\ an open neighborhood of~$y$ in~$(\hat Y,\check{\mathscr T})$) such that $\hat f(\check N)$ is an open essential neighborhood of~$\hat f(y)$ and $\hat f|_{\check N}:\, \check N\to\hat f(\check N)$ is an exact (i.e.\ partial and essential) homeomorphism.
Obviously, the (exact) local homeomorphism implies the essential local homeomorphism.

Let $\hat X$, $\hat Y$ and $\hat Z$ be universal topological spaces. It is evident that if $\hat f:\,\hat Y\to\hat X$ and $\hat g:\,\hat Z\to\hat Y$ are (exactly) (resp.\ partially, essentially) continuous then $\hat f{\scriptstyle\,\circ\,}\hat g:\,\hat Z\to\hat X$ is also (exactly) (resp.\ partially, essentially) continuous.

\subsection{Covering maps and deck transformations}

Suppose $(X,\mathscr T;\hat{X},\hat{\mathscr T})$, $(Y,\mathscr T';\hat{Y},\hat{\mathscr T}')$ and $(Z,\mathscr T'';\hat{Z},\hat{\mathscr T}'')$ are universal topological spaces. Suppose $\hat p:\hat Y\to\hat X$ and $\hat q:\hat Z\to\hat X$ are essentially continuous mappings. For $x\in\hat X$, the set~$\hat p^{-1}(x)$ is called the \textit{fiber} of~$\hat p$ over~$x$. A mapping $\hat f:\hat Z\to\hat Y$ is called \textit{fiber-preserving} if $\hat p{\scriptstyle\,\,\circ\,}\hat f=\hat q$.

Suppose for every $x\in X\setminus\hat p(\hat Y\!\setminus\!Y)$ there exists $U\in\mathscr T(x)$, where $\mathscr T(x)$ denotes the set of all open neighborhoods (i.e.\ the open neighborhood system) of $x\in X$ in $(X,\mathscr T)$, and for every point $\hat x\in(\hat X\!\setminus\!X)\cup\hat p(\hat Y\!\setminus\!Y)$ there exists $\hat U\in\hat{\mathscr T}(\hat x)$ (the partial neighborhood system at~$\hat x$, i.e.\ the open neighborhood system at~$\hat x$ in $(\hat X,\hat{\mathscr T})$) such that
$$
\hat p^{-1}(U)=\bigcup_{j\in J}V_j\quad \text{and}\quad \hat p^{-1}(\hat U)=\bigcup_{k\in K}\hat V_k,
$$
where $V_j\in\mathscr T'$ ($j\in J$) are disjoint and $\hat V_k\in\hat{\mathscr T}'$ ($k\in K$) are disjoint. If all the mappings $\hat p|_{V_j}\!:V_j\to U$ ($j\in J$) and $\hat p|_{\hat V_k}\!:\hat V_k\to\hat U$ ($k\in K$) are essential homeomorphisms, then $\hat p:\hat Y\to\hat X$ is called an \textit{essential covering map}. If all the mappings $\hat p|_{V_j}$ ($j\in J$) and $\hat p|_{\hat V_k}$ ($k\in K$) are exact homeomorphisms, then $\hat p:\hat Y\to\hat X$ is called a (or an \textit{exact}) \textit{covering map}.

Let $\hat p:\hat Y\to\hat X$ be an exact (resp.\ essential) covering map. A fiber-preserving (exact) (resp.\ essential) homeomorphism $\hat f:\,\hat Y\to\hat Y$ is called a (or an \textit{exact}) (resp.\ an \textit{essential}) \textit{deck transformation} of $\hat p:\,\hat Y\to\hat X$. We denote the set of all (exact) deck transformations of $\hat p:\,\hat Y\to\hat X$ by $\deck(\hat Y\!\overset{\scriptscriptstyle\hat p}{\to}\!\hat X)$ or $\deck(\hat Y\!/\hat X)$. Then $\deck(\hat Y\!/\hat X)$ forms a group with operation the composition of mappings.

\subsection{A universal topological space derived by a presheaf}

Suppose $(X,\mathscr T;\hat{X},\hat{\mathscr T})$ is a universal topological space with a basic open (resp.\ punctured) partial neighborhood basis~$\hat{\mathscr B}$ (resp.~$\mathscr B$). Suppose $(\mathcal F,\rho)$, where $\mathcal F=(\mathcal F(U))_{U\in\mathscr T}$, is a presheaf of some algebraic system on~$(X,\mathscr T)$ (refer to \cite[\S6]{Fo}). Denote $f|_V:=\rho^U_V(f)$ for $U$, $V\in\mathscr T$, $V\subseteq U$ and $f\in\mathcal F(U)$.

For $\hat U\in\hat{\mathscr T}$, $\hat U\ne\emptyset$, let $U$ be the interior of $\hat U\!\setminus\!\mathtt I$ in $(X,\mathscr T)$ ($\mathtt I$ is the ideal point set). Such an open set~$U$ in $(X,\mathscr T)$ is not empty, called the \textit{body} of~$\hat U$ and denoted~$\hat U^\circ$. If $\hat U\in\hat{\mathscr T}(x)$ ($x\in\hat X$), then the interior of $\hat U\!\setminus\!\mathtt I\!\setminus\!\{x\}$ in $(X,\mathscr T)$ is also a nonempty open set, which we call the ($\mathscr T$-)\textit{open punctured partial neighborhood} of~$x$ corresponding to~$\hat U$. 
Denote the set of all $\mathscr T$-open punctured partial neighborhoods of~$x$ by~$\hat{\mathscr T}^\circ(x)$ and let
$$
\hat{\mathscr T}^\circ:=\bigcup_{x\in\hat X}\hat{\mathscr T}^\circ(x).
$$

Let
$$
\mathcal F(\hat{\mathscr T}^\circ(x)):=\bigcup_{V\in\hat{\mathscr T}^\circ(x)}\mathcal F(V)
$$
for $x\in\hat X$, which is a disjoint union. In $\mathcal F(\hat{\mathscr T}^\circ(x))$ two elements $g_1\in\mathcal F(V_1)$ and $g_2\in\mathcal F(V_2)$ ($V_1$, $V_2\in\hat{\mathscr T}^\circ(x)$) are said to be \textit{equivalent} at~$x$, denoted $g_1\,\underset{x}{\hat{\thicksim}}\,g_2$, if there exists $V\in\hat{\mathscr T}^\circ(x)$ with $V\subseteq V_1\cap V_2$ such that $g_1|_V=g_2|_V$. It is easy to see that this really is an equivalence relation. The set
$$
\hat{\mathcal F}_x:=\mathcal F(\hat{\mathscr T}^\circ(x))\Big/\underset{x}{\hat{\thicksim}}
$$
of all equivalence classes is called the \textit{punctured partial stalk} of~$\mathcal F$ at the point $x\in\hat X$ and the equivalence class of $g\in\mathcal F(\hat{\mathscr T}^\circ(x))$ is called the \textit{punctured partial germ} of~$g$ at~$x$, denoted~$\langle g\rangle_x$.

For $x\in X$ we denote the (usual) germ of $f\in\mathcal F(U)$ at~$x$ by~$[f]_x$. For $g\in\mathcal F(\hat{\mathscr T}^\circ(x))$, if there exist $U\in\mathscr T(x)$ (the open neighborhood system at the point~$x$ in $(X,\mathscr T)$), $f\in\mathcal F(U)$ and $V\in\hat{\mathscr T}^\circ(x)$ such that $f|_V=g|_V$, then $x$ is called a \textit{usual point} of~$\langle g\rangle_x$ or~$g$, $f$ a \textit{usual element} at~$x$ corresponding to~$g$, $\langle g\rangle_x$ a \textit{usual punctured partial germ} and $g$ \textit{usual} at~$x$. Denote $\langle f\rangle_x:=\langle f|_V\rangle_x$. Then $\langle f\rangle_x=\langle g\rangle_x$. In this case we say that $\langle g\rangle_x$ and~$[f]_x$ are \textit{equivalent}, denoted $\langle g\rangle_x\thicksim[f]_x$.

By the condition of a perfect filterbase structure system on~$X$ we know that any $U\in\mathscr T$ is not a singleton. Now assume $(X,\mathscr T)$ is a $T_1$ space. For $x\in X$ let $\overset{\scriptscriptstyle\circ}{\mathscr T}(x):=\{U\!\setminus\!\{x\}:\,U\in\mathscr T,\ x\in U\}$, which is the punctured open neighborhood system at the point~$x$ in $(X,\mathscr T)$, and
$$
\overset{\scriptscriptstyle\circ}{\mathscr T}:=\bigcup_{x\in\hat X}\overset{\scriptscriptstyle\circ}{\mathscr T}(x).
$$
Then $\overset{\scriptscriptstyle\circ}{\mathscr T}\subseteq\mathscr T$. Let
$$
\mathcal F(\overset{\scriptscriptstyle\circ}{\mathscr T}(x)):=\bigcup_{U\in\overset{\scriptscriptstyle\circ}{\mathscr T}(x)}\mathcal F(U).
$$
In $\mathcal F(\overset{\scriptscriptstyle\circ}{\mathscr T}(x))$, two elements $f_1\in\mathcal F(U_1)$ and $f_2\in\mathcal F(U_2)$ ($U_1$, $U_2\in\overset{\scriptscriptstyle\circ}{\mathscr T}(x)$) are said to be \textit{equivalent} at~$x$, denoted $f_1\,\underset{x}{\overset{\scriptscriptstyle\circ}{\thicksim}}\,f_2$, if there exists $U\in\overset{\scriptscriptstyle\circ}{\mathscr T}(x)$ with $U\subseteq U_1\cap U_2$ such that $f_1|_U=f_2|_U$. Easily we see that this is an equivalence relation. The set
$$
\overset{\scriptscriptstyle\,\,\circ}{\mathcal F}_x:=\mathcal F(\overset{\scriptscriptstyle\circ}{\mathscr T}(x))\Big/\overset{\scriptscriptstyle\circ}{\underset{\scriptscriptstyle x}{\thicksim}}
$$
of all equivalence classes is called the \textit{punctured stalk} of~$\mathcal F$ at $x\in\hat X$ and the equivalence class of $f\in\mathcal F(\overset{\scriptscriptstyle\circ}{\mathscr T}(x))$ is called the \textit{punctured germ} of~$f$ at~$x$, denoted~$[f]^\circ_x$.

For $f\in\mathcal F(\overset{\scriptscriptstyle\circ}{\mathscr T}(x))$ ($x\in X$), letting $f\in\mathcal F(U_0)$ ($U_0\in\overset{\scriptscriptstyle\circ}{\mathscr T}(x)$), if there exist $U\in\mathscr T(x)$, $h\in\mathcal F(U)$ and $V\in\overset{\scriptscriptstyle\circ}{\mathscr T}(x)$ such that $V\subseteq U_0\cap U$ and $h|_V=f|_V$, then $x$ is called a \textit{full point} of~$f$ or~$[f]^\circ_x$, $h$ a \textit{full element} at~$x$ corresponding to~$f$, $[f]^\circ_x$ \textit{full} and $f$  \textit{full} at~$x$. Denote $[h]^\circ_x:=[h|_V]^\circ_x$. Then $[h]^\circ_x=[f]^\circ_x$. In this case we say that $[f]^\circ_x$ and~$[h]_x$ are \textit{equivalent}, denoted $[f]^\circ_x\thicksim[h]_x$.

For $g\in\mathcal F(\hat{\mathscr T}^\circ(x))$ ($x\in X$), letting $g\in\mathcal F(V_0)$ ($V_0\in\hat{\mathscr T}^{\circ}(x)$), if there exist $U\in\overset{\scriptscriptstyle\circ}{\mathscr T}(x)$, $f\in\mathcal F(U)$ and $V\in\hat{\mathscr T}^\circ(x)$ such that $V\subseteq V_0\cap U$ and $f|_V=g|_V$, then $x$ is called an \textit{unbranched point} or a \textit{complete point} of~$\langle g\rangle_x$ or~$g$, $f$ a \textit{complete element} at~$x$ corresponding to~$g$, $\langle g\rangle_x$ \textit{unbranched} or \textit{complete} and $g$ \textit{unbranched} or \textit{complete} at~$x$. Denote $\langle f\rangle_x:=\langle f|_V\rangle_x$. Then $\langle f\rangle_x=\langle g\rangle_x$. In this case we say that $\langle g\rangle_x$ and~$[f]^\circ_x$ are \textit{equivalent}, denoted $\langle g\rangle_x\thicksim[f]^\circ_x$. Obviously, the usualness of~$\langle g\rangle_x$ implies its completeness.

\smallskip
\begin{remark}\label{remark1}
If $\hat{\mathscr B}(x)$ is a basis for~$\mathscr T(x)$ at $x\in X$, then the punctured partial germ at~$x$ and the punctured germ at~$x$ are just the same.
\end{remark}
\smallskip

If $\mathcal F$ is a presheaf of fields (resp.\ rings, vector spaces, etc), then the punctured partial stalk~$\hat{\mathcal F}_x$ ($x\in\hat X$) and the punctured stalk~$\overset{\,\circ}{\mathcal F}_x$ ($x\in\hat X$) with the operation defined on punctured partial germs and punctured germs respectively, by means of the operation defined on representatives, are both fields (resp.\ rings, vector spaces, etc).

Define
$$
\hat{\mathfrak F}:=\bigcup_{x\in\hat X}\hat{\mathcal F}_x
$$
and
$$
\overset{\scriptscriptstyle\circ}{\mathfrak F}:=\bigcup_{x\in X}\overset{\scriptscriptstyle\,\,\circ}{\mathcal F}_x,
$$
which are the disjoint unions of all the punctured partial stalks over~$\hat X$ and all the punctured stalks over~$X$, respectively. Let
$$
\hat p:\ \hat{\mathfrak F}\longrightarrow\hat X \quad\text{and}\quad \overset{\scriptscriptstyle\circ}p:\ \overset{\scriptscriptstyle\circ}{\mathfrak F}\longrightarrow X
$$
be the projections, i.e.\ $\hat p(\langle g\rangle_x)=x$ for $\langle g\rangle_x\in\hat{\mathfrak F}$ ($x\in\hat X$) and $\overset{\scriptscriptstyle\circ}p([f]^\circ_x)=x$ for $[f]^\circ_x\in\overset{\scriptscriptstyle\circ}{\mathfrak F}$ ($x\in X$).

Let $\mathcal F^\circ_x$ be the set of all the complete punctured partial germs in~$\hat{\mathcal F}_x$ ($x\in X$), which is called the \textit{complete} (or \textit{unbranched}) \textit{punctured partial stalk} of~$\mathcal F$ at~$x$. Let
$$
\mathfrak F^\circ:=\bigcup_{x\in X}\mathcal F^\circ_x,
$$
which is the disjoint union of all the complete punctured partial stalks over~$X$, and $p:=\hat p|_{\mathfrak F^\circ}$. Then
$$
p:\ \mathfrak F^\circ\longrightarrow X
$$
is also a projection.

For nonempty $\hat U\in\hat{\mathscr T}$ with body~$\hat U^\circ\in\mathscr T$ and $f\in\mathcal F(\hat U^\circ)$ we denote
$$
\langle\hat U,f\rangle:=\{\langle f\rangle_x:\,x\in\hat U\}.
$$
Define
$$
\hat{\mathscr N}(\mathcal F):=\{\langle\hat U,f\rangle:\,\emptyset\ne\hat U\in\hat{\mathscr T},\ f\in\mathcal F(\hat U^\circ)\}
$$
and
$$
\hat{\mathscr N}(\mathcal F(\hat{\mathscr B})):=\{\langle\hat B,f\rangle:\,\hat B\in\hat{\mathscr B},\ f\in\mathcal F(\hat B^\circ)\},
$$
where $\hat B^\circ\in\mathscr B$ is the nonempty body of~$\hat B$. For $f\in\mathcal F(B)$, where $B\in\mathscr B(x)$ ($x\in\hat X$), define
$$
\hat{\mathscr N}_f(\mathcal F(\hat{\mathscr B}))(\langle f\rangle_x):=\{\langle\hat U,f|_{\hat U\setminus\{x\}}\rangle:\,\hat U\in\hat{\mathscr B}(x),\ \hat U\subseteq\hat B\},
$$
where $\hat B=B\cup\{x\}\in\hat{\mathscr B}(x)$. Then it is easy to verify that $\hat{\mathscr N}_f(\mathcal F(\hat{\mathscr B}))(\langle f\rangle_x)$ is a filter base and
$$
\hat{\mathscr N}(\mathcal F(\hat{\mathscr B}))=\bigcup\{\hat{\mathscr N}_f(\mathcal F(\hat{\mathscr B}))(\langle f\rangle_x):\,f\in\mathcal F(B),\ B\in\mathscr B(x),\ x\in\hat X\}.
$$

For nonempty $U\in\mathscr T$, if there exists a point $a\in U$ such that $f\in\mathcal F(U\!\setminus\!\{a\})$, then we denote $f\in\overset{\scriptscriptstyle\,\,\circ}{\mathcal F}(U)$, or generally we may define
\begin{equation}\label{2.0}
\overset{\scriptscriptstyle\,\,\circ}{\mathcal F}(U):=\{f:\,f\in\mathcal F(U\!\setminus\!C), \text{ where } C \text{ is a discrete point set in~}U\}.
\end{equation}
For $f\in\overset{\scriptscriptstyle\,\,\circ}{\mathcal F}(U)$
we denote
$$
\langle U,f\rangle:=\{\langle f\rangle_x:\,x\in U\}.
$$
Define
$$
\mathscr N^\circ(\mathcal F):=\{\langle U,f\rangle:\, \emptyset\ne U\in\mathscr T,\ f\in\overset{\scriptscriptstyle\,\,\circ}{\mathcal F}(U)\}
$$
and
$$
\mathscr N(\mathcal F(\mathscr B)):=\{\langle B,f\rangle:\, B\in\mathscr B,\ f\in\mathcal F(B)\}.
$$
For $f\in\mathcal F(B)$, where $B\in\mathscr B(x)$ ($x\in\hat X$), define
$$
\mathscr N_f(\mathcal F(\mathscr B))(\langle f\rangle_x):=\{\langle U,f|_ U\rangle:\, U\in\mathscr B(x),\ U\subseteq B\}
$$
and
\begin{equation}\label{2.1}
\mathfrak N(\mathcal F(\mathscr B)):=\{\mathscr N_f(\mathcal F(\mathscr B))(\langle f\rangle_x):\,f\in\mathcal F(B),\ B\in\mathscr B(x),\ x\in\hat X\}.
\end{equation}
Then $\mathscr N_f(\mathcal F(\mathscr B))(\langle f\rangle_x)$ is a filter base and
$$
\mathscr N(\mathcal F(\mathscr B))=\bigcup\{\mathcal N:\,\mathcal N\in\mathfrak N(\mathcal F(\mathscr B))\}.
$$

If for a common complete point $x\in X$ of $g_1\in\mathcal F(V_1)$ and $g_2\in\mathcal F(V_2)$, where $V_1$, $V_2\in\hat{\mathscr T}^\circ(x)$, letting $f_j|_{V_j\cap U_j}=g_j|_{V_j\cap U_j}$, where $f_j\in\mathcal F(U_j)$, $U_j\in\overset{\scriptscriptstyle\circ}{\mathscr T}(x)$ ($j=1$, $2$), the equality $\langle g_1\rangle_x=\langle g_2\rangle_x$ always implies $[f_1]^\circ_x=[f_2]^\circ_x$, then the presheaf~$\mathcal F$ is called \textit{consistent} at~$x$ (on $(X,\mathscr T;\hat X,\hat{\mathscr T})$). The presheaf is called \textit{consistent} (on $(X,\mathscr T;\hat X,\hat{\mathscr T})$) if it is consistent at all the complete points.

For a partial point set~$A$ ($\mathtt I\subseteq A\subseteq\hat X$) let $\hat{\mathcal F}_x(A):=\hat{\mathcal F}_x$ (the punctured partial stalk at~$x$) for $x\in A$ and $\hat{\mathcal F}_x(A):=\mathcal F^\circ_x$ (the complete punctured partial stalk at~$x$) for $x\in X\!\setminus\! A$. Denote
$$
\hat{\mathfrak F}(A):=\bigcup_{x\in\hat X}\hat{\mathcal F}_x(A)
$$
and define
$$
\hat{\mathscr N}(\mathcal F(\hat{\mathscr B}_A)):=\{\langle V,f\rangle:\, V\in\hat{\mathscr B}_A(x),\ f\in\mathcal F(V\!\setminus\!\{x\}),\ x\in\hat X\},
$$
where $\hat{\mathscr B}_A(x)$ is an $A$-mixed neighborhood basis at $x\in\hat X$.

\begin{theorem}\label{theorem2.1}
Suppose $(X,\mathscr T;\hat{X},\hat{\mathscr T})$ is a universal topological space with a basic open $($resp.\ punctured\,$)$ partial neighborhood basis~$\hat{\mathscr B}$ $($resp.~$\mathscr B)$. Suppose $\mathcal F$ is a presheaf on $(X,\mathscr T)$. In~$(2)$, $(3)$ and\/~$(4)$ below we also suppose $(X,\mathscr T)$ is a $T_1$~space and $\mathcal F$ is consistent. Then
\par\medskip
\noindent
$(1)$ $\hat{\mathscr N}(\mathcal F(\hat{\mathscr B}))$ is a basis for a topology on~$\hat{\mathfrak F}$, and so is $\hat{\mathscr N}(\mathcal F)$ for the same topology, denoted~$\hat{\mathscr T}(\mathcal F)$, and the projection
$$
\hat p:\ (\hat{\mathfrak F},\hat{\mathscr T}(\mathcal F))\longrightarrow(\hat X,\hat{\mathscr T})
$$
is a local homeomorphism.
\par\smallskip\noindent
$(2)$ $\mathscr N^\circ(\mathcal F)$ is a basis for a topology on~$\mathfrak F^\circ$, denoted~$\mathscr T^\circ(\mathcal F)$, and the projection
$$
p:\ ({\mathfrak F}^\circ,{\mathscr T}^\circ(\mathcal F))\longrightarrow(X,{\mathscr T})
$$
is a local homeomorphism.
\par\smallskip\noindent
$(3)$ For a partial point set~$A$ $(\mathtt I\subseteq A\subseteq\hat X)$, $\hat{\mathscr N}(\mathcal F(\hat{\mathscr B}_A))$ is a basis for a topology on~$\hat{\mathfrak F}(A)$, denoted~$\hat{\mathscr T}_A(\mathcal F)$, and the projection
$$
\hat p_{_A}:\ (\hat{\mathfrak F}(A),\hat{\mathscr T}_A(\mathcal F))\longrightarrow(\hat X,\hat{\mathscr T}(A))
$$
is a local homeomorphism.
\par\smallskip\noindent
$(4)$ $\mathfrak N(\mathcal F(\mathscr B))$ is a perfect filterbase structure system on $({\mathfrak F}^\circ,{\mathscr T}^\circ(\mathcal F))$ and under some obvious assumption $({\mathfrak F}^\circ,{\mathscr T}^\circ(\mathcal F); \hat{\mathfrak F},\hat{\mathscr T}(\mathcal F))$ is a universal topological space determined by~$\mathfrak N(\mathcal F(\mathscr B))$ with the basic $($resp.\ punctured\/$)$ partial neighborhood basis $\hat{\mathscr N}(\mathcal F(\hat{\mathscr B}))$ $($resp.\ $\mathscr N(\mathcal F(\mathscr B)))$. Moreover, the projection
$$
\hat p:\ \hat{\mathfrak F}\longrightarrow\hat X
$$
is an exact local homeomorphism and hence an essential local homeomorphism.
\end{theorem}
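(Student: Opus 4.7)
The plan is to follow the classical étalé-space construction of a presheaf, adapted to the universal topological space setting. For part~(1), I would verify the two basis axioms for $\hat{\mathscr N}(\mathcal F(\hat{\mathscr B}))$. Covering is immediate: any $\langle g\rangle_x\in\hat{\mathfrak F}$ admits a representative $g\in\mathcal F(V)$ with $V\in\hat{\mathscr T}^\circ(x)$, so choosing $\hat B\in\hat{\mathscr B}(x)$ with $\hat B^\circ\subseteq V$ puts $\langle g\rangle_x$ into $\langle\hat B,g|_{\hat B^\circ}\rangle$. For the intersection axiom, if $\langle g\rangle_x\in\langle\hat B_1,f_1\rangle\cap\langle\hat B_2,f_2\rangle$ then $\langle f_1\rangle_x=\langle f_2\rangle_x$ forces $f_1$ and $f_2$ to agree on some $W\in\hat{\mathscr T}^\circ(x)$, and any $\hat B\in\hat{\mathscr B}(x)$ with $\hat B\subseteq\hat B_1\cap\hat B_2$ and $\hat B^\circ\subseteq W$ supplies a common sub-basis member. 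That $\hat{\mathscr N}(\mathcal F)$ generates the same topology follows by writing each $\langle\hat U,f\rangle$ as a union of sets $\langle\hat B,f|_{\hat B^\circ}\rangle$ indexed over basic $\hat B\subseteq\hat U$. Finally, $\hat p|_{\langle\hat B,f\rangle}$ is a bijection onto $\hat B$ with inverse $y\mapsto\langle f\rangle_y$ continuous by the very definition of the basis, so $\hat p$ is a local homeomorphism.

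Parts~(2) and~(3) follow the same template with the appropriate neighborhood systems. The $T_1$ hypothesis on $(X,\mathscr T)$ makes $\overset{\scriptscriptstyle\circ}{\mathscr T}(x)$ a well-behaved punctured neighborhood system, while consistency of $\mathcal F$ is invoked precisely to reconcile different representatives at a common complete point~$x$: if $\langle g_1\rangle_x=\langle g_2\rangle_x$, then the punctured germs of the corresponding full elements also agree, so the definitions of $\mathscr T^\circ(\mathcal F)$ and $\hat{\mathscr T}_A(\mathcal F)$ are unambiguous. Part~(3) then mixes the two constructions: on $A$ one uses the partial basis from~(1), on $X\setminus A$ the punctured basis from~(2), and the basis and local-homeomorphism claims reduce to pasting these cases.

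Part~(4) is the substantive item. First, each $\mathscr N_f(\mathcal F(\mathscr B))(\langle f\rangle_x)\in\mathfrak N(\mathcal F(\mathscr B))$ converges to $[f]^\circ_x$ in $(\mathfrak F^\circ,\mathscr T^\circ(\mathcal F))$ directly from the definition of $\mathscr N^\circ(\mathcal F)$, and if another filter base from $\mathfrak N(\mathcal F(\mathscr B))$ converges to the same point, pushing down through the local homeomorphism $p$ from~(2) reduces uniqueness up to equivalence to the perfectness of $\mathfrak B$ on $X$ together with consistency. This establishes the perfect filterbase structure system on $(\mathfrak F^\circ,\mathscr T^\circ(\mathcal F))$. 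Its ideal points are by construction in bijection with $\hat{\mathfrak F}\setminus\mathfrak F^\circ$ (punctured partial germs over ideal or non-complete points of $\hat X$), and the basic partial neighborhoods $\hat{\mathscr N}_f(\mathcal F(\hat{\mathscr B}))(\langle f\rangle_x)$ match, via the Subsection~2.1 prescription $\hat B=B\cup\{x\}$, the basic filterbase neighborhoods of the new system; hence the induced filterbase topology on $\hat{\mathfrak F}$ coincides with $\hat{\mathscr T}(\mathcal F)$. The exact-local-homeomorphism claim for $\hat p$ then combines the partial-level statement of~(1) with the essential-level statement of~(2) on each $\langle\hat B,f\rangle$.

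The hard part will be the bookkeeping in~(4): pinning down the unstated ``obvious assumption'' --- presumably a Hausdorff-type or non-degeneracy condition ruling out distinct non-complete punctured partial germs from being identified with the same ideal point in the new filterbase structure system --- and then verifying that the resulting partial, essential, and mixed topologies on the étalé space match those produced by this system. Once those identifications are nailed down, the local-homeomorphism content of~(1)--(3) translates transparently into the exact-local-homeomorphism statement required of $\hat p$ in~(4).
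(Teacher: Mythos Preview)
Your proposal is correct and follows essentially the same approach as the paper: verify the basis axioms directly from the definition of punctured partial germs, read off the local homeomorphism on each basic patch $\langle\hat B,f\rangle$, and for part~(4) check convergence and equivalence of the filter bases in $\mathfrak N(\mathcal F(\mathscr B))$ to establish the perfect filterbase structure system. Two small clarifications: the elements of $\mathfrak F^\circ$ are punctured \emph{partial} germs $\langle f\rangle_x$, not punctured germs $[f]^\circ_x$ (though under consistency these correspond); and the ``obvious assumption'' in the paper is precisely that the ideal points of the new system are declared to be the incomplete punctured partial germs, matching your guess.
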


\begin{proof}
(1) Obviously we have
$$
\hat{\mathfrak F}=\bigcup\{\langle\hat B,f\rangle:\,\langle\hat B,f\rangle\in\hat{\mathscr N}(\mathcal F(\hat{\mathscr B}))\}.
$$

For $\alpha_x\in\langle\hat B_1,f_1\rangle\cap\langle\hat B_2,f_2\rangle$, where $\alpha_x$ denotes a punctured partial germ at $x\in\hat X$ and $\langle\hat B_j,f_j\rangle\in\hat{\mathscr N}(\mathcal F(\hat{\mathscr B}))$ ($j=1$, $2$), there exists $\langle\hat B,f\rangle\in\hat{\mathscr N}(\mathcal F(\hat{\mathscr B}))$ such that $\alpha_x\in\langle\hat B,f\rangle\subseteq\langle\hat B_1,f_1\rangle\cap\langle\hat B_2,f_2\rangle$. This follows from that $f_1|_B=f_2|_B=:f$ for some basic punctured partial neighborhood $B\in\mathscr B(x)$ satisfying $\hat B=B\cup\{x\}\subseteq\hat B_1\cap\hat B_2$. Therefore, $\hat{\mathscr N}(\mathcal F(\hat{\mathscr B}))$ is a basis for a topology on~$\hat{\mathfrak F}$. Easily we can also show that $\hat{\mathscr N}(\mathcal F)$ is a basis for the same topology.

For $\alpha_x\in\hat{\mathfrak F}$ ($x\in\hat X$), there is $B\in\mathscr B(x)$ and $f\in\mathcal F(B)$ such that $\alpha_x=\langle f\rangle_x$. The mapping
$$
\hat p|_{\langle\hat B,f\rangle}:\ \langle\hat B,f\rangle\longrightarrow\hat B,
$$
where $\hat B=B\cup\{x\}$, is evidently a homeomorphism.

\medskip
(2) It is easy to see that
$$
\mathfrak F^\circ=\bigcup\{\langle U,f\rangle:\,\langle U,f\rangle\in\mathscr N^\circ(\mathcal F)\}.
$$

For $\alpha_x\in\langle U_1,f_1\rangle\cap\langle U_2,f_2\rangle$, where $\alpha_x$ denotes a punctured partial germ at $x\in X$ and $\langle U_j,f_j\rangle\in\mathscr N^\circ(\mathcal F)$ ($j=1$, $2$), we have $\langle f_1\rangle_x=\alpha_x=\langle f_2\rangle_x$. By the consistency of~$\mathcal F$ it follows $[f_1]^\circ_x=[f_2]^\circ_x$. Thus there is $U\in\mathscr T(x)$ satisfying $U\subseteq U_1\cap U_2$ and $f_1|_{U\setminus\{x\}}=f_2|_{U\setminus\{x\}}$. Let $f=f_1|_{U\setminus\{x\}}$. Then we have $\alpha_x=\langle f\rangle_x\in\langle U,f\rangle\subseteq\langle U_1,f_1\rangle\cap\langle U_2,f_2\rangle$. Therefore $\mathscr N^\circ(\mathcal F)$ is a basis for a topology on~$\mathfrak F^\circ$. For $\alpha\in{\mathfrak F}^\circ$ there is $f\in\overset{\scriptscriptstyle\,\,\circ}{\mathcal F}(U)$, where $U\in\mathscr T(x)$ ($x\in X$), such that $\alpha=\langle f\rangle_x$. The mapping
$$
p|_{\langle U,f\rangle}:\ \langle U,f\rangle\longrightarrow U
$$
is a homeomorphism.

\medskip
(3) It is evident that
$$
\hat{\mathfrak F}(A)=\bigcup\{\langle V,f\rangle:\,\langle V,f\rangle\in\hat{\mathscr N}(\mathcal F(\hat{\mathscr B}_A))\}.
$$

To prove $\hat{\mathscr N}(\mathcal F(\hat{\mathscr B}_A))$ is a topological basis we need to show this in three cases. We now consider the case that $\alpha_x\in\langle\hat B,f_1\rangle\cap\langle U,f_2\rangle$, where $\alpha_x$ denotes a punctured partial germ at $x\in X$, $x\in U$, $U\in\mathscr T$ and $\hat B\in\hat{\mathscr B}(x_1)$ ($x_1\in A$). If $x\in X\!\setminus\!A$ then $x\ne x_1$. Hence $x\in B\cap U$, where $B=\hat B\!\setminus\!\{x_1\}\in\mathscr B(x_1)$, and $\langle f_1\rangle_x=\alpha_x=\langle f_2\rangle_x$, which implies $[f_1]^\circ_x=[f_2]^\circ_x$ by the consistency of~$\mathcal F$. Consequently, $f_1|_{U_0\setminus\{x\}}=f_2|_{U_0\setminus\{x\}}:=f$ for some $U_0\in\mathscr T(x)$, $U_0\subseteq B\cap U$. We then have $\alpha_x\in\langle U_0,f\rangle\subseteq\langle\hat B,f_1\rangle\cap\langle U,f_2\rangle$. If $x\in A$ then $x\in\hat B\cap U$ and $\langle f_1\rangle_x=\langle f_2\rangle_x$, which implies there exists $B_0\in\mathscr B(x)$, $B_0\subseteq B\cap U$, such that $f_1|_{B_0}=f_2|_{B_0}=:f$. Therefore $\alpha_x\in\langle\hat B_0,f\rangle\subseteq\langle\hat B,f_1\rangle\cap\langle U,f_2\rangle$, where $\hat B_0=B_0\cup\{x\}\in\hat{\mathscr B}(x)$. Similarly we can show the other cases.

For $\alpha\in\hat{\mathfrak F}(A)$ there exists $\hat{\mathscr B}_A\in\hat{\mathscr B}_A(x)$ ($x\in\hat X$) and $f\in\mathcal F(B_A)$, where $B_A=\hat B_A\!\setminus\!\{x\}\in\mathscr B_A(x)$, such that $\alpha=\langle f\rangle_x$. The mapping
$$
p_{_A}|_{\langle\hat B_A,f\rangle}:\ \langle\hat B_A,f\rangle\longrightarrow\hat B_A
$$
is a homeomorphism.

\medskip
(4) For $\alpha\in\hat{\mathfrak F}$, there is $x\in\hat X$ and $f\in\mathcal F(B)$, where $B\in\mathscr B(x)$, such that $\alpha=\langle f\rangle_x$. Thus
$$
\mathscr N_f(\mathcal F(\mathscr B))(\langle f\rangle_x)\longrightarrow\alpha
$$
in topology~$\hat{\mathscr T}(\mathcal F)$ and if $\alpha\in\mathfrak F^\circ$ then the above limit also holds in topology~$\mathscr T^\circ(\mathcal F)$.

Now let $\alpha=\langle f\rangle_x\in\mathfrak F^\circ$ and assume
$$
\mathscr N_g(\mathcal F(\mathscr B))(\langle g\rangle_x)\longrightarrow\alpha
$$
in topology~$\mathscr T^\circ(\mathcal F)$, where $f\in\mathcal F(U)$, $U\in\overset{\scriptscriptstyle\circ}{\mathscr T}(x)$, $g\in\mathcal F(B)$, $B\in\mathscr B(x)$ and $x\in X$. Then there exists $V\in\mathscr B(x)$, $V\subseteq B$, such that
$$
\langle V,g|_V\rangle\subseteq\langle U,f\rangle.
$$
Hence $V\subseteq U$ and $\langle g\rangle_y=\langle f\rangle_y$ for each $y\in V$. So $\langle V,g|_V\rangle=\langle V,f|_V\rangle$. This implies that if both $\mathscr N_g(\mathcal F(\mathscr B))(\langle g\rangle_x)$ and $\mathscr N_h(\mathcal F(\mathscr B))(\langle h\rangle_x)$ converge to~$\alpha$ in topology~$\mathscr T^\circ(\mathcal F)$ then they are equivalent to one another.
Similarly we see that if both $\mathscr N_g(\mathcal F(\mathscr B))(\langle g\rangle_x)$ and $\mathscr N_h(\mathcal F(\mathscr B))(\langle h\rangle_x)$ ($x\in\hat X$) converge to $\alpha\in\hat{\mathfrak F}$ in topology~$\hat{\mathscr T}(\mathcal F)$ then they are also equivalent to one another. Assume the ideal points are just the incomplete punctured partial germs. Then $\mathfrak N(\mathcal F(\mathscr B))$ is a perfect filterbase structure system on $({\mathfrak F}^\circ,{\mathscr T}^\circ(\mathcal F))$ and $({\mathfrak F}^\circ,{\mathscr T}^\circ(\mathcal F); \hat{\mathfrak F},\hat{\mathscr T}(\mathcal F))$ is a universal topological space determined by~$\mathfrak N(\mathcal F(\mathscr B))$ with the basic (resp.\ punctured) partial neighborhood basis $\hat{\mathscr N}(\mathcal F(\hat{\mathscr B}))$ (resp.\ $\mathscr N(\mathcal F(\mathscr B))$).


Let $\alpha_a\in\mathfrak F^\circ$ ($a\in X$) and $\alpha_b\in\hat{\mathfrak F}$ ($b\in\hat X$). Then there exist $f\in\overset{\scriptscriptstyle\,\,\circ}{\mathcal F}(U)$ and $g\in\mathcal F(B)$, where $U\in{\mathscr T}(a)$ and $B\in\mathscr B(b)$, such that $\alpha_a=\langle f\rangle_a$ and $\alpha_b=\langle g\rangle_b$. It is easy to see that both
$$
\hat p|_{\langle U,f\rangle}:\ \langle U,f\rangle\longrightarrow U
$$
and
$$
\hat p|_{\langle\hat B,g\rangle}:\ \langle\hat B,g\rangle\longrightarrow\hat B
$$
are (exact) homeomorphisms, where $\hat B=B\cup\{b\}$.
\end{proof}

We call the space $({\mathfrak F}^\circ,{\mathscr T}^\circ(\mathcal F); \hat{\mathfrak F},\hat{\mathscr T}(\mathcal F))$ in the above theorem the \textit{derived universal topological space} over $(X,\mathscr T;\hat X,\hat{\mathscr T})$ by~$\mathcal F$.
\smallskip

A $T_1$~space~$(X,\mathscr T)$ is called \textit{strongly locally connected} if for any $x\in X$ and $U\in\mathscr T(x)$ there exists $V\in\mathscr T(x)$ such that $V\subseteq U$ and $V\!\setminus\!\{x\}$ is a domain (nonempty connected open set). A universal topological space $(X,\mathscr T;\hat X,\hat{\mathscr T})$ is called \textit{locally connected} if $(X,\mathscr T)$ is strongly locally connected and there exists a punctured partial neighborhood basis~$\mathscr B_1$ on~$\hat X$ satisfying every set~$B$ in~$\mathscr B_1$ is connected in~$(X,\mathscr T)$ (we say $\mathscr B_1$ is \textit{connected}). Here $\mathscr B_1$ may be different from the basic punctured partial neighborhood basis~$\mathscr B$ of~$\hat X$.

The \textit{uniqueness condition} on a presheaf~$\mathcal F$ on a universal topological space $(X,\mathscr T;\hat X,\hat{\mathscr T})$ means the following one: For every domain~$Y$ in~$(X,\mathscr T)$, given any $f$, $g\in\mathcal F(Y)$ and any $a\in\hat X$ satisfying there exists $B\in\mathscr B(a)$ with $B\subseteq Y$, the equality $\langle f\rangle_a=\langle g\rangle_a$ always implies $f=g$.

\begin{theorem}\label{theorem2.2}
Suppose $(X,\mathscr T;\hat X,\hat{\mathscr T})$ is locally connected Hausdorff universal topological space and $\mathcal F$ is a presheaf on~$(X,\mathscr T)$. If $\mathcal F$ satisfies the uniqueness condition on~$\hat X$, then both $({\mathfrak F}^\circ,{\mathscr T}^\circ(\mathcal F))$ and $(\hat{\mathfrak F},\hat{\mathscr T}(\mathcal F))$ are Hausdorff spaces, furthermore, $({\mathfrak F}^\circ,{\mathscr T}^\circ(\mathcal F); \hat{\mathfrak F},\hat{\mathscr T}(\mathcal F))$ is a Hausdorff universal topological space.
\end{theorem}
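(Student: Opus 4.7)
The plan is to imitate the classical argument that the \'etale space of a sheaf over a Hausdorff base is Hausdorff precisely when the sheaf satisfies an identity principle, and to adapt it to the partial/essential-topology setting of Theorem~2.1. I would first establish Hausdorffness of $(\hat{\mathfrak F},\hat{\mathscr T}(\mathcal F))$ by separating two distinct points $\alpha_x,\beta_y$ into two cases.

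In the case $x\ne y$, I would use that $(\hat X,\hat{\mathscr T})$ is Hausdorff (a consequence of the Hausdorff hypothesis on the universal topological space $(X,\mathscr T;\hat X,\hat{\mathscr T})$) to pick disjoint partial open neighborhoods $\hat U_x\in\hat{\mathscr B}(x)$ and $\hat U_y\in\hat{\mathscr B}(y)$, shrunk so that representatives $f\in\mathcal F(\hat U_x^\circ)$ of $\alpha_x$ and $g\in\mathcal F(\hat U_y^\circ)$ of $\beta_y$ are available; then the basic open sets $\langle\hat U_x,f\rangle$ and $\langle\hat U_y,g\rangle$ are disjoint because under the projection $\hat p$ their images lie in the disjoint sets $\hat U_x,\hat U_y$.

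In the case $x=y$, I would write $\alpha_x=\langle f\rangle_x$, $\beta_x=\langle g\rangle_x$ and use local connectedness to pick a connected $B\in\mathscr B_1(x)$ inside the common domain of $f$ and $g$; set $\hat B=B\cup\{x\}$ and consider $\langle\hat B,f\rangle$, $\langle\hat B,g\rangle$. If these intersect at a germ sitting over some $z\in\hat B$, then either $z=x$ (forcing $\alpha_x=\beta_x$ at once) or $z\in B\subseteq X$; in the latter case $\langle f\rangle_z=\langle g\rangle_z$, and one invokes the uniqueness condition on the domain $B$. Its side condition---that some $B'\in\mathscr B(z)$ lies in $B$---is immediate from the convergence of $\mathcal B(z)$ to $z$ together with the openness of $B$ in $(X,\mathscr T)$. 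This forces $f=g$ on all of~$B$, hence $\langle f\rangle_x=\langle g\rangle_x$, contradicting $\alpha_x\ne\beta_x$.

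Hausdorffness of $(\mathfrak F^\circ,\mathscr T^\circ(\mathcal F))$ is obtained by the same template applied to basic open sets $\langle U,f\rangle$, where strong local connectedness supplies a connected open $U\ni x$ with $U\setminus\{x\}$ a domain; this domain is the one to which the uniqueness condition is applied in the subcase $z\ne x$. For the final claim that $(\mathfrak F^\circ,\mathscr T^\circ(\mathcal F);\hat{\mathfrak F},\hat{\mathscr T}(\mathcal F))$ is Hausdorff \emph{as a universal topological space}, I would verify that $(\hat{\mathfrak F},\check{\mathscr T}(\mathcal F))$ is Hausdorff by splitting on whether each of $\alpha,\beta$ lies in $\mathfrak F^\circ$ or in the ideal-point set $\hat{\mathfrak F}\setminus\mathfrak F^\circ$, and in each subcase combining the two Hausdorffness assertions just established with the observation that a partial open set around an ideal point is automatically an essential open set there. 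The main obstacle is the same-base-point case: converting the pointwise inequality $\langle f\rangle_x\ne\langle g\rangle_x$ into disjointness of open sets on the \'etale-like space requires ruling out coincidence of germs at every nearby point, which is exactly what the uniqueness condition together with local connectedness delivers, the only subtlety being the filter-base side condition, handled by the convergence axiom of~$\mathfrak B$.
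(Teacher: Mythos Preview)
Your proposal is correct and follows essentially the same approach as the paper: separate two distinct germs according to whether they lie over the same base point, use Hausdorffness of the base in the unequal case, and in the equal case choose a connected punctured neighborhood and invoke the uniqueness condition to rule out any coincidence of germs over that neighborhood. One small point you omit that the paper mentions explicitly: before running the separation argument, one should note that local connectedness together with the uniqueness condition implies the presheaf is consistent on~$\hat X$, so that Theorem~2.1(4) applies and $({\mathfrak F}^\circ,{\mathscr T}^\circ(\mathcal F);\hat{\mathfrak F},\hat{\mathscr T}(\mathcal F))$ is indeed a universal topological space to begin with.
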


\noindent
\textit{Proof.} \,Since the local connectedness of the Hausdorff space $(X,\mathscr T;\hat X,\hat{\mathscr T})$ and the uniqueness condition on~$\mathcal F$ imply that
$\mathcal F$ is consistent on~$\hat X$, by Theorem~\ref{theorem2.1} we know that $({\mathfrak F}^\circ,{\mathscr T}^\circ(\mathcal F); \hat{\mathfrak F},\hat{\mathscr T}(\mathcal F))$ is a universal topological space. In the following we prove the spaces are Hausdorff.

At first, suppose $\langle f\rangle_x\ne\langle g\rangle_y$, where $x$, $y\in\hat X$, $f\in\mathcal F(B_1)$, $g\in\mathcal F(B_2)$, $B_1\in\mathscr B(x)$ and $B_2\in\mathscr B(y)$ ($\mathscr B$ is a punctured partial neighborhood basis). If $x\ne y$, then there exist $\hat U_1\in\hat{\mathscr B}(x)$ and $\hat U_2\in\hat{\mathscr B}(y)$ such that $\hat U_1\subseteq B_1\cup\{x\}$, $\hat U_2\subseteq B_2\cup\{y\}$ and $\hat U_1\cap\hat U_2=\emptyset$ ($\hat{\mathscr B}$ is the partial neighborhood basis corresponding to~$\mathscr B$). Clearly we have
$$
\langle\hat U_1,f\rangle\cap\langle\hat U_2,g\rangle=\emptyset.
$$
If $x=y$, then by the local connectedness of $(X,\mathscr T;\hat X,\hat{\mathscr T})$ we may assume $\mathscr B$ is connected and there exists $B\in\mathscr B(x)$ such that $B\subseteq B_1\cap B_2$. By the uniqueness condition we have
$$
\langle\hat B,f|_B\rangle\cap\langle\hat B,g|_B\rangle=\emptyset
$$
($\hat B=B\cup\{x\}$), since otherwise it follows that there exists $a\in\hat B$ such that $\langle f\rangle_a=\langle g\rangle_a$, which implies $f|_B=g|_B$, so $\langle f\rangle_x=\langle g\rangle_y$ ($x=y$), a contradiction. By the reasoning above we see that $(\hat{\mathfrak F},\hat{\mathscr T}(\mathcal F))$ is Hausdorff.

Next, suppose $\langle f\rangle_x\ne\langle g\rangle_y$, where $x$, $y\in X$, $f\in\mathcal F(U_1\!\setminus\!\{x\})$ ($U_1\in\mathscr T(x)$) and $g\in\mathcal F(U_2\!\setminus\!\{y\})$ ($U_2\in\mathscr T(y)$). If $x\ne y$, then there exist $V_1\in\mathscr T(x)$ and $V_2\in\mathscr T(y)$ such that $V_1\subseteq U_1$, $V_2\subseteq U_2$ and $V_1\cap V_2=\emptyset$. Therefore
$$
\langle V_1,f|_{V_1\setminus\{x\}}\rangle\cap\langle V_2,g|_{V_2\setminus\{y\}}\rangle=\emptyset.
$$
If $x=y$, then by the local connectedness of $(X,\mathscr T;\hat X,\hat{\mathscr T})$, there exists $V\in\mathscr T(x)$ such that $V\subseteq V_1\cap V_2$ and $V\!\setminus\!\{x\}\ne\emptyset$ is a domain in $(X,\mathscr T)$. By the uniqueness condition we have
$$
\langle V,f|_{V\setminus\{x\}}\rangle\cap\langle V,g|_{V\setminus\{x\}}\rangle=\emptyset.
$$
This follows from that otherwise there exists $a\in V$ such that $\langle f\rangle_a=\langle g\rangle_a$, which implies $f|_{V\setminus\{x\}}=g|_{V\setminus\{x\}}$, hence $\langle f\rangle_x=\langle g\rangle_y$ ($x=y$), a contradiction. Consequently, $({\mathfrak F}^\circ,{\mathscr T}^\circ(\mathcal F))$ is Hausdorff.

To prove $({\mathfrak F}^\circ,{\mathscr T}^\circ(\mathcal F); \hat{\mathfrak F},\hat{\mathscr T}(\mathcal F))$ is Hausdorff, we assume $x\in X$, $y\in\hat X$, $x\ne y$ and $\langle f\rangle_x\ne\langle g\rangle_y$, where $f\in\mathcal F(U)$ ($U\in\overset{\scriptscriptstyle\circ}{\mathscr T}(x)$) and $g\in\mathcal F(B)$ ($B\in\mathscr B(y)$, $\mathscr B$ is a connected punctured partial neighborhood basis). Then there exist $U_1\in{\mathscr T}(x)$ and $\hat B_1\in\hat{\mathscr B}(y)$ such that $U_1\subseteq U\cup\{x\}$, $\hat B_1\subseteq B\cup\{y\}$ and $U_1\cap\hat B_1=\emptyset$. Thus we have
$$
\langle U_1,f\rangle\cap\langle\hat B_1,g\rangle=\emptyset. \eqno \square
$$

\smallskip
Recall $\overset{\scriptscriptstyle\circ}{\mathfrak F}$ is the disjoint union of all the punctured stalks over~$X$. Denote
$$
[U,f]^\circ:=\{[f]^\circ_x:\,x\in U\}
$$
for $U\in\mathscr T$ and $f\in\overset{\scriptscriptstyle\,\,\circ}{\mathcal F}(U)$, and define
$$
\overset{\scriptscriptstyle\circ}{\mathscr N}(\mathcal F):=\{[U,f]^\circ:\,U\in\mathscr T,\ f\in\overset{\scriptscriptstyle\,\,\circ}{\mathcal F}(U)\}.
$$
As a special case of Theorems~\ref{theorem2.1} and~\ref{theorem2.2} (refer to Remark~\ref{remark1}) we have 

\begin{theorem}\label{theorem2.3}
Suppose $(X,\mathscr T)$ is a strongly locally connected Hausdorff topological space and $\mathcal F$ is a presheaf on~$(X,\mathscr T)$ which satisfies the uniqueness condition. Then $\overset{\scriptscriptstyle\circ}{\mathscr N}(\mathcal F)$ is a basis for a topology on~$\overset{\scriptscriptstyle\circ}{\mathfrak F}$, denoted~$\overset{\scriptscriptstyle\circ}{\mathscr T}(\mathcal F)$. Moreover, $(\overset{\scriptscriptstyle\circ}{\mathfrak F},\overset{\scriptscriptstyle\circ}{\mathscr T}(\mathcal F))$ is a Hausdorff space and the projection
$$
\overset{\scriptscriptstyle\circ}p:\ (\overset{\scriptscriptstyle\circ}{\mathfrak F},\overset{\scriptscriptstyle\circ}{\mathscr T}(\mathcal F))\longrightarrow(X,{\mathscr T})
$$
is a local homeomorphism.  \hfill $\square$
\end{theorem}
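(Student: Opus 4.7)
The plan is to derive Theorem~\ref{theorem2.3} as the special case of Theorems~\ref{theorem2.1}(2) and~\ref{theorem2.2} in which $(X,\mathscr T)$ is given a trivial universal topological space structure — no ideal points, and a basic partial neighborhood basis at each $x$ that is already a basis for $\mathscr T(x)$. Concretely, for each $x\in X$ I would set $\mathcal B(x):=\{U\setminus\{x\}:U\in\mathscr T(x)\}$ and $\mathfrak B:=\{\mathcal B(x):x\in X\}$. Since $(X,\mathscr T)$ is Hausdorff and hence $T_1$, each $U\setminus\{x\}$ is open, each $\mathcal B(x)$ is a filter base, $\mathcal B(x)\to x$, and by Hausdorffness $\mathcal B(x)$ converges to no other point; stipulating that no $\mathcal B(x)$ converges to an ideal point and noting that $x\notin B$ for every $B\in\mathcal B(x)$, $\mathfrak B$ is a perfect filterbase structure system with $\mathtt I=\emptyset$. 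Consequently $\hat X=X$, $\hat{\mathscr T}=\check{\mathscr T}=\mathscr T$, and $\hat{\mathscr B}(x)=\{(U\setminus\{x\})\cup\{x\}:U\in\mathscr T(x)\}=\mathscr T(x)$ is itself a basis for $\mathscr T(x)$.

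By Remark~\ref{remark1}, the punctured partial germ and the punctured germ at each $x\in X$ then coincide, so every punctured partial germ is complete and $\hat{\mathcal F}_x=\mathcal F^\circ_x=\overset{\scriptscriptstyle\,\,\circ}{\mathcal F}_x$. Taking disjoint unions over $X$ gives $\mathfrak F^\circ=\overset{\scriptscriptstyle\circ}{\mathfrak F}$, and under the identification $\langle f\rangle_x=[f]^\circ_x$ one has $\langle U,f\rangle=[U,f]^\circ$ for every open $U\subseteq X$ and every $f\in\overset{\scriptscriptstyle\,\,\circ}{\mathcal F}(U)$, so the bases $\mathscr N^\circ(\mathcal F)$ and $\overset{\scriptscriptstyle\circ}{\mathscr N}(\mathcal F)$ coincide set-theoretically. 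The projection $p$ of Theorem~\ref{theorem2.1}(2) is precisely the projection $\overset{\scriptscriptstyle\circ}p$ appearing in the statement.

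Next I would verify the hypotheses of Theorems~\ref{theorem2.1}(2) and~\ref{theorem2.2}. The space $(X,\mathscr T)$ is $T_1$, and strong local connectedness provides the family $\{V\setminus\{x\}:x\in X,\ V\in\mathscr T(x),\ V\setminus\{x\}\text{ is a domain}\}$ as a connected punctured partial neighborhood basis, so the universal topological space $(X,\mathscr T;X,\mathscr T)$ is locally connected in the sense required by Theorem~\ref{theorem2.2}; Hausdorffness of $(\hat X,\check{\mathscr T})=(X,\mathscr T)$ is immediate. As in the opening of the proof of Theorem~\ref{theorem2.2}, the uniqueness condition together with local connectedness forces consistency of $\mathcal F$: if two punctured partial germs at $x$ agree, then their full representatives agree on a connected punctured neighborhood of $x$, which by uniqueness makes the corresponding full germs equal.

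Applying Theorem~\ref{theorem2.1}(2) now yields that $\overset{\scriptscriptstyle\circ}{\mathscr N}(\mathcal F)$ is a basis for a topology $\overset{\scriptscriptstyle\circ}{\mathscr T}(\mathcal F)$ on $\overset{\scriptscriptstyle\circ}{\mathfrak F}$ and that $\overset{\scriptscriptstyle\circ}p$ is a local homeomorphism; applying Theorem~\ref{theorem2.2} then yields that $(\overset{\scriptscriptstyle\circ}{\mathfrak F},\overset{\scriptscriptstyle\circ}{\mathscr T}(\mathcal F))$ is Hausdorff. The main obstacle I expect is not conceptual but notational bookkeeping: one must verify that $\mathfrak B$ genuinely satisfies every clause in the definition of a perfect filterbase structure system (in particular $x\notin B$ for $B\in\mathcal B(x)$, which is precisely why $\{x\}$ is removed and why the $T_1$ hypothesis enters), and that under the identification forced by Remark~\ref{remark1} the bases, the stalks, and the projections of Theorems~\ref{theorem2.1}(2) and~\ref{theorem2.2} translate exactly into the objects named in Theorem~\ref{theorem2.3}.
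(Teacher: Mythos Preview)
Your proposal is correct and is exactly the approach the paper takes: the paper states Theorem~\ref{theorem2.3} ``as a special case of Theorems~\ref{theorem2.1} and~\ref{theorem2.2} (refer to Remark~\ref{remark1})'' without further elaboration, and you have simply supplied the details of that specialization (the trivial filterbase structure $\mathcal B(x)=\{U\setminus\{x\}:U\in\mathscr T(x)\}$ with $\mathtt I=\emptyset$, the identification of germs via Remark~\ref{remark1}, and the matching of the two uniqueness conditions). One small point worth noting explicitly is that strong local connectedness guarantees each $U\setminus\{x\}$ is nonempty (no isolated points), so that each $\mathcal B(x)$ is genuinely a filter base; otherwise your verification of the perfect filterbase structure system axioms is complete.
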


Here the \textit{uniqueness condition} on a presheaf~$\mathcal F$ on a $T_1$~space $(X,\mathscr T)$ means: For every domain~$Y$ in~$X$, the equality $[f]^\circ_a=[g]^\circ_a$, where $f$, $g\in\mathcal F(Y)$ and $a$ is any point in~$X$ satisfying $Y\!\setminus\!\{a\}$ is a punctured neighborhood of~$a$, always implies $f=g$.

We can also directly prove Theorem~\ref{theorem2.3} similarly to~\cite[Theorems~(6.8) and~(6.10)]{Fo}.

\subsection{A porous universal topological space}

Suppose $X$ is a topological space. Let $U$ be an open set and $E$ a set in~$X$. If for any nonempty domain $D\subseteq U$, the interior of $D\setminus E$ is a nonempty subdomain of~$D$ and for any open subset~$G$ of~$U$ we have $G\subseteq\overline{G\setminus\!E}$ (the closure of $G\setminus\!E$), then we say that $E$ is \textit{quasi-discrete} in~$U$ and $U\setminus E$ is \textit{porous} corresponding to~$U$. Here we also attach the following conditions: (1) if $E_1$, $E_2$ and~$E$ are quasi-discrete in~$U$ then $E_1\cup E_2$ and all subsets of~$E$ are quasi-discrete in~$U$; (2) every discrete subset of~$X$ is quasi-discrete.

If $(X,\mathscr T;\hat{X},\hat{\mathscr T})$ is a Hausdorff universal topological space (i.e.\ $(\hat{X},\check{\mathscr T})$ is Hausdorff) and $(X,\mathscr T)$ is $T_3$ (Hausdorff and regular), then we say $\hat X$ is $T_3$. Suppose $(X,\mathscr T;\hat{X},\hat{\mathscr T})$ is a $T_3$ universal topological space with a basic punctured partial neighborhood basis~$\mathscr B$ determined by a perfect filterbase structure system~$\mathfrak B$. 
For $\mathcal B\in\mathfrak B$, denote
$$
\mathcal B^{^{\mathrm{po}}}:=\{B\!\setminus\!E:\,B\in\mathcal B \text{ and } E \text{ is closed and quasi-discrete in } B\}
$$
and
$$
\mathfrak B^{^{\mathrm{po}}}:=\{\mathcal B^{^{\mathrm{po}}}:\,\mathcal B\in\mathfrak B\}.
$$
Then easily we see that $\mathcal B^{^{\mathrm{po}}}$ is a filter base, which we call a \textit{porous filter base} corresponding to~$\mathcal B$ or~$\mathfrak B$. Assume $\mathcal B^{^{\mathrm{po}}}\to x$ precisely if $\mathcal B\to x$ for an ideal point~$x$ in~$\hat X$. Then $\mathfrak B^{^{\mathrm{po}}}$ is a perfect filterbase structure system on~$(X,\mathscr T)$, which we call a \textit{perfect porous filterbase structure system} corresponding to~$\mathfrak B$. Let $\hat{\mathscr B}^{^{\mathrm{po}}}(x)$ (resp.\ $\mathscr B^{^{\mathrm{po}}}(x)$) be a basic (resp.\ punctured) partial neighborhood basis at~$x\in\hat X$ corresponding to~$\mathfrak B^{^{\mathrm{po}}}$. Let
$$
\hat{\mathscr B}^{^{\mathrm{po}}}:=\bigcup_{x\in\hat X}\hat{\mathscr B}^{^{\mathrm{po}}}(x)
$$
and
$$
{\mathscr B}^{^{\mathrm{po}}}:=\bigcup_{x\in\hat X}{\mathscr B}^{^{\mathrm{po}}}(x),
$$
which are called the \textit{basic porous partial neighborhood basis} and \textit{basic punctured porous partial neighborhood basis} for $(X,\mathscr T;\hat{X},\hat{\mathscr T})$, respectively.

Suppose $\hat{\mathscr T}^{^{\mathrm{po}}}$ is the filterbase topology determined by~$\mathfrak B^{^{\mathrm{po}}}$. We call $\hat{\mathscr T}^{^{\mathrm{po}}}$ the \textit{porous filterbase topology} or \textit{porous partial topology} on~$\hat X$ determined by~$\mathfrak B$ and $(\hat X,\hat{\mathscr T}^{^{\mathrm{po}}})$ the \textit{porous partial toplogical space} corresponding to $(\hat X,\hat{\mathscr T})$. Let $\hat{\mathscr T}^{^{\circ\mathrm{p}}}(x)$ denote the set of all punctured open partial neighborhoods of~$x\in\hat X$ in~$(\hat X,\hat{\mathscr T}^{^{\mathrm{po}}})$ and let
$$
\hat{\mathscr T}^{^{\circ\mathrm{p}}}:=\bigcup_{x\in\hat X}\hat{\mathscr T}^{^{\circ\mathrm{p}}}(x).
$$
We call $\hat{\mathscr T}^{^{\circ\mathrm{p}}}$ (resp.\ $\hat{\mathscr T}^{^{\circ\mathrm{p}}}(x)$) the \textit{punctured porous open partial neighborhood system} (resp.\ at~$x$) on~$\hat{X}$.

Similarly by the perfect porous filterbase structure system
\begin{equation}\label{2.2}
\mathfrak T^{^{\mathrm{po}}}:=\{\mathcal B^{^{\mathrm{po}}}:\,\mathcal B\in\overset{\scriptscriptstyle\circ}{\mathfrak T}\}
\end{equation}
in~$X$, where $\overset{\scriptscriptstyle\circ}{\mathfrak T}:=\{\overset{\scriptscriptstyle\circ}{\mathscr T}(x):\,x\in X\}$ ($\overset{\scriptscriptstyle\circ}{\mathscr T}(x)$ is the set of all punctured open neighborhoods of $x\in X$ in $(X,\mathscr T)$), we obtain a filterbase topology, denoted~$\overset{\,\scriptscriptstyle\mathrm{po}}{\mathscr T}$, which is called the \textit{porous topology} on~$X$. The space $(X,\overset{\,\scriptscriptstyle\mathrm{po}}{\mathscr T})$ is called the \textit{porous topological space} corresponding to $(X,\mathscr T)$. Let $\overset{\scriptscriptstyle\circ\mathrm{p}}{\mathscr T}(x)$ denote the set of all punctured open neighborhoods of $x\in X$ in~$(X,\overset{\,\scriptscriptstyle\mathrm{po}}{\mathscr T})$ and let
$$
\overset{\scriptscriptstyle\circ\mathrm{p}}{\mathscr T}:=\bigcup_{x\in\hat X}\overset{\scriptscriptstyle\circ\mathrm{p}}{\mathscr T}(x).
$$
We call $\overset{\scriptscriptstyle\circ\mathrm{p}}{\mathscr T}$ (resp.\ $\overset{\scriptscriptstyle\circ\mathrm{p}}{\mathscr T}(x)$) the \textit{punctured porous open neighborhood system} (resp.\ at~$x$) on $(X,\mathscr T)$.

For a partial point set~$A$ ($\mathtt I\subseteq A\subseteq\hat X$), let $\hat{\mathscr B}^{^{\mathrm{po}}}_A(x)$ (resp.\ $\mathscr B^{^{\mathrm{po}}}_A(x)$) be a (resp.\ punctured) porous open neighborhood basis at~$x$ in $(X,\mathscr T)$ for $x\in X\!\setminus\!A$ and $\hat{\mathscr B}^{^{\mathrm{po}}}_A(x):=\hat{\mathscr B}^{^{\mathrm{po}}}(x)$ (resp.\ $\mathscr B^{^{\mathrm{po}}}_A(x):=\mathscr B^{^{\mathrm{po}}}(x)$) for $x\in A$. Then $\hat{\mathscr B}^{^{\mathrm{po}}}_A(x)$ and~$\mathscr B^{^{\mathrm{po}}}_A(x)$ are filter bases. Let
$$
\hat{\mathscr B}^{^{\mathrm{po}}}_A:=\bigcup_{x\in\hat X}\hat{\mathscr B}^{^{\mathrm{po}}}_A(x) \quad \text{and} \quad \mathscr B^{^{\mathrm{po}}}_A:=\bigcup_{x\in\hat X}\mathscr B^{^{\mathrm{po}}}_A(x).
$$
Then $\hat{\mathscr B}^{^{\mathrm{po}}}_A$ is a basis for some topology on (set)~$\hat X$. The topology on~$\hat X$ determined by~$\hat{\mathscr B}^{^{\mathrm{po}}}_A$ is called the \textit{mixed porous topology} on~$\hat X$ with the partial point set~$A$ or the \textit{$A$-mixed porous topology} on~$\hat X$ determined by~$\hat{\mathscr B}_A$, denoted~$\hat{\mathscr T}^{^{\mathrm{po}}}(A)$. $\mathscr B^{^{\mathrm{po}}}_A$ (resp.\ $\mathscr B^{^{\mathrm{po}}}_A(x)$) is called an \textit{$A$-mixed punctured porous neighborhood basis} (resp.\ at~$x$) on~$\hat X$. Denote $\check{\mathscr T}^{^{\mathrm{po}}}:=\hat{\mathscr T}^{^{\mathrm{po}}}(\mathtt I)$, which is called the \textit{porous essential topology} on~$\hat X$. We can also define $\check{\mathscr B}^{^{\mathrm{po}}}(x)$ ($x\in\hat X$) and $\check{\mathscr B}^{^{\mathrm{po}}}$ in an obvious way to get the \textit{porous essential topology}~$\check{\mathscr T}^{^{\mathrm{po}}}$.

Since $\hat{X}$ is assumed to be~$T_3$, we easily see that $\mathfrak B^{^{\mathrm{po}}}$ is a perfect filterbase structure system on~$(X,\overset{\,\scriptscriptstyle\mathrm{po}}{\mathscr T})$ and $(X,\overset{\,\scriptscriptstyle\mathrm{po}}{\mathscr T};\hat{X},\hat{\mathscr T}^{^{\mathrm{po}}})$ is a universal topological space determined by~$\mathfrak B^{^{\mathrm{po}}}$ (with a basic (resp.\ punctured) open neighborhood basis~$\hat{\mathscr B}^{^{\mathrm{po}}}$ (resp.~${\mathscr B}^{^{\mathrm{po}}}$)), which we call the \textit{porous universal topological space} corresponding to $(X,\mathscr T;\hat{X},\hat{\mathscr T})$. Clearly, $\hat{\mathscr T}^{^{\mathrm{po}}}(A)$ is a mixed topology of $(X,\overset{\,\scriptscriptstyle\mathrm{po}}{\mathscr T};\hat{X},\hat{\mathscr T}^{^{\mathrm{po}}})$. Specially, $\check{\mathscr T}^{^{\mathrm{po}}}$ is the essential topology of $(X,\overset{\,\scriptscriptstyle\mathrm{po}}{\mathscr T};\hat{X},\hat{\mathscr T}^{^{\mathrm{po}}})$.

Suppose $\mathcal F$ is a presheaf of some algebraic system on~$(X,\mathscr T)$. For $U\in\mathscr T$ let
$$
\overset{\scriptscriptstyle\,\,\mathrm{po}}{\mathcal F}(U):=\{f\in\mathcal F(U\!\setminus\!E):\, E \text{ is a quasi-discrete closed subset of~}U\}.
$$
Let $\hat{\mathcal F}^{^{\mathrm{po}}}_x$ be the punctured partial stalk of~$\mathcal F$ at $x\in\hat X$ in $(\hat{X},\hat{\mathscr T}^{^{\mathrm{po}}})$, called the \textit{punctured porous partial stalk} of~$\mathcal F$ at $x\in\hat X$ in $(\hat{X},\hat{\mathscr T})$. Define
$$
\hat{\mathfrak F}^{^{\mathrm{po}}}:=\bigcup_{x\in\hat X}\hat{\mathcal F}^{^{\mathrm{po}}}_x.
$$

Let $g\in\overset{\scriptscriptstyle\,\,\mathrm{po}}{\mathcal F}(V)$ ($V\in\hat{\mathscr T}^\circ(x)$, $x\in\hat X$) (i.e.\ $\,g\in{\mathcal F}(\hat{\mathscr T}^{^{\circ\mathrm{p}}}(x))$, $\hat{\mathscr T}^{^{\circ\mathrm{p}}}(x)$ is the punctured porous open partial neighborhood system at~$x$). We call the punctured partial germ of~$g$ at~$x$ in $(\hat{X},\hat{\mathscr T}^{^{\mathrm{po}}})$ the \textit{punctured porous partial germ} of~$g$ at~$x$ in $(\hat{X},\hat{\mathscr T})$, denoted $\langle g\rangle^{^{\mathrm{po}}}_x$. For $U\in\mathscr T$, $x\in U$ and $f\in\overset{\scriptscriptstyle\,\,\mathrm{po}}{\mathcal F}(U)$ denote $\langle f\rangle^{^{\mathrm{po}}}_x:=\langle f|_V\rangle^{^{\mathrm{po}}}_x$, where $V\in\hat{\mathscr T}^{^{\circ\mathrm{p}}}(x)$, $V\subseteq U\setminus E$ for some quasi-discrete closed subset~$E$ of~$U$ and $f\in\mathcal F(U\!\setminus\! E)$, and denote
$$
\langle U,f\rangle^{^{\mathrm{po}}}:=\{\langle f\rangle^{^{\mathrm{po}}}_x:\,x\in U\}.
$$
For nonempty $\hat{U}\in\hat{\mathscr T}$ with body~$\hat{U}^\circ$ and $f\in\overset{\scriptscriptstyle\,\,\mathrm{po}}{\mathcal F}(\hat{U}^\circ)$ denote
$$
\langle\hat{U},f\rangle^{^{\mathrm{po}}}:=\{\langle f\rangle^{^{\mathrm{po}}}_x:\,x\in\hat{U}\}.
$$

Define
$$
\hat{\mathscr N}^{^{\mathrm{po}}}(\mathcal F):=\{\langle\hat U,f\rangle^{^{\mathrm{po}}}:\,\emptyset\ne\hat U\in\hat{\mathscr T},\ f\in\overset{\scriptscriptstyle\,\,\mathrm{po}}{\mathcal F}(\hat U^\circ)\}
$$
and
$$
\hat{\mathscr N}^{^{\mathrm{po}}}(\mathcal F(\hat{\mathscr B})):=\{\langle\hat B,f\rangle^{^{\mathrm{po}}}:\,\hat B\in\hat{\mathscr B},\ f\in\overset{\scriptscriptstyle\,\,\mathrm{po}}{\mathcal F}(\hat B^\circ)\},
$$
where $\hat U^\circ\in\mathscr T$ and $\hat B^\circ\in\mathscr B$ are the bodies of~$\hat U$ and~$\hat B$, respectively. For $f\in\overset{\scriptscriptstyle\,\,\mathrm{po}}{\mathcal F}(B)$, where $B\in\mathscr B(x)$ ($x\in\hat X$), define
$$
\hat{\mathscr N}^{^{\mathrm{po}}}_f(\mathcal F(\hat{\mathscr B}))(\langle f\rangle^{^{\mathrm{po}}}_x):=\{\langle\hat U,f|_{\hat U\setminus\{x\}}\rangle^{^{\mathrm{po}}}:\,\hat U\in\hat{\mathscr B}(x),\ \hat U\subseteq\hat B\},
$$
where $\hat B=B\cup\{x\}\in\hat{\mathscr B}(x)$. Then $\hat{\mathscr N}^{^{\mathrm{po}}}_f(\mathcal F(\hat{\mathscr B}))(\langle f\rangle^{^{\mathrm{po}}}_x)$ is a filter base and
$$
\hat{\mathscr N}^{^{\mathrm{po}}}(\mathcal F(\hat{\mathscr B}))=\bigcup\{\hat{\mathscr N}^{^{\mathrm{po}}}_f(\mathcal F(\hat{\mathscr B}))(\langle f\rangle^{^{\mathrm{po}}}_x):\,f\in\overset{\scriptscriptstyle\,\,\mathrm{po}}{\mathcal F}(B),\ B\in\mathscr B(x),\ x\in\hat X\}.
$$
Define
$$
\mathscr N^{^{\mathrm{po}}}(\mathcal F):=\{\langle U,f\rangle^{^{\mathrm{po}}}:\,\emptyset\ne U\in\mathscr T,\ f\in\overset{\scriptscriptstyle\,\,\mathrm{po}}{\mathcal F}(U)\}
$$
and
$$
\mathscr N^{^{\mathrm{po}}}(\mathcal F(\mathscr B)):=\{\langle B,f\rangle^{^{\mathrm{po}}}:\, B\in\mathscr B,\ f\in\overset{\scriptscriptstyle\,\,\mathrm{po}}{\mathcal F}(B)\}.
$$
For $f\in\overset{\scriptscriptstyle\,\,\mathrm{po}}{\mathcal F}(B)$, where $B\in\mathscr B(x)$ ($x\in\hat X$), define
$$
\mathscr N^{^{\mathrm{po}}}_f(\mathcal F(\mathscr B))(\langle f\rangle^{^{\mathrm{po}}}_x):=\{\langle U,f|_ U\rangle^{^{\mathrm{po}}}:\, U\in\mathscr B(x),\ U\subseteq B\}
$$
and
$$
\mathfrak N^{^{\mathrm{po}}}(\mathcal F(\mathscr B)):=\{\mathscr N^{^{\mathrm{po}}}_f(\mathcal F(\mathscr B))(\langle f\rangle^{^{\mathrm{po}}}_x):\,f\in\overset{\scriptscriptstyle\,\,\mathrm{po}}{\mathcal F}(B),\ B\in\mathscr B(x),\ x\in\hat X\}.
$$
Then $\mathscr N^{^{\mathrm{po}}}_f(\mathcal F(\mathscr B))(\langle f\rangle^{^{\mathrm{po}}}_x)$ is a filter base and
$$
\mathscr N^{^{\mathrm{po}}}(\mathcal F(\mathscr B))=\bigcup\{\mathcal N:\,\mathcal N\in\mathfrak N^{^{\mathrm{po}}}(\mathcal F(\mathscr B))\}.
$$

Suppose $\mathcal F$ is a presheaf on a $T_1$ space $(X,\mathscr T)$. Similarly to the punctured porous partial stalk and the punctured porous partial germ we can define the \textit{punctured porous stalk} of~$\mathcal F$ at $x\in X$ in $(X,\mathscr T)$, denoted~$\overset{\,\,\scriptscriptstyle\circ\mathrm{p}}{\mathcal F}_x$, and the \textit{punctured porous germ} of~$f\in\overset{\scriptscriptstyle\,\,\mathrm{po}}{\mathcal F}(U)$ ($U\in\mathscr T$) at $x\in X$, denoted~$[f]^{^{\circ\mathrm{p}}}$, which are the punctured porous partial stalk and the punctured porous partial germ corresponding to the perfect porous filterbase structure system~$\mathfrak T^{^{\mathrm{po}}}$ (see~(\ref{2.2})), respectively. Let
$$
\overset{\scriptscriptstyle\circ\mathrm{p}}{\mathfrak F}:=\bigcup_{x\in X} \overset{\,\scriptscriptstyle\circ\mathrm{p}}{\mathcal F}_x.
$$
If for $g\in\mathcal F(\hat{\mathscr T}^{^{\mathrm{po}}}(x))$ ($x\in X$) there exists $f\in\mathcal F(\overset{\scriptscriptstyle\circ\mathrm{p}}{\mathscr T}(x))$ such that $\langle f\rangle^{^{\mathrm{po}}}_x=\langle g\rangle^{^{\mathrm{po}}}_x$ ($\overset{\scriptscriptstyle\circ\mathrm{p}}{\mathscr T}(x)$ is the punctured porous open neighborhood system at~$x$), then $x$ is called a \textit{porously complete point} of~$\langle g\rangle^{^{\mathrm{po}}}_x$ or~$g$, $f$ a \textit{porously complete element} corresponding to~$g$ at~$x$, $\langle g\rangle^{^{\mathrm{po}}}_x$ \textit{porously complete} and $g$ \textit{porously complete} at~$x$ (here we may also use the terminology \textit{unbranched} to replace ``complete''). In this case, we say that $\langle g\rangle^{^{\mathrm{po}}}_x$ and $[f]^{^{\circ\mathrm{p}}}_x$ are \textit{equivalent}, denoted $\langle g\rangle^{^{\mathrm{po}}}_x\thicksim[f]^{^{\circ\mathrm{p}}}_x$.

Suppose $g_1$, $g_2\in\mathcal F(\hat{\mathscr T}^{^{\mathrm{po}}}(x))$ and $x\in X$ is a common porously complete point of~$g_1$ and~$g_2$. Let $f_1$ and~$f_2$ be porously complete elements corresponding to $g_1$ and~$g_2$ at~$x$, respectively. If $\langle g_1\rangle^{^{\mathrm{po}}}_x=\langle g_2\rangle^{^{\mathrm{po}}}_x$ always implies $[f_1]^{^{\circ\mathrm{p}}}_x=[f_2]^{^{\circ\mathrm{p}}}_x$, then the presheaf~$\mathcal F$ is called \textit{porously consistent} at~$x$ (on~$\hat X$). If $\mathcal F$ is porously consistent at all the porously complete points then we say that $\mathcal F$ is \textit{porously consistent} (on~$\hat X$).

Let
$$
\hat p:\ \hat{\mathfrak F}^{^{\mathrm{po}}}\longrightarrow\hat X \quad\text{and}\quad \overset{\scriptscriptstyle\circ}p:\ \overset{\scriptscriptstyle\circ\mathrm{p}}{\mathfrak F}\longrightarrow X
$$
be the projections, i.e.\ $\hat p(\langle g\rangle^{^{\mathrm{po}}}_x)=x$ for $\langle g\rangle^{^{\mathrm{po}}}_x\in\hat{\mathfrak F}^{^{\mathrm{po}}}$ ($x\in\hat X$) and $\overset{\scriptscriptstyle\circ}p([f]^{^{\circ\mathrm{p}}}_x)=x$ for $[f]^{^{\circ\mathrm{p}}}_x\in\overset{\scriptscriptstyle\circ\mathrm{p}}{\mathfrak F}$ ($x\in X$). Let $\mathcal F^{^{\circ\mathrm{p}}}_x$ be the set of all complete punctured porous partial germs in~$\hat{\mathcal F}^{^{\mathrm{po}}}_x$ ($x\in X$), which is called the \textit{complete} (or \textit{unbranched}) \textit{punctured porous partial stalk} of~$\mathcal F$ at~$x$. Let
$$
\mathfrak F^{^{\circ\mathrm{p}}}:=\bigcup_{x\in X}\mathcal F^{^{\circ\mathrm{p}}}_x,
$$
and $p:=\hat p|_{\mathfrak F^{^{\circ\mathrm{p}}}}$. Then
$$
p:\ \mathfrak F^{^{\circ\mathrm{p}}}\longrightarrow X
$$
is also a projection.

For a partial point set~$A$ let $\hat{\mathcal F}^{^{\mathrm{po}}}_x(A):={\mathcal F}^{^{\circ\mathrm{p}}}_x$ for $x\in X\!\setminus\!A$ and $\hat{\mathcal F}^{^{\mathrm{po}}}_x(A):=\hat{\mathcal F}^{^{\mathrm{po}}}_x$ for $x\in A$. Define
$$
\hat{\mathfrak F}^{^{\mathrm{po}}}(A):=\bigcup_{x\in\hat X}\hat{\mathcal F}^{^{\mathrm{po}}}_x(A)
$$
and
$$
\hat{\mathscr N}^{^{\mathrm{po}}}(\mathcal F(\hat{\mathscr B}_A)):=\bigcup_{x\in X}\{\langle V,f\rangle^{^{\mathrm{po}}}:\,V\in\hat{\mathscr B}_A(x),\ f\in\overset{\,\,\scriptscriptstyle\mathrm{po}}{\mathcal F}(V\!\setminus\!\{x\})\}.
$$

We can obtain the following results corresponding to Theorems~\ref{theorem2.1}, \ref{theorem2.2} and~\ref{theorem2.3} by similar reasoning to the proofs of the theorems, respectively.

\begin{theorem2.1'}\label{theorem2.1'}
Suppose $(X,\mathscr T;\hat{X},\hat{\mathscr T})$ is a $T_3$ universal topological space with a basic open $($resp.\ punctured\,$)$ partial neighborhood basis~$\hat{\mathscr B}$ $($resp.\ $\mathscr B)$. Suppose $\mathcal F$ is a presheaf on $(X,\mathscr T)$. In~$(2)$, $(3)$ and\/~$(4)$ below, we also suppose $\mathcal F$ is porously consistent. Then
\par\medskip
\noindent
$(1)$ $\hat{\mathscr N}^{^{\mathrm{po}}}(\mathcal F(\hat{\mathscr B}))$ is a basis for a topology on~$\hat{\mathfrak F}^{^{\mathrm{po}}}$, and so is $\hat{\mathscr N}^{^{\mathrm{po}}}(\mathcal F)$ for the same topology, denoted~$\hat{\mathscr T}^{^{\mathrm{po}}}(\mathcal F)$, and the projection
$$
\hat p:\ (\hat{\mathfrak F}^{^{\mathrm{po}}},\hat{\mathscr T}^{^{\mathrm{po}}}(\mathcal F))\longrightarrow(\hat X,\hat{\mathscr T})
$$
is a local homeomorphism.
\par\smallskip\noindent
$(2)$ $\mathscr N^{^{\mathrm{po}}}(\mathcal F)$ is a basis for a topology on~$\mathfrak F^{^{\circ\mathrm{p}}}$, denoted~$\mathscr T^{^{\circ\mathrm{p}}}(\mathcal F)$, and the projection
$$
p:\ ({\mathfrak F}^{^{\circ\mathrm{p}}},{\mathscr T}^{^{\circ\mathrm{p}}}(\mathcal F))\longrightarrow(X,{\mathscr T})
$$
is a local homeomorphism.
\par\smallskip\noindent
$(3)$ For a partial point set~$A$ $(\mathtt I\subseteq A\subseteq\hat X)$, $\hat{\mathscr N}^{^{\mathrm{po}}}(\mathcal F(\hat{\mathscr B}_A))$ is a basis for a topology on~$\hat{\mathfrak F}^{^{\mathrm{po}}}(A)$, denoted~$\hat{\mathscr T}^{^{\mathrm{po}}}_A(\mathcal F)$, and the projection
$$
\hat p_{_A}:\ (\hat{\mathfrak F}^{^{\mathrm{po}}}(A),\hat{\mathscr T}^{^{\mathrm{po}}}_A(\mathcal F))\longrightarrow(\hat X,\hat{\mathscr T}(A))
$$
is a local homeomorphism.
\par\smallskip\noindent
$(4)$ $\mathfrak N^{^{\mathrm{po}}}(\mathcal F(\mathscr B))$ is a perfect filterbase structure system on $({\mathfrak F}^{^{\circ\mathrm{p}}},{\mathscr T}^{^{\circ\mathrm{p}}}(\mathcal F))$ and under some obvious assumption $({\mathfrak F}^{^{\circ\mathrm{p}}},{\mathscr T}^{^{\circ\mathrm{p}}}(\mathcal F); \hat{\mathfrak F}^{^{\mathrm{po}}},\hat{\mathscr T}^{^{\mathrm{po}}}(\mathcal F))$ is a universal topological space determined by $\mathfrak N^{^{\mathrm{po}}}(\mathcal F(\mathscr B))$ with the basic $($resp.\ punctured\/$)$ partial neighborhood basis $\hat{\mathscr N}^{^{\mathrm{po}}}(\mathcal F(\hat{\mathscr B}))$ $($resp.\ $\mathscr N^{^{\mathrm{po}}}(\mathcal F(\mathscr B)))$. Moreover, the projection
$$
\hat p:\ \hat{\mathfrak F}^{^{\mathrm{po}}}\longrightarrow\hat X
$$
is an exact local homeomorphism and hence an essential local homeomorphism.  \hfill $\square$
\end{theorem2.1'}

We call the space $({\mathfrak F}^{^{\circ\mathrm{p}}},{\mathscr T}^{^{\circ\mathrm{p}}}(\mathcal F); \hat{\mathfrak F}^{^{\mathrm{po}}},\hat{\mathscr T}^{^{\mathrm{po}}}(\mathcal F))$ in the above theorem the \textit{derived porous universal topological space} over $(X,\mathscr T;\hat X,\hat{\mathscr T})$ by~$\mathcal F$.
\smallskip

Suppose $\mathcal F$ is a presheaf on $(X,\mathscr T)$ and $Y$ is an open set in~$X$. Let $f$, $g\in\overset{\scriptscriptstyle\,\,\mathrm{po}}{\mathcal F}(Y)$. If there exists a quasi-discrete closed subset~$E$ of~$Y$ such that $f|_{Y\setminus E}=g|_{Y\setminus E}$, then we say that $f$ is \textit{porously equal} to~$g$, denoted $f\overset{\scriptscriptstyle\mathrm{po}}=g$ (if $f$, $g\in\overset{\scriptscriptstyle\,\,\circ}{\mathcal F}(Y)$ (see~(\ref{2.0})) and $f|_{Y\setminus E}=g|_{Y\setminus E}$, where $E$ is a discrete subset of~$Y$, then we say that $f$ is \textit{permissibly equal} to~$g$, denoted $f\overset{\scriptscriptstyle\circ}=g$). The presheaf~$\mathcal F$ on $(X,\mathscr T)$ is said to satisfy the \textit{porous uniqueness condition} on a universal topological space $(X,\mathscr T;\hat X,\hat{\mathscr T})$ if for every domain~$Y$ in~$(X,\mathscr T)$, given any $f$, $g\in\overset{\scriptscriptstyle\,\,\mathrm{po}}{\mathcal F}(Y)$ and any $a\in\hat X$ satisfying there exists $B\in\mathscr B(a)$ with $B\subseteq Y$, the equality $\langle f\rangle^{^{\mathrm{po}}}_a=\langle g\rangle^{^{\mathrm{po}}}_a$ always implies $f\overset{\scriptscriptstyle\mathrm{po}}=g$.

\begin{theorem2.2'}\label{2.2'}
Suppose $(X,\mathscr T;\hat X,\hat{\mathscr T})$ is a locally connected $T_3$ universal topological space and $\mathcal F$ is a presheaf on~$(X,\mathscr T)$. If $\mathcal F$ satisfies the porous uniqueness condition on~$\hat X$, then both $({\mathfrak F}^{^{\circ\mathrm{p}}},{\mathscr T}^{^{\circ\mathrm{p}}}(\mathcal F))$ and $(\hat{\mathfrak F}^{^{\mathrm{po}}},\hat{\mathscr T}^{^{\mathrm{po}}}(\mathcal F))$ are Hausdorff spaces, furthermore, $({\mathfrak F}^{^{\circ\mathrm{p}}},{\mathscr T}^{^{\circ\mathrm{p}}}(\mathcal F); \hat{\mathfrak F}^{^{\mathrm{po}}},\hat{\mathscr T}^{^{\mathrm{po}}}(\mathcal F))$ is a $T_3$ universal topological space.
\hfill $\square$
\end{theorem2.2'}

For nonempty $U\in\mathscr T$ and $f\in\overset{\scriptscriptstyle\,\,\mathrm{po}}{\mathcal F}(U)$ we denote
$$
[U,f]^{^{\circ\mathrm{p}}}:=\{[f]^{^{\circ\mathrm{p}}}_x:\,x\in U\}
$$
and define
$$
\overset{\scriptscriptstyle\circ\mathrm{p}}{\mathscr N}(\mathcal F):=\{[U,f]^{^{\circ\mathrm{p}}}:\,\emptyset\ne U\in\mathscr T,\ f\in\overset{\scriptscriptstyle\,\,\mathrm{po}}{\mathcal F}(U)\}.
$$

\begin{theorem2.3'}\label{2.3'}
Suppose $(X,\mathscr T)$ is a strongly locally connected $T_3$ topological space and $\mathcal F$ is a presheaf on~$(X,\mathscr T)$ which satisfies the porous uniqueness condition. Then \,$\overset{\scriptscriptstyle\circ\mathrm{p}}{\mathscr N}(\mathcal F)$ is a basis for a topology on~$\overset{\scriptscriptstyle\circ\mathrm{p}}{\mathfrak F}$, denoted~$\overset{\scriptscriptstyle\circ\mathrm{p}}{\mathscr T}(\mathcal F)$. Moreover, $(\overset{\scriptscriptstyle\circ\mathrm{p}}{\mathfrak F},\overset{\scriptscriptstyle\circ\mathrm{p}}{\mathscr T}(\mathcal F))$ is a $T_3$ space and the projection
$$
\overset{\scriptscriptstyle\circ}p:\ (\overset{\scriptscriptstyle\circ\mathrm{p}}{\mathfrak F},\overset{\scriptscriptstyle\circ\mathrm{p}}{\mathscr T}(\mathcal F))\longrightarrow(X,{\mathscr T})
$$
is a local homeomorphism.  \hfill $\square$
\end{theorem2.3'}

The \textit{porous uniqueness condition} in Theorem~2.3$'$ means: For every domain~$Y$ in $(X,\mathscr T)$, the equality $[f]^{^{\circ\mathrm{p}}}_a=[g]^{^{\circ\mathrm{p}}}_a$, where $f$, $g\in\overset{\scriptscriptstyle\,\,\mathrm{po}}{\mathcal F}(Y)$ and $a$ is any point in~$X$ satisfying $Y\!\setminus\!\{a\}$ is a punctured neighborhood of~$a$, always implies $f\overset{\scriptscriptstyle\mathrm{po}}=g$.

\medskip
In the following we present a general case. Suppose $(X,\mathscr T;\hat X,\hat{\mathscr T})$ is a $T_3$ universal topological space and $\mathsf{P}$ is a subset of~$\hat X$. Let $U$ be an open set in $(X,\mathscr T)$ and $E$ a set in~$X$. If $E$ is a quasi-discrete in~$U$ and for $x\in\hat X\!\setminus\!\mathsf{P}$ there exists a punctured neighborhood~$\overset{\scriptscriptstyle\,\circ}N(x)$ of~$x$ in $(\hat{X},\check{\mathscr T})$ such that $\overset{\scriptscriptstyle\,\circ}N(x)\cap U\subseteq U\!\setminus\!E$, then we say that $E$ is \textit{${\mathsf{P}}$-quasi-discrete} in~$U$ and $U\!\setminus\!E$ is \textit{${\mathsf{P}}$-porous} corresponding to~$U$. We call $\mathsf P$ a \textit{porous point set}.

If we use ${\mathsf{P}}$-quasi-discrete (resp.\ ${\mathsf{P}}$-porous) sets to replace quasi-discrete (resp.\ porous) sets in the preceding part of this subsection, then we can get corresponding notions and results. What we need to do is just to replace ``quasi", ``porous(ly)", ``$\mathrm{po}$" and ``$\circ\mathrm{p}$" by ``${\mathsf{P}}$-quasi", ``${\mathsf{P}}$-porous(ly)", ``$\mathsf{P}$" and ``$\circ\mathsf{P}$", respectively. For instance, we have a \textit{${\mathsf{P}}$-porous filter base}
$$
\mathcal B^{^{\mathsf{P}}}:=\{B\!\setminus\!E:\,B\in\mathcal B \text{ and } E \text{ is closed and ${\mathsf{P}}$-quasi-discrete in } B\},
$$
a \textit{perfect ${\mathsf{P}}$-porous filterbase structure system} (in $X$ corresponding to a perfect filterbase structure system~$\mathfrak B$)
$$
\mathfrak B^{\!^{\mathsf{P}}}:=\{\mathcal B^{^{\mathsf{P}}}:\,\mathcal B\in\mathfrak B\},
$$
the \textit{basic} (resp.\ \textit{punctured}) \textit{${\mathsf{P}}$-porous partial neighborhood basis}~$\hat{\mathscr B}^{^{\mathsf{P}}}$ (resp.~$\mathscr B^{^{\mathsf{P}}}$) (on $(X,\mathscr T;\hat X,\hat{\mathscr T})$),
the \textit{${\mathsf{P}}$-porous filterbase topology}~$\hat{\mathscr T}^{^{\mathsf{P}}}$, the \textit{${\mathsf{P}}$-porous universal topological space} $(X,\overset{\scriptscriptstyle\,\mathsf{P}}{\mathscr T};\hat X,\hat{\mathscr T}^{^{\mathsf{P}}})$ (corresponding to $(X,\mathscr T;\hat X,\hat{\mathscr T})$), and so on. We list some other notations as follows:
$\hat{\mathscr B}^{^{\mathsf{P}}}(x)$,
$\mathscr B^{^{\mathsf{P}}}(x)$,
$\hat{\mathscr T}^{^{\circ\mathsf{P}}}(x)$,
$\hat{\mathscr T}^{^{\circ\mathsf{P}}}$,
$\overset{\scriptscriptstyle\circ\mathsf{P}}{\mathscr T}(x)$,
$\overset{\scriptscriptstyle\circ\mathsf{P}}{\mathscr T}$,
$\hat{\mathscr B}^{^{\mathsf{P}}}_A(x)$,
$\mathscr B^{^{\mathsf{P}}}_A(x)$,
$\hat{\mathscr B}^{^{\mathsf{P}}}_A$, $\mathscr B^{^{\mathsf{P}}}_A$,
$\hat{\mathscr T}^{^{\mathsf{P}}}(A)$,
$\check{\mathscr B}^{^{\mathsf{P}}}(x)$,
$\check{\mathscr B}^{^{\mathsf{P}}}$,
$\check{\mathscr T}^{^{\mathsf{P}}}$,
$\overset{\scriptscriptstyle\,\,\mathsf{P}}{\mathcal F}(U)$,
$\hat{\mathcal F}^{^{\mathsf{P}}}_x$,
$\hat{\mathfrak F}^{^{\mathsf{P}}}$,
$\langle g\rangle^{\!^{\mathsf{P}}}_x$
(here $\langle g\rangle^{\!^{\mathsf{P}}}_x=\langle g\rangle_x$ for $x\in\hat X\!\setminus\!\mathsf{P}$ and $\langle g\rangle^{\!^{\mathsf{P}}}_x\subseteq\langle g\rangle^{\!^{\mathrm{po}}}_x$ for $x\in\mathsf P$; we may assume $\langle g\rangle^{\!^{\mathsf{P}}}_x=\langle g\rangle^{\!^{\mathrm{po}}}_x$ for $x\in\mathsf P$),
$\langle U,f\rangle^{\!^{\mathsf{P}}}$, $\langle\hat{U},f\rangle^{\!^{\mathsf{P}}}$, $\hat{\mathscr N}^{^{\mathsf{P}}}(\mathcal F)$, $\hat{\mathscr N}^{^{\mathsf{P}}}(\mathcal F(\hat{\mathscr B}))$,
$\hat{\mathscr N}^{^{\mathsf{P}}}_f(\mathcal F(\hat{\mathscr B}))(\langle f\rangle^{\!^{\mathsf{P}}}_x)$,
${\mathscr N}^{^{\mathsf{P}}}(\mathcal F)$,
${\mathscr N}^{^{\mathsf{P}}}(\mathcal F(\mathscr B))$,
${\mathscr N}^{^{\mathsf{P}}}_f(\mathcal F(\mathscr B))(\langle f\rangle^{\!^{\mathsf{P}}}_x)$,
${\mathfrak N}^{^{\mathsf{P}}}(\mathcal F(\mathscr B))$,
$\overset{\scriptscriptstyle\,\,\circ\mathsf{P}}{\mathcal F}_x$,
$[f]^{^{\circ\mathsf{P}}}_x$
(here $[f]^{^{\circ\mathsf{P}}}_x=[f]^{\circ}_x$ for $x\in\hat X\!\setminus\!\mathsf{P}$ and $[f]^{^{\circ\mathsf{P}}}_x$ is assumed to be $[f]^{^{\circ\mathrm{p}}}_x$ for $x\in\mathsf P$),
$\overset{\scriptscriptstyle\circ\mathsf{P}}{\mathfrak F}$,
$\mathcal F^{^{\circ\mathsf{P}}}_x$,
$\mathfrak F^{^{\circ\mathsf{P}}}$,
$\hat{\mathcal F}^{^{\mathsf{P}}}_x(A)$,
$\hat{\mathfrak F}^{^{\mathsf{P}}}(A)$,
$\hat{\mathscr N}^{^{\mathsf{P}}}(\mathcal F(\hat{\mathscr B}_A))$,
$\hat{\mathscr T}^{^{\mathsf{P}}}(\mathcal F)$,
${\mathscr T}^{^{\circ\mathsf{P}}}(\mathcal F)$,
$\hat{\mathscr T}^{^{\mathsf{P}}}_A(\mathcal F)$,
$\overset{\scriptscriptstyle\mathsf{P}}=$,
$[U,f]^{^{\circ\mathsf{P}}}$,
$\overset{\scriptscriptstyle\,\circ\mathsf{P}}{\mathscr N}(\mathcal F)$,
$\overset{\scriptscriptstyle\circ\mathsf{P}}{\mathscr T}(\mathcal F)$
and
$({\mathfrak F}^{^{\circ\mathsf{P}}},{\mathscr T}^{^{\circ\mathsf{P}}}(\mathcal F); \hat{\mathfrak F}^{^{\mathsf{P}}},\hat{\mathscr T}^{^{\mathsf{P}}}(\mathcal F))$.

\smallskip
\begin{remark}\label{remark0}
In the $\mathsf P$-porous case, the results corresponding to Theorems~2.1$'$, 2.2$'$ and~2.3$'$ are true by similar reasoning to Theorems~\ref{theorem2.1}, \ref{theorem2.2} and~\ref{theorem2.3}, which are denoted by Theorems~2.1$''$, 2.2$''$ and~2.3$''$, respectively.
\end{remark}
\smallskip

At the end of this section, we remark that in natural ways we may define the following equivalences: $\varphi_x\thicksim\psi_x$, where $\varphi_x$ and~$\psi_x$ are chosen from $\{[f]_x,\ [f]^\circ_x,\ [f]^{^{\circ\mathrm{p}}}_x,\ \langle f\rangle_x,\ \langle f\rangle^{^{\!\mathrm{po}}}_x\}$ and $\{[g]_x,\ [g]^\circ_x,\ [g]^{^{\circ\mathrm{p}}}_x,\ \langle g\rangle_x,\ \langle g\rangle^{^{\!\mathrm{po}}}_x\}$, respectively.
Specially if $\varphi_x$ and~$\psi_x$ are the same kind of germs then $\varphi_x\thicksim\psi_x$ means $\varphi_x=\psi_x$. Some of the equivalences have been defined in the preceding paragraphs and here as another example we define $[f]^{^{\circ\mathrm{p}}}_x\thicksim\langle g\rangle_x$ as follows: Suppose $f\in\mathcal F(\overset{\scriptscriptstyle\circ\mathrm{p}}{\mathscr T}(x))$ and $g\in\mathcal F(\hat{\mathscr T}^\circ(x))$. If there exist $V\in\hat{\mathscr T}^\circ(x)$ and a quasi-discrete closed subset~$E$ of~$V$ such that $f|_{V\setminus E}=g|_{V\setminus E}$ and thus $\langle f\rangle^{^{\!\mathrm{po}}}_x$\:($:=\langle f|_{V\setminus E}\rangle^{^{\!\mathrm{po}}}_x$)$\;=\langle g\rangle^{^{\!\mathrm{po}}}_x$, then we say that $[f]^{^{\circ\mathrm{p}}}_x$ and~$\langle g\rangle_x$ are \textit{equivalent}. If necessary, we may regard two equivalent germs as the same.

\section{ Algebraic functions and Riemann surfaces}

\subsection{A basic Riemann surface}

Suppose $X$ is a Riemann surface (in the usual sense, see e.g.\ \cite{Do}, \cite{Fo} and~\cite{We}, which we will call a \textit{traditional Riemann surface} later) and $\mathscr T$ is its topology. Now we choose a perfect filterbase structure system~$\mathfrak B$, whose elements consist of domains (usually simply connected ones), and then obtain a universal topological space $(X,\mathscr T;\hat X,\hat{\mathscr T})$, which we call a \textit{basic Riemann surface}. Here we also assume $\hat X$ is Hausdorff (i.e.\ $(\hat X,\check{\mathscr T})$ is Hausdorff).

We recall the notion of a universal topological subspace defined in the end of Subsection~2.1.  Suppose $\hat Y\subseteq\hat X$ and $\mathtt I_0\subseteq X\cap\hat Y$ satisfy that $\mathtt I_0$ is closed and discrete in $(X,\mathscr T)$, $\hat Y$ is an open subset of $(\hat X,{\mathscr T}(\mathtt I\cup\mathtt I_0))$ ($\mathtt I$ is the ideal point set of the basic Riemann surface~$\hat X$ and ${\mathscr T}(\mathtt I\cup\mathtt I_0)$ is the $(\mathtt I\cup\mathtt I_0)$-mixed topology of~$\hat X$) and $Y:=(\hat Y\cap X)\!\setminus\!\mathtt I_0\ne\emptyset$. Then $Y$ is an open subset of~$(X,\mathscr T)$ and the space $(\hat Y,{\mathscr T}(\mathtt I\cup\mathtt I_0)|_{\hat Y})$ is Hausdorff (${\mathscr T}(\mathtt I\cup\mathtt I_0)|_{\hat Y}$ is the induced topology).  If $Y$ is connected then $Y$ is a traditional Riemann surface. Let $\mathfrak B_Y=\{\mathcal B_Y(y):\,y\in\hat Y\}$ be the induced perfect filterbase structure system by~$\mathfrak B$ on~$Y$. Then $\mathcal B_Y(y)\thicksim\mathcal B(y)$ for each $y\in\hat Y$ and the universal topological subspace $(Y,\mathscr T';\hat Y,\hat{\mathscr T}')$ is the universal topological space determined by~$\mathfrak B_Y$. If $Y$ is connected then $(Y,\mathscr T';\hat Y,\hat{\mathscr T}')$  is also a basic Riemann surface, which we call a \textit{basic} (\textit{Riemann}) \textit{subsurface} of the basic Riemann surface $(X,\mathscr T;\hat X,\hat{\mathscr T})$. This kind of subsurface is similar to a domain in a traditional Riemann surface. So generally the propositions which hold on a basic Riemann surface are also true on ``domains" in a basic Riemann surface.

\subsection{Analytic continuation}

Suppose $(X,\mathscr T;\hat X,\hat{\mathscr T})$ is a basic Riemann surface and $\hat{\mathscr T}(A)$ is a mixed topology of~$\hat X$, where $\mathtt{I}\subseteq A\subseteq\hat X$ ($\mathtt{I}$ is the ideal point set of~$\hat X$). Let $(\mathcal H,\rho)$ denote the sheaf of holomorphic functions on $(X,\mathscr T)$. Suppose $u:[0,1]\to\hat X$ is a curve in $(\hat X, \hat{\mathscr T}(A))$ (i.e.\ $\,u:[0,1]\to(\hat X, \hat{\mathscr T}(A))$ is continuous), which is called an \textit{$A$-curve} in~$\hat X$, and $a=u(0)$, $b=u(1)$. If $\hat{\mathscr T}(A)=\check{\mathscr T}$ (the essential topology of~$\hat X$) then $u$ is called an \textit{essential curve} in~$\hat X$. Obviously, $A$-curves are essential curves. 

Let $\widetilde{\mathcal H}_x$ denote the set of all kinds of germs of~$\mathcal H$ at~$x\in\hat X$, i.e.\ $\widetilde{\mathcal H}_x:=\mathcal H_x\cup\overset{\scriptscriptstyle\,\,\circ}{\mathcal H}_x\cup\hat{\mathcal H}_x\cup\hat{\mathcal H}^{^{\mathrm{po}}}_x\cup\overset{\scriptscriptstyle\,\circ\mathrm{p}}{\mathcal H}_x$ for $x\in X$ and $\widetilde{\mathcal H}_x:=\hat{\mathcal H}_x\cup\hat{\mathcal H}^{^{\mathrm{po}}}_x$ for $x\in\mathtt I$.
Suppose $\mathsf P\subseteq\hat X$.
Let $\hat{\mathcal H}^{^{\mathsf P}}_x(A):={\mathcal H}^{^{\circ\mathsf{P}}}_x$ for $x\in X\!\setminus\!A$ and $\hat{\mathcal H}^{^{\mathsf P}}_x(A):=\hat{\mathcal H}^{^{\mathsf P}}_x$ for $x\in A$,\, where ${\mathcal H}^{^{\circ\mathsf{P}}}_x\!:={\mathcal H}^{\circ}_x$ for $x\in X\!\setminus\!\mathsf P$, ${\mathcal H}^{^{\circ\mathsf{P}}}_x:={\mathcal H}^{^{\circ\mathrm{p}}}_x$ for $x\in X\cap\mathsf P$, $\hat{\mathcal H}^{^{\mathsf P}}_x:=\hat{\mathcal H}_x$ for $x\in\hat X\!\setminus\!\mathsf P$ and $\hat{\mathcal H}^{^{\mathsf P}}_x:=\hat{\mathcal H}^{^{\mathrm{po}}}_x$ for $x\in\mathsf P$. Suppose $\varphi_{_{\scriptstyle a}}\in\widetilde{\mathcal H}_a$ and $\varphi_{_{\scriptstyle b}}\in\widetilde{\mathcal H}_b$. If there exists a family $\{\psi_{_{\scriptstyle t}}\in\hat{\mathcal H}^{^{\mathsf P}}_{u(t)}(A):\,t\in[0,1]\}$ such that $\psi_{_{\scriptstyle a}}\thicksim\varphi_{_{\scriptstyle a}}$, $\psi_{_{\scriptstyle b}}\thicksim\varphi_{_{\scriptstyle b}}$ and for every $s\in[0,1]$, there exists a neighborhood $T\subseteq[0,1]$ of~$s$, a domain~$U$ in $(\hat X,\hat{\mathscr T}(A))$ with $u(T)\subseteq U$ and $f\in\overset{\scriptscriptstyle\,\,\mathsf P}{\mathcal H}(U)$ ($\overset{\scriptscriptstyle\,\,\mathsf P}{\mathcal H}(U):=\{f\in\mathcal H(U\!\setminus\!E):\,E \text{ is a $\mathsf{P}$-quasi-discrete closed subset of~}U\}$) satisfying $\langle f\rangle^{^{\mathsf P}}_{u(t)}=\psi_{_{\scriptstyle t}}$ for every $t\in T$, where $\langle f\rangle^{^{\mathsf P}}_x:=\langle f\rangle^{^{\mathrm{po}}}_x$ for $x\in\mathsf P$ and $\langle f\rangle^{^{\mathsf P}}_x:=\langle f\rangle_x$ for $x\in\hat X\setminus\mathsf P$, then we say that $\varphi_{_{\scriptstyle b}}$ is a \textit{$\mathsf P$-porous} (\textit{analytic}) \textit{continuation} of~$\varphi_{_{\scriptstyle a}}$ along~$u$ in $(\hat X,\hat{\mathscr T}(A))$ or that $\varphi_{_{\scriptstyle b}}$ is a \textit{$\mathsf P$-porous $A$-analytic continuation} (or \textit{$\mathsf P$-porous $A$-continuation}) of~$\varphi_{_{\scriptstyle a}}$ along~$u$. Corresponding to $\mathsf P=\emptyset$ (resp.\ $\mathsf P=\hat X$) the continuation is called an (resp.\ a \textit{porous}) \textit{$A$-analytic continuation} (or \textit{$A$-continuation}). The (resp.\ $\mathsf P$-porous, porous) $\mathtt I$-continuation is also called an (resp.\ a \textit{$\mathsf P$-porous}, a \textit{porous}) \textit{essential} (\textit{analytic}) \textit{continuation}. If the patial point set $\mathtt I=\emptyset$ then essential curves and essential analytic continuations are \textit{curves} and \textit{analytic continuations} in the usual sense, respectively. It is obvious that the ($\mathsf P$-porous) $A$-continuation implies the porous $A$-continuation.

Easily we see that $\varphi_{_{\scriptstyle b}}$ is a $\mathsf P$-porous $A$-continuation of~$\varphi_{_{\scriptstyle a}}$ along~$u$ if and only if the following holds: There exist a partition $0=t_0<t_1<\cdots<t_n=1$ of $[0,1]$, domains~$U_j$ in $(\hat X,\hat{\mathscr T}(A))$ with $u([t_{j-1},t_j])\subseteq U_j$ and $f_j\in{\mathcal H}(U_j\!\setminus\! E_j)$ ($E_j$ is a $\mathsf P$-quasi-discrete and closed in~$U_j$, $j=1$, $2$, $\dots$,~$n$) such that $\langle f_1\rangle^{^{\mathsf P}}_a\thicksim\varphi_{_{\scriptstyle a}}$, $\langle f_n\rangle^{^{\mathsf P}}_b\thicksim\varphi_{_{\scriptstyle b}}$ and $f_j|_{V_j\setminus(E_j\cup E_{j+1})}=f_{j+1}|_{V_j\setminus(E_j\cup E_{j+1})}$ ($j=1$, $2$, $\dots$, $n\!-\!1$), where $V_j$ is the connected component of $U_j\cap U_{j+1}$ containing~$u(t_j)$.

Let
$$
\hat{\mathfrak H}^{^{\mathsf{P}}}(A):=\bigcup_{x\in\hat X}\hat{\mathcal H}^{^{\mathsf{P}}}_x(A)
$$
and
$$
\hat{\mathscr N}^{^{\mathsf{P}}}(\mathcal H(\hat{\mathscr B}_A)):=\bigcup_{x\in\hat X}\{\langle V,f\rangle^{^{\mathsf{P}}}:\,V\in\hat{\mathscr B}_A(x),\ f\in\overset{\scriptscriptstyle\,\,\mathsf{P}}{\mathcal H}(V\!\setminus\!\{x\})\},
$$
where $\langle V,f\rangle^{^{\mathsf{P}}}:=\{\langle f\rangle^{^{\mathsf P}}_x:x\in V\}$.
Then by Theorem~2.1$''$(3) (see Remark~\ref{remark0}), $\hat{\mathscr N}^{^{\mathsf{P}}}(\mathcal H(\hat{\mathscr B}_A))$ is a basis for a topology~$\hat{\mathscr T}^{^{\mathsf{P}}}_A(\mathcal H)$ on~$\hat{\mathfrak H}^{^{\mathsf{P}}}(A)$. Similarly to \cite[Lemma~(7.2)]{Fo}, by the definition of $\mathsf P$-porous $A$-continuation and Theorem~2.1$''$(3) we have

\begin{lemma}\label{lemma3.1}
Suppose $(X,\mathscr T;\hat X,\hat{\mathscr T})$ is a basic Riemann surface and $A$ is a partial point set $(\mathtt{I}\subseteq A\subseteq\hat X)$. Suppose $u:[0,1]\to\hat X$ is an $A$-curve with $a=u(0)$ and $b=u(1)$. Suppose $\mathsf P\subseteq\hat X$, $\varphi_{_{\scriptstyle a}}\in\widetilde{\mathcal H}_a$ and $\varphi_{_{\scriptstyle b}}\in\widetilde{\mathcal H}_b$. Then $\varphi_{_{\scriptstyle b}}$ is a $\mathsf P$-porous $A$-continuation of~$\varphi_{_{\scriptstyle a}}$ along~$u$ if and only if there exists a lifting $\hat u:[0,1]\to(\hat{\mathfrak H}^{^{\mathsf{P}}}(A),\hat{\mathscr T}^{^{\mathsf{P}}}_A(\mathcal H))$ of the $A$-curve~$u$ $($with respect to the projection~$\hat p_{_A})$ such that $\hat u(0)\thicksim\varphi_{_{\scriptstyle a}}$ and $\hat u(1)\thicksim\varphi_{_{\scriptstyle b}}$.
\hfill$\square$
\end{lemma}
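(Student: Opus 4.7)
The plan is to model the proof on the classical lifting lemma for sheaves of germs (cf.\ \cite[Lemma~(7.2)]{Fo}), using the topology $\hat{\mathscr T}^{^{\mathsf{P}}}_A(\mathcal H)$ on $\hat{\mathfrak H}^{^{\mathsf{P}}}(A)$ given by Theorem~2.1$''$(3). The basic open sets are of the form $\langle V,f\rangle^{^{\mathsf{P}}}$ with $V\in\hat{\mathscr B}_A(x)$ and $f\in\overset{\scriptscriptstyle\,\,\mathsf{P}}{\mathcal H}(V\!\setminus\!\{x\})$, and the projection $\hat p_{_A}$ sends $\langle f\rangle^{^{\mathsf P}}_y\mapsto y$. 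Under this setup, a lifting of $u$ is nothing but a continuous family of germs $t\mapsto \psi_t\in\hat{\mathcal H}^{^{\mathsf P}}_{u(t)}(A)$, which is precisely the data appearing in the definition of $\mathsf P$-porous $A$-continuation. So the proof reduces to translating ``continuous'' in $(\hat{\mathfrak H}^{^{\mathsf{P}}}(A),\hat{\mathscr T}^{^{\mathsf{P}}}_A(\mathcal H))$ into the local-representation condition stated in the definition.

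First I would prove the ``if'' direction. Assume a lifting $\hat u:[0,1]\to\hat{\mathfrak H}^{^{\mathsf{P}}}(A)$ exists with $\hat u(0)\thicksim\varphi_{_{\scriptstyle a}}$ and $\hat u(1)\thicksim\varphi_{_{\scriptstyle b}}$. Put $\psi_{_{\scriptstyle t}}:=\hat u(t)\in\hat{\mathcal H}^{^{\mathsf P}}_{u(t)}(A)$. Fix $s\in[0,1]$ and take a basic open neighborhood $\langle V,f\rangle^{^{\mathsf{P}}}$ of $\hat u(s)$ with $V\in\hat{\mathscr B}_A(u(s))$ and $f\in\overset{\scriptscriptstyle\,\,\mathsf{P}}{\mathcal H}(V\!\setminus\!\{u(s)\})$. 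By continuity of $\hat u$ there is a neighborhood $T\subseteq[0,1]$ of $s$ with $\hat u(T)\subseteq\langle V,f\rangle^{^{\mathsf{P}}}$; since $\hat p_{_A}{\scriptstyle\circ}\hat u=u$, also $u(T)\subseteq V$. Taking $U:=V$ (a neighborhood of $u(s)$ in $(\hat X,\hat{\mathscr T}(A))$ because $\hat{\mathscr B}_A(u(s))$ is a neighborhood basis there), $f$ serves as the desired section, and $\langle f\rangle^{^{\mathsf P}}_{u(t)}=\psi_{_{\scriptstyle t}}$ for $t\in T$. Hence $\varphi_{_{\scriptstyle b}}$ is a $\mathsf P$-porous $A$-continuation of~$\varphi_{_{\scriptstyle a}}$ along~$u$.

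For the ``only if'' direction, assume such a family $\{\psi_{_{\scriptstyle t}}\}$ is given. Define $\hat u(t):=\psi_{_{\scriptstyle t}}$; then $\hat p_{_A}(\hat u(t))=u(t)$ automatically. To check continuity at $s\in[0,1]$, pick any basic open set of the form $\langle W,g\rangle^{^{\mathsf{P}}}$ containing $\hat u(s)$ (it suffices to consider a neighborhood basis at $\hat u(s)$, since $\hat{\mathscr N}^{^{\mathsf{P}}}(\mathcal H(\hat{\mathscr B}_A))$ is a basis for $\hat{\mathscr T}^{^{\mathsf{P}}}_A(\mathcal H)$). Using the local datum $(T,U,f)$ at $s$ from the definition and shrinking $T$ and $U$, arrange $U\subseteq W$ and $f|_{U\setminus\{u(s)\}}$ to agree with $g$ on the intersection of their domains (this is possible because $\psi_{_{\scriptstyle s}}=\langle f\rangle^{^{\mathsf P}}_{u(s)}=\langle g\rangle^{^{\mathsf P}}_{u(s)}$, so the two germs coincide on some smaller neighborhood, possibly after removing an additional $\mathsf P$-quasi-discrete closed set; the closure properties attached to $\mathsf P$-quasi-discreteness in the previous subsection allow the union of finitely many such exceptional sets to remain admissible). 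Then $\hat u(T)\subseteq\langle W,g\rangle^{^{\mathsf{P}}}$. The endpoint equivalences $\hat u(0)\thicksim\varphi_{_{\scriptstyle a}}$ and $\hat u(1)\thicksim\varphi_{_{\scriptstyle b}}$ follow directly from $\psi_{_{\scriptstyle a}}\thicksim\varphi_{_{\scriptstyle a}}$ and $\psi_{_{\scriptstyle b}}\thicksim\varphi_{_{\scriptstyle b}}$.

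The main obstacle is bookkeeping in the ``only if'' direction: the germs $\psi_{_{\scriptstyle t}}$ live in $\hat{\mathcal H}^{^{\mathsf P}}_{u(t)}(A)$, which is either $\hat{\mathcal H}^{^{\mathsf P}}_{u(t)}$ (if $u(t)\in A$) or ${\mathcal H}^{^{\circ\mathsf{P}}}_{u(t)}$ (if $u(t)\in X\setminus A$), and the basic open sets have different shapes in the two cases depending on whether the centre is a partial point. One must therefore split into subcases, using that $u$ is an $A$-curve so $u(T)$ eventually lies in an $A$-mixed neighborhood of~$u(s)$. Provided one handles this case split (and the compatibility of $\mathsf P$-quasi-discrete exceptional sets under the equivalence of germs) carefully, the argument is otherwise a direct transcription of Forster's lifting lemma into the filterbase setting, and the rest is routine.
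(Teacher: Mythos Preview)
Your proposal is correct and follows exactly the approach the paper indicates: the paper does not give a written proof of this lemma but simply refers to \cite[Lemma~(7.2)]{Fo} together with the definition of $\mathsf P$-porous $A$-continuation and Theorem~2.1$''$(3), and your argument is precisely the expected transcription of Forster's lifting lemma into this setting. Your discussion of the case split according to whether $u(s)\in A$ or $u(s)\in X\setminus A$, and of the compatibility of $\mathsf P$-quasi-discrete exceptional sets, fills in details the paper leaves implicit.
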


By Theorems~2.1$'$ and~2.2$'$, Lemma~\ref{lemma3.1} and \cite[Theorem~(4.10)]{Fo} we obtain

\begin{theorem}[Monodromy Theorem]\label{theorem3.2}
Suppose $(X,\mathscr T;\hat X,\hat{\mathscr T})$ is a basic Riemann surface and $\hat{\mathscr T}(A)$ is a mixed topology of~$\hat X$ $(\mathtt{I}\subseteq A\subseteq\hat X)$. Let $a$, $b\in\hat X$. Suppose $u_0$ and~$u_1$ are homotopic $A$-curves from $a$ to~$b$ and $u_s$ $(0\leqslant\!s\!\leqslant1)$ is a deformation from $u_0$ to~$u_1$ in $(\hat X, \hat{\mathscr T}(A))$. Let $\varphi_{_{\scriptstyle a}}\in\widetilde{\mathcal H}_a$ and let $\varphi^{\scriptscriptstyle(0)}_{_{\scriptstyle b}}$, $\varphi^{\scriptscriptstyle(1)}_{_{\scriptstyle b}}\in\widetilde{\mathcal H}_b$ be the porous $A$-continuations of~$\varphi_{_{\scriptstyle a}}$ along~$u_0$ and~$u_1$, respectively. If $\varphi_{_{\scriptstyle a}}$ admits a porous $A$-continuation along every curve~$u_s$, then $\varphi^{\scriptscriptstyle(0)}_{_{\scriptstyle b}}\thicksim\varphi^{\scriptscriptstyle(1)}_{_{\scriptstyle b}}$.
\hfill$\square$
\end{theorem}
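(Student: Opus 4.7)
The plan is to reduce the theorem to the classical monodromy theorem (Forster's Theorem~(4.10)) applied to the derived sheaf space built in Theorem~2.1$'$. Since we are working with porous $A$-continuations, the relevant space is the total space $(\hat{\mathfrak H}^{^{\mathrm{po}}}(A),\hat{\mathscr T}^{^{\mathrm{po}}}_A(\mathcal H))$ of germs of (possibly porously-singular) holomorphic functions over~$\hat X$, equipped with the projection $\hat p_A$ to $(\hat X,\hat{\mathscr T}(A))$. Lemma~\ref{lemma3.1} (with $\mathsf P=\hat X$) converts porous $A$-continuation of germs along an $A$-curve into the existence of a continuous lift of that curve through $\hat p_A$ with prescribed starting germ; so the conclusion $\varphi^{\scriptscriptstyle(0)}_{_{\scriptstyle b}}\thicksim\varphi^{\scriptscriptstyle(1)}_{_{\scriptstyle b}}$ becomes the assertion that two liftings of homotopic paths in $(\hat X,\hat{\mathscr T}(A))$ with the same initial point have the same terminal point.

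First I would fix notation: write $\hat u_s\colon [0,1]\to\hat{\mathfrak H}^{^{\mathrm{po}}}(A)$ for the lifting of $u_s$ whose existence is provided by Lemma~\ref{lemma3.1}, with $\hat u_s(0)$ a fixed germ equivalent to~$\varphi_{_{\scriptstyle a}}$. By Theorem~2.1$'$(3) applied to the presheaf $\mathcal H$ of holomorphic functions, $\hat p_A$ is a local homeomorphism. The sheaf~$\mathcal H$ satisfies the porous uniqueness condition on any basic Riemann surface (this is the classical identity theorem for holomorphic functions, extended across closed quasi-discrete sets), so Theorem~2.2$'$ applies and the total space is Hausdorff. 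Thus the hypotheses of \cite[Theorem~(4.10)]{Fo}---a local homeomorphism over a Hausdorff base, together with a continuous deformation $H(s,t)=u_s(t)$ each of whose curves admits a lift starting at a fixed point in the fiber---are all in place.

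To invoke Forster's result cleanly I would check two small things: that $(\hat X,\hat{\mathscr T}(A))$ is Hausdorff (which is part of the definition of a basic Riemann surface, via the essential topology), and that the deformation $H$ is itself continuous as a map $[0,1]^2\to(\hat X,\hat{\mathscr T}(A))$ (this is built into the hypothesis that $u_s$ is a deformation of $A$-curves). With these, Forster's monodromy theorem yields $\hat u_0(1)=\hat u_1(1)$ in $\hat{\mathfrak H}^{^{\mathrm{po}}}(A)$, i.e.\ the terminal germs along $u_0$ and~$u_1$ coincide. Translating back through Lemma~\ref{lemma3.1} (both $\hat u_0(1)$ and $\hat u_1(1)$ are equivalent to $\varphi^{\scriptscriptstyle(0)}_{_{\scriptstyle b}}$ and $\varphi^{\scriptscriptstyle(1)}_{_{\scriptstyle b}}$ respectively, in the sense of the equivalences collected in the last paragraph of Section~2) gives $\varphi^{\scriptscriptstyle(0)}_{_{\scriptstyle b}}\thicksim\varphi^{\scriptscriptstyle(1)}_{_{\scriptstyle b}}$.

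The main obstacle I expect is bookkeeping rather than a substantial new idea: the starting germ $\varphi_{_{\scriptstyle a}}$ lives in $\widetilde{\mathcal H}_a$, which is a union of five different germ types, and the output lives in $\widetilde{\mathcal H}_b$, so one has to argue that these are coherently represented by single points of $\hat{\mathfrak H}^{^{\mathrm{po}}}(A)$ up to the equivalence~``$\thicksim$'' defined at the end of Section~2, in order for Lemma~\ref{lemma3.1} to be applicable uniformly in $s$. Once one commits to identifying equivalent germs (as the paper explicitly permits), the proof is a routine transcription of Forster's argument in the new setting, with the local-homeomorphism property of $\hat p_A$ playing the role that the étale space of a sheaf plays in the classical case.
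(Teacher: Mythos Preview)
Your proposal is correct and follows essentially the same route as the paper: the paper's own proof consists of the single sentence ``By Theorems~2.1$'$ and~2.2$'$, Lemma~3.1 and \cite[Theorem~(4.10)]{Fo} we obtain'', and your write-up is precisely an unpacking of that sentence---use Lemma~3.1 (with $\mathsf P=\hat X$) to turn porous $A$-continuation into lifts, use Theorems~2.1$'$ and~2.2$'$ to get a Hausdorff local homeomorphism $\hat p_A$, and then apply Forster's monodromy theorem.
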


We consider a traditional Riemann surface $(X,\mathscr T)$ as a basic Riemann surface $(X,\mathscr T;X,\mathscr T)$ (i.e.\ the ideal point set~$\mathtt I=\emptyset$). Then Theorem~\ref{theorem3.2} implies

\begin{corollary1}\label{corollary3.2-1}
Suppose $(X,\mathscr T)$ is a $($traditional\,$)$ Riemann surface. Let $a$, $b\in X$. Suppose $u_0$, $u_1$ are homotopic curves from $a$ to~$b$ and $u_s$ $(0\leqslant\!s\!\leqslant1)$ is a deformation from $u_0$ to~$u_1$. Let $\varphi_{_{\scriptstyle a}}\in\overline{\mathcal H}_a:=\mathcal H_a\cup\overset{\scriptscriptstyle\,\,\circ}{\mathcal H}_a\cup\overset{\scriptscriptstyle\,\circ\mathrm{p}}{\mathcal H}_a$ and let $\varphi^{\scriptscriptstyle(0)}_{_{\scriptstyle b}}$, $\varphi^{\scriptscriptstyle(1)}_{_{\scriptstyle b}}\in\overline{\mathcal H}_b$ be the porous continuations of~$\varphi_{_{\scriptstyle a}}$ along~$u_0$ and~$u_1$, respectively. If $\varphi_{_{\scriptstyle a}}$ admits a porous continuation along every curve~$u_s$, then $\varphi^{\scriptscriptstyle(0)}_{_{\scriptstyle b}}\thicksim\varphi^{\scriptscriptstyle(1)}_{_{\scriptstyle b}}$.
\hfill$\square$
\end{corollary1}

For a traditional Riemann surface $(X,\mathscr T)$, suppose $\mathsf S$ is a quasi-discrete closed subset of $(X,\mathscr T)$. We call $F\in\mathcal H(X\!\setminus\!\mathsf S)$ an \textit{analytic function} in~$X$ with the \textit{singularities}~$\mathsf S$. If $a\in\mathsf S$ is a removable singularity of~$F$ then we also say $F$ is \textit{analytic} at~$a$. By Corollary~1 we have

\begin{corollary2}\label{corollary3.2-2}
Suppose $(X,\mathscr T)$ is a simply connected $($traditional\,$)$ Riemann surface. Suppose $\mathsf S$ is a quasi-discrete closed subset of~$X$. Let $a\in X$ and $\varphi_{_{\scriptstyle a}}\in\overline{\mathcal H}_a$ $(=\mathcal H_a\cup\overset{\scriptscriptstyle\,\,\circ}{\mathcal H}_a\cup\overset{\scriptscriptstyle\,\circ\mathrm{p}}{\mathcal H}_a)$. If $\varphi_{_{\scriptstyle a}}$ admits a porous continuation along every curve~$u$ starting at~$a$ to some $\varphi_{_{\scriptstyle b}}\in\overline{\mathcal H}_b$ $(b\in X$ is the end point of~$u)$ and $\varphi_{_{\scriptstyle b}}$ is equivalent to a usual germ for every $b\in X\!\setminus\!\mathsf S$, then there exists a unique analytic function~$F$ in~$X$ with the singularities~$\mathsf S$ such that $[F]^{^{\circ\mathrm{p}}}_a\thicksim\varphi_{_{\scriptstyle a}}$.
\end{corollary2}

\begin{proof}
Define $F(x):=\varphi_{_{\scriptstyle x}}(x)$ for $x\in X\!\setminus\!\mathsf S$, where $\varphi_{_{\scriptstyle x}}(x):=f(x)$ if $\varphi_{_{\scriptstyle x}}\thicksim[f]_x$ for some usual element~$f$ at~$x\in X$.
\end{proof}

\subsection{Harmonious equivalences and up-harmonious equivalences}

Let $X$ and~$Y$ be sets and let $\lambda:Y\to X$, $y\mapsto x$ be a surjection. We consider pairs $(X,x)$ and $(Y,y)$, where $x\in X$ (resp.\ $y\in Y$) is a variable which traverses all elements in~$X$ (resp.~$Y$).  We say that $(X,x)$ is \textit{up-harmoniously equivalent} to $(Y,y)$ \textit{modulo}~$\lambda$, and $\lambda$ is called an \textit{up-harmonious mapping}.  If $\lambda$ is bijective then $(X,x)$ and $(Y,y)$ are said to be \textit{harmoniously equivalent} (with one another) \textit{modulo}~$\lambda$, where $\lambda$ is called a \textit{harmonious mapping}. Later, we simply use the terminology \textit{$($up-$)$harmonious} to replace ``(up-)harmoniously equivalent".

Suppose $X$ and~$Y$ are topological spaces. If further the surjection $\lambda:Y\to X$ is continuous then we say that $(X,x)$ is \textit{continuously up-harmonious} with $(Y,y)$ \textit{modulo}~$\lambda$, where $\lambda$ is called a \textit{continuously up-harmonious mapping} and if $\lambda$ is a homeomorphism then $(X,x)$ and $(Y,y)$ are said to be \textit{continuously harmonious} (with one another) \textit{modulo}~$\lambda$, where $\lambda$ is called a \textit{continuously harmonious mapping}.

In the case that $X$ and~$Y$ are traditional Riemann surfaces and the surjection $\lambda:Y\to X$ is analytic, we say that $(X,x)$ is \textit{analytically up-harmonious} with $(Y,y)$ \textit{modulo}~$\lambda$, where $\lambda$ is called an \textit{analytically up-harmonious mapping}. If $\lambda$ is biholomorphic then $(X,x)$ and $(Y,y)$ are said to be \textit{analytically harmonious} (with one another) \textit{modulo}~$\lambda$, where $\lambda$ is called an \textit{analytically harmonious mapping}.

Now suppose $(X,\mathscr T;\hat X,\hat{\mathscr T})$ and $(Y,\mathscr T';\hat Y,\hat{\mathscr T}')$ are two basic Riemann surfaces and $\hat\lambda:\hat Y\to\hat X$ is a surjection satisfying $\hat\lambda(Y)\subseteq X$ and $(\hat\lambda(Y),\mathscr T|_{\hat\lambda(Y)};\hat X,\hat{\mathscr T})$ is a subsurface of $(X,\mathscr T;\hat X,\hat{\mathscr T})$. We say that $(X,\mathscr T;\hat X,\hat{\mathscr T};x)$ (simply denoted $(\hat X,x)$, $x\in\hat X$) is \textit{analytically up-harmonious} with $(Y,\mathscr T';\hat Y,\hat{\mathscr T}';y)$ ($y\in\hat Y$) \textit{modulo}~$\hat\lambda$, denoted $(\hat X,x)\overset{\scriptscriptstyle\,\hat\lambda}{\looparrowright}(\hat Y,y)$, if $(\hat\lambda(Y),x)$ is analytically up-harmonious with $(Y, y)$ modulo~$\hat\lambda|_Y$ and $((\hat X,\hat{\mathscr T}),x)$ is continuously up-harmonious with $((\hat Y,\hat{\mathscr T}'),y)$ modulo~$\hat\lambda$, where $\hat\lambda$ is called an \textit{analytically up-harmonious mapping}. Here we use the notation ``$(\hat X,x)\overset{\scriptscriptstyle\,\hat\lambda}{\looparrowright}(\hat Y,y)$" (or ``$(\hat Y,y)\overset{\scriptscriptstyle\!\hat\lambda}{\looparrowleft}(\hat X,x)$") rather than ``$(\hat Y,y)\overset{\scriptscriptstyle\!\hat\lambda}{\looparrowright}(\hat X,x)$" because we consider that $(\hat X,x)$ may be ``pasted into" $(\hat Y,y)$ by~$\hat\lambda$. Usually we assume $\hat\lambda|_Y:\,Y\to\hat\lambda(Y)$ is an unbranched covering. If moreover $\hat\lambda|_{Y}:Y\to X$ is biholomorphic and $\hat\lambda:\hat Y\to\hat X$ is homeomorphic, then we say that $(\hat X,x)$ and $(\hat Y,y)$ are \textit{analytically harmonious} (with one another) \textit{modulo}~$\hat\lambda$, denoted $(\hat X,x)\overset{\scriptscriptstyle\hat\lambda}{\leftrightarrow}(\hat Y,y)$, where $\hat\lambda$ is called an \textit{analytically harmonious mapping}.

Consider a family~$\mathfrak X$ of pairs $(\hat X,x)$, where $\hat X$ is a basic Riemann surface and $x\in\hat X$. Suppose $\hat\varLambda$ is a set consisting of some analytic up-harmonious mappings, which satisfies that $\mathrm{id}_{\hat X}\in\hat\varLambda$ for all pairs $(\hat X,x)\in\mathfrak X$ (here $\mathrm{id}_A$ denotes the identity mapping from set~$A$ to itself) and that if $(\hat X,x)\overset{\scriptscriptstyle\hat\lambda_1}{\looparrowright}(\hat Y,y)$, $(\hat Y,y)\overset{\scriptscriptstyle\hat\lambda_2}{\looparrowright}(\hat Z,z)$ and $\hat\lambda_1$, $\hat\lambda_2\in\hat\varLambda$ then $\hat\lambda_1{\circ}\hat\lambda_2\in\hat\varLambda$. Then we call $\hat\varLambda$ an \textit{analytically up-harmonious relation} in~$\mathfrak X$. If for two pairs $(\hat X,x)$, $(\hat Y,y)\in\mathfrak X$ there is $\hat\lambda\in\hat\varLambda$ such that $(\hat X,x)\overset{\scriptscriptstyle\hat\lambda}{\looparrowright}(\hat Y,y)$, then we say that $(\hat X,x)$ is \textit{analytically up-harmonious} with $(\hat Y,y)$ \textit{modulo}~$\hat\varLambda$, denoted $(\hat X,x)\overset{\scriptscriptstyle\hat\varLambda}{\looparrowright}(\hat Y,y)$. Let
\begin{eqnarray*}
\hat\varLambda_0&:=&\{\hat\lambda\in\hat\varLambda:\,\hat\lambda:\hat Y\to\hat X \text{ is homeomorphic, }\\
&&\qquad\qquad \hat\lambda|_Y\!:Y\to X \text{ is biholomorphic and } \hat\lambda^{-1}\in\hat\varLambda\}.
\end{eqnarray*}
If $(\hat X,x)\overset{\scriptscriptstyle\hat\lambda}{\leftrightarrow}(\hat Y,y)$ for some $\hat\lambda\in\hat\varLambda_0$ then we say that $(\hat X,x)$ and $(\hat Y,y)$ are \textit{analytically harmonious} (with one another) \textit{modulo}~$\hat\varLambda_0$, denoted $(\hat X,x)\overset{\scriptscriptstyle\hat\varLambda_0}{\leftrightarrow}(\hat Y,y)$, where $\hat\varLambda_0$ is called an \textit{analytically harmonious} (\textit{equivalence}) \textit{relation}. We may attach additional conditions to the analytically (up-)harmonious relation (in the next subsection we add the \textit{base-preserving} condition).

For $\mathbf{Y}\subseteq\mathfrak X$ we define the \textit{analytically up-harmonious class} (of~$\mathbf Y$)
$$
\widetilde{\mathbf{Y}}:=\{(\hat X,x)\in\mathfrak X:\,\text{there exists } (\hat Y,y)\in\mathbf Y \text { such that } (\hat X,x)\overset{\scriptscriptstyle\hat\varLambda}{\looparrowright}(\hat Y,y)\}.
$$
If there is an element $(\hat Y,y)\in\widetilde{\mathbf{Y}}$ such that for all $(\hat X,x)\in\widetilde{\mathbf{Y}}$ we have $(\hat X,x)\overset{\scriptscriptstyle\hat\varLambda}{\looparrowright}(\hat Y,y)$, then we call $(\hat Y,y)$ a \textit{holographic element} of~$\widetilde{\mathbf{Y}}$.

When we emphasize the surface~$\hat X$ in the pair $(\hat X,x)$, we write $\hat X(x)$ instead of $(\hat X,x)$. We can define similar notions to the above for sets, topological spaces and traditional Riemann surfaces, respectively.

\subsection{Algebraic Riemann surfaces}

Suppose $(X,\mathscr T;\hat X,\hat{\mathscr T})$ is a universal topological space and $\mathtt I$ is the set of all ideal points. Suppose there is a partition $\mathtt I=\bigcup_{j\in J}\mathtt I_j$ of~$\mathtt I$, where $\mathtt I_j=\{\hat x_{jk}:\, k=1,\,\dots,\, k_j\}$ ($k_j$ ($j\in J$) are positive integers), and a topological space $(\bar{X},\bar{\mathscr T})$ such that $\bar{X}=X\cup\bar{\mathtt I}$, where $X\cap\bar{\mathtt I}=\emptyset$, $\bar{\mathtt I}=\{\bar{x}_j:\,j\in J\}$, $\bar{\mathscr T}|_X=\mathscr T$ and for every neighborhood~$N(\bar{x}_j)$ of~$\bar{x}_j$ in $(\bar{X},\bar{\mathscr T})$ ($j\in J$) there exist punctured partial neighborhoods $\hat N^{\circ}(\hat x_{jk})$ of~$\hat x_{jk}$ such that $\hat N^{\circ}(\hat x_{jk})\subseteq N(\bar{x}_j)$ for $k=1$, $\dots$, $k_j$. Then we say that $(\bar{X},\bar{\mathscr T})$ is a \textit{tied space} corresponding to $(X,\mathscr T;\hat X,\hat{\mathscr T})$ and $(X,\mathscr T;\hat X,\hat{\mathscr T})$ is an \textit{untied space} corresponding to $(\bar{X},\bar{\mathscr T})$. In this case we may write $(X,\mathscr T;\hat X,\hat{\mathscr T};\bar{X},\bar{\mathscr T})$, or simply $(\hat X,\bar X)$. Denote $(\hat X,\bar X)$ by~$\dot X$, which we call a \textit{universal topological space with a tied space}.

Suppose $(X,\mathscr T;\hat X,\hat{\mathscr T})$ is a basic Riemann surface and a mapping $\hat f:(\hat X,\check{\mathscr T})\to\hat{\mathbb C}$ is continuous, where $\check{\mathscr T}$ is the essential topology of~$\hat X$ and $\hat{\mathbb C}=\mathbb C\cup\{\infty\}$ denotes the extended complex plane. If $\hat f|_X$ is meromorphic then we call $\hat f$ (\textit{essentially}) \textit{para-meromorphic} on~$\hat X$. If $\hat f(\hat X)\subseteq\mathbb C$ and $\hat f|_X$ is holomorphic then we call $\hat f$ (\textit{essentially}) \textit{para-holomorphic} on~$\hat X$. If there exists a traditional Riemann surface $(\bar{X},\bar{\mathscr T})$, which is a tied space corresponding to $\hat X$ (we always assume $\bar X$ has a uniform complex structure with~$X$), and a meromorphic (resp.\ holomorphic) function~$\bar f$ on~$\bar X$ such that $\hat f|_X=\bar f|_X$, then we call $\dot f:=(\hat f,\bar f)$ or~$\hat f$ (\textit{essentially}) \textit{meromorphic} (resp.\ (\textit{essentially}) \textit{holomorphic}) on $\dot X=(\hat X,\bar X)$ or~$\hat X$. Define $\dot f|_{\hat X}:=\hat f$ and $\dot f|_{\bar X}:=\bar f$. We denote the set of all para-meromorphic (resp.\ para-holomorphic) functions on~$\hat X$ by~$\breve{\mathcal M}(\hat X)$ (resp.\ $\breve{\mathcal H}(\hat X)$) and the set of all meromorphic (resp.\ holomorphic) functions on $\dot X$ by $\mathcal M(\dot X)$ (resp.\ $\mathcal H(\dot X)$). Denote $\mathcal M(\hat X):=\{\dot f|_{\hat X}:\dot f\in\mathcal M(\dot X)\}$ and $\mathcal H(\hat X):=\{\dot f|_{\hat X}:\dot f\in\mathcal H(\dot X)\}$. For $\dot f=(\hat f,\bar f)$, $\dot g=(\hat g,\bar g)\in\mathcal M(\dot X)$ we define $\dot f+\dot g:=(\hat f+\hat g,\bar f+\bar g)$ and $\dot f\cdot\dot  g:=(\hat f\cdot\hat g,\bar f\cdot\bar g)$.

Suppose $(X,\mathscr T;\hat X,\hat{\mathscr T})$ and $(Y,\mathscr T';\hat Y,\hat{\mathscr T}')$ are basic Riemann surfaces and a mapping $\hat f:(\hat Y,\check{\mathscr T}')\to(\hat X,\check{\mathscr T})$ is continuous, where $\check{\mathscr T}$ and~$\check{\mathscr T}'$ are the essential topologies of~$\hat X$ and~$\hat Y$ respectively. If $\hat f(Y)\subseteq X$ and $\hat f|_Y:Y\to X$ is holomorphic then we call $\hat f:\hat Y\to\hat X$ \textit{essentially para-holomorphic}. If $\hat f:\hat Y\to\hat X$ is essentially para-holomorphic and partially continuous, then we call $\hat f$ (\textit{exactly}) \textit{para-holomorphic}. A mapping $\hat f:\hat Y\to\hat X$ is called (\textit{exactly}) (resp.\ \textit{essentially}) \textit{para-biholomorphic} if it is bijective and both $\hat f:\hat Y\to\hat X$ and $\hat f^{-1}:\hat X\to\hat Y$ are (exactly) (resp.\ essentially) para-holomorphic.

Suppose $\hat f:\hat Y\to\hat X$ is an essentially para-holomorphic mapping. If there exist Riemann surfaces $(\bar{X},\bar{\mathscr T})$ and $(\bar{Y},\bar{\mathscr T}')$, which are tied spaces corresponding to $\hat X$ and~$\hat Y$ respectively, and a holomorphic mapping~$\bar f:\bar Y\to\bar X$ such that $\hat f|_Y=\bar f|_Y$, then we say that $\dot f=(\hat f,\bar f)$ is \textit{essentially holomorphic} from $\dot Y$ to $\dot X$ and $\hat f:\hat Y\to\hat X$ is \textit{essentially holomorphic}. Define $\dot f|_{\hat Y}:=\hat f$ and $\dot f|_{\bar Y}:=\bar f$. If $\hat f:\hat Y\to\hat X$ is essentially holomorphic and partially continuous, then we call $\hat f$ and~$\dot f$ (\textit{exactly}) \textit{holomorphic}. A mapping $\dot f=(\hat f,\bar f):\dot Y\to\dot X$ is called (\textit{exactly}) (resp.\ \textit{essentially}) \textit{biholomorphic} if it is bijective (i.e.\ both $\hat f$ and~$\bar f$ are bijective) and both $\dot f:\dot Y\to\dot X$ and $\dot f^{-1}:=(\hat f^{-1},\bar f^{-1}):\dot X\to\dot Y$ are (exactly) (resp.\ essentially) holomorphic. If $\dot f:\dot Y\to\dot X$ is (exactly) (resp.\ essentially) biholomorphic then we also say $\hat f:\hat Y\to\hat X$ is (\textit{exactly}) (resp.\ \textit{essentially}) \textit{biholomorphic}.

Denote the set of all (resp.\ essentially) para-holomorphic mappings from $\hat Y$ to~$\hat X$ by $\hat{\mathcal H}(\hat Y\!\to\!\hat X)$ (resp.\ $\breve{\mathcal H}(\hat Y\!\to\!\hat X)$). Denote the set of all (resp.\ essentially) holomorphic mappings from $\dot Y$ to $\dot X$ by $\mathcal H(\dot Y\!\to\!\dot X)$ (resp.\  $\check{\mathcal H}(\dot Y\!\to\!\dot X)$). Denote $\mathcal H(\hat Y\!\to\!\hat X):=\{\dot f|_{\hat Y}:\dot f\in\mathcal H(\dot Y\!\to\!\dot X)\}$ and $\check{\mathcal H}(\hat Y\!\to\!\hat X):=\{\dot f|_{\hat Y}:\dot f\in\check{\mathcal H}(\dot Y\!\to\!\dot X)\}$. It is easy to see that $\hat f\in\breve{\mathcal H}(\hat Y\!\to\!\hat X)$ and $\hat g\in\breve{\mathcal H}(\hat Z\!\to\!\hat Y)$ imply $\hat f\circ\hat g\in\breve{\mathcal H}(\hat Z\!\to\!\hat X)$, that $\hat f\in\hat{\mathcal H}(\hat Y\!\to\!\hat X)$ and $\hat g\in\hat{\mathcal H}(\hat Z\!\to\!\hat Y)$ imply $\hat f\circ\hat g\in\hat{\mathcal H}(\hat Z\!\to\!\hat X)$, that $\dot f\in\check{\mathcal H}(\dot Y\!\to\!\dot X)$ and $\dot g\in\check{\mathcal H}(\dot Z\!\to\!\dot Y)$ imply $\dot f\circ\dot g\in\check{\mathcal H}(\dot Z\!\to\!\dot X)$ and that $\dot f\in\mathcal H(\dot Y\!\to\!\dot X)$ and $\dot g\in\mathcal H(\dot Z\!\to\!\dot Y)$ imply $\dot f\circ\dot  g\in\mathcal H(\dot Z\!\to\!\dot X)$ ($\dot f\circ\dot  g:=(\hat f\circ\hat g,\bar f\circ\bar g)$). Specially, we know that $\hat f\in\breve{\mathcal M}(\hat X)$ (resp.\ $\breve{\mathcal H}(\hat X)$) and $\hat g\in\breve{\mathcal H}(\hat Y\!\to\!\hat X)$ imply $\hat f\circ\hat g\in\breve{\mathcal M}(\hat Y)$ (resp.\ $\breve{\mathcal H}(\hat Y)$) and that $\dot f\in\mathcal M(\dot X)$ (resp.\ ${\mathcal H}(\dot X)$) and $\dot g\in\check{\mathcal H}(\dot Y\!\to\!\dot X)$ imply $\dot f\circ\dot  g\in\mathcal M(\dot Y)$ (resp.\ ${\mathcal H}(\dot Y)$).

\smallskip
\begin{remark}\label{remark2}
$\breve{\mathcal H}(\hat X)$, ${\mathcal H}(\hat X)$ and ${\mathcal H}(\dot X)$ are rings; ${\mathcal M}(\hat X)$ and ${\mathcal M}(\dot X)$ are fields.
\end{remark}
\smallskip

Suppose $(X,\mathscr T;\hat X,\hat{\mathscr T})$ and $(Y,\mathscr T';\hat Y,\hat{\mathscr T}')$ are basic Riemann surfaces and $\hat f:\hat Y\to\hat X$ is an essentially para-holomorphic mapping. Let
$$
\hat f^*:\,\breve{\mathcal M}(\hat X)\longrightarrow\breve{\mathcal M}(\hat Y)
$$
be defined by
$$
\hat f^*(\hat\varphi):=\hat\varphi\circ\hat f
$$
for $\hat\varphi\in\breve{\mathcal M}(\hat X)$. Then
$$
\hat f^*|_{\breve{\mathcal H}(\hat X)}:\,\breve{\mathcal H}(\hat X)\longrightarrow\breve{\mathcal H}(\hat Y)
$$
is a ring homomorphism. If $\dot f\in\check{\mathcal H}(\dot Y\!\to\!\dot X)$ then
$$
\dot f^*:\,\mathcal M(\dot X)\longrightarrow\mathcal M(\dot Y),
$$
defined by $\dot f^*(\dot\varphi):=\dot\varphi\circ\dot  f$,
and
$$
\dot f^*|_{\mathcal H(\dot X)}:\,\mathcal H(\dot X)\longrightarrow\mathcal H(\dot Y)
$$
are ring homomorphisms.

Suppose $(R,\mathscr T_0;\hat R,\hat{\mathscr T}_0)$ is a basic Riemann surface with $\hat R=R$ $($equal as sets, i.e.\ the ideal point set~$\mathtt I=\emptyset$ and so $\bar R=R)$. Suppose $(X,\mathscr T;\hat X,\hat{\mathscr T})$ is a basic Riemann surface and $(\bar{X},\bar{\mathscr T})$ is a traditional Riemann surface which is a tied space corresponding to~$\hat X$. Suppose $\dot\lambda=(\hat\lambda,\bar\lambda):\dot X\to\dot R$ is a holomorphic mapping such that $(\hat R,r)\overset{\scriptscriptstyle\,\hat\lambda}{\looparrowright}(\hat X,x)$ ($\dot R=(\hat R,\bar R)$ and $\dot X=(\hat X,\bar X)$). Then
$$
\dot\lambda^*:\,\mathcal M(\dot R)\longrightarrow\mathcal M(\dot X)
$$
is a ring monomorphism. Define
$$
\dot h\cdot\dot g:=(\dot\lambda^*\dot h)\cdot\dot g
$$
for $\dot h\in{\mathcal M}(\dot R)$ and $\dot g\in\mathcal M(\dot X)$. Then $\mathcal M(\dot X)$ is a vector space over ${\mathcal M}(\dot R)$. We also consider $\dot h\in\mathcal M(\dot R)$ as $\dot\lambda^*\dot h=\dot h\circ\dot\lambda$ and then consider $\mathcal M(\dot R)$ as a subfield of the field $\mathcal M(\dot X)$. So $\mathcal M(R)$ is a subfield of~$\mathcal M(\bar X)$ and $\mathcal M(\hat X)$. Consequently and similarly, the punctured partial stalk~${\mathcal M}_{\hat X,\,x}$ of the sheaf~${\mathcal M}$ of meromorphic functions on~$\hat X$ at $x\in\hat X$ is also a vector space over ${\mathcal M}(R)$ by defining
$$
h\cdot\langle\hat g\rangle_x:=\langle(h\circ\hat\lambda)\cdot\hat g\rangle_x
$$
for $h\in{\mathcal M}(R)$ and $\langle\hat g\rangle_x\in\mathcal M_{\hat X,\,x}$, and we may also consider ${\mathcal M}(R)$ as a subfield of~${\mathcal M}_{\hat X,\,x}$.

Let $\hat f:\hat Y\to\hat X$ be para-holomorphic. Suppose for $y\in\hat Y$ with $\hat f(y)=x\in\hat X$ there exists a punctured partial neighborhood~$\hat N^{\circ}(y)$ of~$y$ such that $x\notin\hat f(\hat N^{\circ}(y))$. Then the mapping
$$
\hat f^*:\,\breve{\mathcal M}_{\hat X,\,x}\longrightarrow\breve{\mathcal M}_{\hat Y\!,\,y}\, ,
$$
defined by $\hat f^*(\langle\hat g\rangle_x):=\langle\hat g\circ\hat f\rangle_y$, is a ring monomorphism. If further for every partial neighborhood~$\hat N(y)$ of~$y$ which is open in $(\hat Y,\check{\mathscr T}')$ there exists a punctured partial neighborhood~$\hat N_1(x)$ of~$x$ which is open in $(\hat X,\check{\mathscr T})$ such that $\hat f|_{\hat N(y)}:\,\hat N(y)\to\hat N_1(x)$ is para-biholomorphic, then $\hat f^*$ is an isomorphism. Let
$$
\hat f_*:\,\breve{\mathcal M}_{\hat Y\!,\,y}\longrightarrow\breve{\mathcal M}_{\hat X,\,x}
$$
be the \textit{inverse} of~$\hat f^*$.

Suppose $\hat p\in\breve{\mathcal H}(\hat Y\!\to\!\hat X)$, $\bar p\in{\mathcal H}(\bar Y\!\to\!\bar X)$ (the set of all holomorphic mappings from $\bar Y$ to~$\bar X$), $\hat P(t)=\hat c_0t^n+\hat c_1t^{n-1}+\cdots+\hat c_n\in\breve{\mathcal M}(\hat X)[t]$ and $\bar P(t)=\bar c_0t^n+\bar c_1t^{n-1}+\cdots+\bar c_n\in\mathcal M(\bar X)[t]$, where $\mathcal M(\bar X)$ denotes the set of all meromorphic functions on~$\bar X$. Define
$$
(\hat p^*\hat P)(t):=(\hat p^*\hat c_0)t^n+(\hat p^*\hat c_1)t^{n-1}+\cdots+(\hat p^*\hat c_n),
$$
and
$$
(\bar p^*\bar P)(t):=(\bar p^*\bar c_0)t^n+(\bar p^*\bar c_1)t^{n-1}+\cdots+(\bar p^*\bar c_n),
$$
which are in $\breve{\mathcal M}(\hat Y)[t]$ and ${\mathcal M}(\bar Y)[t]$, respectively. Suppose $\dot p=(\hat p,\bar p)\in{\mathcal H}(\dot Y\!\to\!\dot X)$ and $\dot P(t)=\dot c_0t^n+\dot c_1t^{n-1}+\cdots+\dot c_n\in\mathcal M(\dot X)[t]$, where we may write $\dot P(t)=(\hat P(t),\bar P(t))$ and denote $\dot P|_{\hat Y}:=\hat P$ and $\dot P|_{\bar Y}:=\bar P$. Define
$$
(\dot p^*\dot P)(t):=(\dot p^*\dot c_0)t^n+(\dot p^*\dot c_1)t^{n-1}+\cdots+(\dot p^*\dot c_n),
$$
i.e.
$$
(\dot p^*\dot P)(t):=((\hat p^*\hat P)(t),(\bar p^*\bar P)(t)),
$$
which is in ${\mathcal M}(\dot Y)[t]$. If $\hat p:\hat Y\to\hat X$ is a (resp.\ an essentially) holomorphic ($n$-sheeted) covering map and $\bar p:\bar Y\to\bar X$ is a branched holomorphic ($n$-sheeted) covering map, then we say that $\dot p:\dot Y\to\dot X$ is a (or an \textit{exactly}) (resp.\ an  \textit{essentially}) \textit{holomorphic} (\textit{$n$-sheeted}) \textit{covering map}.

\smallskip
\begin{remark}\label{remark3}
Suppose $\dot f:=(\hat f,\bar f)$ is meromorphic on $\dot Y=(\hat Y,\bar Y)$. Then $(\hat p^*\hat P)(\hat f)=0$ if and only if $(\bar p^*\bar P)(\bar f)=0$. In this case, $(\dot p^*\dot P)(\dot f)=((\hat p^*\hat P)(\hat f),(\bar p^*\bar P)(\bar f))=0$.
\end{remark}

\begin{theorem}\label{theorem3.3}
Suppose $(R,\mathscr T_0;\hat R,\hat{\mathscr T}_0)$ is a basic Riemann surface determined by a perfect filterbase structure system~$\mathfrak B$ whose elements consist of simply connected domains with $\hat R=R$ and $P(t)\in{\mathcal M}(R)[t]$ is an irreducible monic polynomial of degree~$n$ $($here $\bar P(t)=\hat P(t)=P(t))$. Then there exists a basic Riemann surface $\dot S=(S,\mathscr T;\hat S,\hat{\mathscr T};\bar S,\bar{\mathscr T})$, a holomorphic $n$-sheeted covering map $\dot p:\dot S\to\dot R$ and a meromorphic function $\dot F\in\mathcal M(\dot S)$ such that $(\dot p^*\dot P)(\dot F)=0$. We call $\dot F$ a \textit{basic algebraic function} over~$\dot R$ $($or~$\hat R$ or~$R)$ with domain $\dot S=(\hat S,\bar S)$, denoted by $(\dot S,\dot p,\dot F)$. If $(\dot Z,\dot q,\dot G)$
has the corresponding properties,
then there exists exactly one fiber-preserving biholomorphic mapping $\dot\sigma:\dot Z\to\dot S$ $($i.e.\ $\dot p\circ\dot\sigma=\dot q)$ such that $\dot G=\dot\sigma^*\dot F$.
\end{theorem}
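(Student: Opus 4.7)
The plan follows the classical construction of the Riemann surface of an algebraic function (cf.\ \cite[Theorems~(8.9)--(8.10)]{Fo}) but carried out inside the basic Riemann surface framework of this paper. Let $A\subset R$ be the critical set of $P$: the union of the pole set of the coefficients $\hat c_0,\dots,\hat c_n$ and the zero set of the discriminant of $P$. Since $P$ is an irreducible monic polynomial with meromorphic coefficients, $A$ is closed and quasi-discrete in $R$. Over $R':=R\setminus A$, the implicit function theorem applied to $\partial_t P$ produces, at every $x_0\in R'$ and for each of the $n$ distinct roots $t_0$ of $P(x_0,t)=0$, a unique holomorphic germ $\varphi$ at $x_0$ with $\varphi(x_0)=t_0$ and $P(\varphi)=0$.

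First I would introduce the subpresheaf $\mathcal F\subseteq\mathcal H$ on $R$ defined by $\mathcal F(U):=\{f\in\mathcal H(U): P(f)=0\text{ on }U\}$ and pass, by Theorem~\ref{theorem2.1} (or its porous analogue, Theorem~2.1$''$, with partial point set $A$), to the derived universal topological space $({\mathfrak F}^{^{\circ\mathrm p}},{\mathscr T}^{^{\circ\mathrm p}}(\mathcal F);\hat{\mathfrak F}^{^{\mathrm{po}}},\hat{\mathscr T}^{^{\mathrm{po}}}(\mathcal F))$. Let $S^\circ$ be any connected component lying over $R'$; irreducibility of $P$ forces $S^\circ$ to be, via the projection $\overset{\scriptscriptstyle\circ}{p}$, an unramified $n$-sheeted covering of $R'$, by the standard monodromy/analytic-continuation argument together with Theorem~\ref{theorem3.2}: the subsheaf generated by any single root is closed under the monodromy action of $\pi_1(R')$, whose orbit on the $n$ roots must be transitive because the invariant polynomial obtained from any nontrivial subset of roots would be a proper monic factor of $P$ in $\mathcal M(R)[t]$.

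Second I would construct the ideal-point stratum and the tied space $\bar S$ simultaneously. For each $b\in A$, the monodromy permutation $\sigma_b\in\mathfrak S_n$ on the $n$ sheets of $S^\circ$ decomposes into cycles; for each cycle of length $k$, adjoin one ideal class $\mathtt I_j=\{\hat x_{jk}:k=1,\dots,k_j\}$ to $\hat S$ (one ideal point per sheet in the cycle) together with a single point $\bar x_j\in\bar S$, and use the standard Puiseux parametrization $\zeta\mapsto b+\zeta^{k_j}$ (in a local coordinate) to define the complex chart at $\bar x_j$ as well as the basic filter base at each $\hat x_{jk}$, namely the punctured-disk filter base pulled back to the relevant sheet. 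This yields a perfect filterbase structure system on $S^\circ$ whose derived universal topological space is $\hat S$, while the classical Puiseux construction produces a traditional Riemann surface $\bar S$ that tautologically becomes a tied space corresponding to $\hat S$ in the sense of Subsection~3.4. The projection $\dot p=(\hat p,\bar p)$ extends the étale projection on $S^\circ$ by sending each $\hat x_{jk}$ to $b_j$ and is, by construction, a holomorphic $n$-sheeted covering map. The function $\dot F=(\hat F,\bar F)$, defined on $S^\circ$ by $\hat F(\langle f\rangle_x):=f(x)$ and extended to $\bar S$ as the tautological meromorphic function from the Puiseux expansions, satisfies $(\dot p^*\dot P)(\dot F)=0$ tautologically on $S^\circ$, hence on $\dot S$ by Remark~\ref{remark3}.

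For uniqueness, given $(\dot Z,\dot q,\dot G)$, define $\dot\sigma:\dot Z\to\dot S$ by sending a point $z\in Z':=\dot q^{-1}(R')$ to the germ $[\hat G]_z$ pushed forward by the local biholomorphism $\hat q$ (this is well-defined because $\hat G$ satisfies $P=0$ near $z$), then extend continuously to ideal points and to $\bar Z$. Fiber preservation $\dot p\circ\dot\sigma=\dot q$ and the identity $\dot G=\dot\sigma^*\dot F$ hold by construction on the dense set $Z'$ and extend globally by continuity together with the uniqueness condition on $\mathcal H$ (Theorem~\ref{theorem2.2}). Bijectivity of $\dot\sigma$ follows because both coverings have the same generic fibers, namely the roots of $P$; biholomorphy is automatic since both projections are local biholomorphisms off the branch locus and the Puiseux parametrizations at branch points are intrinsic to $P$. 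Uniqueness of $\dot\sigma$ is forced by the requirement $\dot G=\dot\sigma^*\dot F$ on the dense set $Z'$. The main obstacle, in my view, is not any of these classical steps but the bookkeeping of the first two: one must verify carefully that the filter bases attached to distinct $\hat x_{jk}\in\mathtt I_j$ are pairwise inequivalent (so that no two ideal points are identified), that the resulting $\hat S$ is Hausdorff, and that the tied-space axioms of Subsection~3.4 relating $\hat S$ and $\bar S$ hold uniformly across all branch points — a verification that hinges on the Puiseux local model and on the porous framework of Subsection~2.5.
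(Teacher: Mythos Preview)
Your approach is correct and classical, but the paper takes a more streamlined route that sidesteps precisely the bookkeeping you flag at the end. Rather than working with the subpresheaf $\mathcal F$ of solutions over $R'=R\setminus A$ and then adjoining ideal points by hand via monodromy cycles and Puiseux parametrizations, the paper works with the full sheaf $\mathcal H$, forms the derived universal topological space $(\mathfrak H^\circ,\mathscr T^\circ(\mathcal H);\hat{\mathfrak H},\hat{\mathscr T}(\mathcal H))$ once and for all via Theorems~\ref{theorem2.1} and~\ref{theorem2.2}, and then simply sets $\hat S:=\{\hat\varphi\in\hat{\mathfrak H}:P(\hat\varphi)=0\}$ and $S:=\hat S\cap\mathfrak H^\circ$ with the induced topologies and the induced filterbase system $\mathfrak N(\mathcal H(\mathscr B))|_S$. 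Because the filter bases in $\mathfrak B$ consist of simply connected domains, a solution of $P(f)=0$ on a basic punctured partial neighborhood of a branch point is exactly a single branch; such a germ is an incomplete punctured partial germ and hence automatically an ideal point of $\hat S$ --- no separate adjunction step, no inequivalence check between the $\hat x_{jk}$, and no Hausdorff verification is required, since all of this is inherited from $\hat{\mathfrak H}$. The tied space $\bar S$ is then produced by a direct appeal to the reasoning of \cite[Theorem~(8.9)]{Fo}, and the uniqueness argument is essentially yours (the paper writes it as $\hat\sigma(z):=\hat q_*\hat G_z$, then extends to $\bar\sigma$ via \cite{Fo}). What your explicit Puiseux construction buys is concreteness and independence from the Section~2 machinery; what the paper's route buys is that the ``main obstacle'' you identify simply does not arise, because the universal topological space of germs was designed to absorb it.
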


\begin{proof}
Let $\mathcal H$ be the sheaf of holomorphic functions on~$R$. Then by Theorems~\ref{theorem2.1} and~\ref{theorem2.2} we obtain a Hausdorff universal topological space $(\mathfrak H^{\circ},\mathscr T^{\circ}(\mathcal H);\hat{\mathfrak H},\hat{\mathscr T}(\mathcal H))$. Let
$$
\hat S:=\{\hat\varphi\in\hat{\mathfrak H}:\,P(\hat\varphi)=0\}
$$
and
$$
S:=\hat S\cap\mathfrak H^{\circ}.
$$
Let
$$
\mathscr T:=\mathscr T^{\circ}(\mathcal H)|_S:=\mathscr T^{\circ}(\mathcal H)\cap S
$$
and
$$
\hat{\mathscr T}:=\hat{\mathscr T}(\mathcal H)|_{\hat S}:=\hat{\mathscr T}(\mathcal H)\cap{\hat S}.
$$
Then $(S,\mathscr T;\hat S,\hat{\mathscr T})$ is a Hausdorff universal topological space determined by the perfect filterbase structure system
$$
\mathfrak N(\mathcal H(\mathscr B))|_S:=\{\mathcal N:\,\mathcal N\in\mathfrak N(\mathcal H(\mathscr B)) \text{ and every }N\in\mathcal N \text{ is a subset of~}S\}
$$
induced by $\mathfrak N(\mathcal H(\mathscr B))$ (see~(\ref{2.1})) on~$S$. Let
$$
\hat p:\hat S\to\hat R
$$
be the projection. Then evidently we see that $\hat p$ is an (exact) $n$-sheeted covering map. It is also evident to see that $\hat p\in\hat{\mathcal H}(\hat S\!\to\!\hat R)$. Define
$$
\hat F(\hat\varphi):=\hat\varphi(r):=\lim_{\gamma{\to}r \textrm{ in }{\scriptscriptstyle\hat{\mathscr T}}}\hat f(\gamma)
$$
for $\hat\varphi=\langle\hat f\rangle_r\in\hat S$. Then $\hat F\in\breve{\mathcal M}(\hat S)$ and $(\hat p^*\hat P)(\hat F)=0$.

By reasoning similar to \cite[Theorem~(8.9)]{Fo} we can obtain a traditional Riemann surface~$\bar S$, which is a tied space corresponding to~$\hat S$, and $\bar F\in\mathcal M(\bar S)$ with $(\bar p^*\bar P)(\bar F)=0$ and $\bar F|_S=\hat F|_S$, where the projection $\bar p:\bar S\to R$ is a branched holomorphic $n$-sheeted covering map with $\bar p|_S=\hat p|_S$. Hence $\dot F:=(\hat F,\bar F)\in\mathcal M(\dot S)$ and $\dot p:=(\hat p,\bar p)\in\mathcal H(\dot S\!\to\!\dot R)$, where $\dot S=(\hat S,\bar S)$.

For $z\in\hat Z$ let $\hat q(z)=r$ and $\hat\varphi:=\hat q_*\hat G_z$, where $\hat G=\dot G|_{\hat Z}$ and $\hat G_z$ denotes the partial germ of~$\hat G$ at~$z$. Then $\hat P(\hat\varphi)=0$. Hence $\hat\varphi\in\hat S$ and $\hat p(\hat\varphi)=r$. Define $\hat\sigma:\hat Z\to\hat Y$ by $\hat\sigma(z)=\hat\varphi$ ($z\in\hat Z$). Then $\hat\sigma$ is fiber-preserving and $\hat G=\hat\sigma^*\hat F$. Easily we see that $\hat\sigma$ is continuous. According to the reasoning in \cite[Theorem~(8.9)]{Fo}, $\hat\sigma|_Z$ can be extended to a fiber-preserving biholomorphic mapping $\bar\sigma:\bar Z\to\bar Y$ such that $\bar\sigma|_Z=\hat\sigma|_Z$ and $\bar G=\bar\sigma^*\bar F$. Let $\dot\sigma=(\hat\sigma,\bar\sigma)$. It is also easy to see that $\hat\sigma$ is bijective and $\hat\sigma^{-1}$ is continuous. Therefore, $\dot\sigma$ is biholomorphic. Obviously we have $\dot G=\dot\sigma^*\dot F$ and the mapping~$\dot\sigma$ is uniquely determined by this relation.
\end{proof}

We call $\dot Z=(\hat Z,\bar Z)$ (or~$\hat Z$) in Theorem~\ref{theorem3.3} a \textit{basic algebraic Riemann surface} over~$\hat R$ (or~$R$) and $\dot S=(\hat S,\bar S)$ (or~$\hat S$) in the proof of Theorem~\ref{theorem3.3} the \textit{original basic algebraic Riemann surface} over~$\hat R$ (or~$R$) (determined by $\hat P(t)$). We call $\dot G$ (or~$\hat G$) (resp.~$\dot F$ (or~$\hat F$)) in (resp.\ the proof of) Theorem~\ref{theorem3.3} a (resp.\ the \textit{original}) \textit{basic function} on~$\dot Z$ (or~$\hat Z$) (resp.\ on~$\dot S$ (or~$\hat S$)). We call $\hat R$ a \textit{base surface}. The holomorphic covering maps~$\dot q$ and~$\dot p$ in Theorem~\ref{theorem3.3} and its proof are called \textit{canonical} or \textit{natural} \textit{projections}.

Suppose $\dot Z_1$ and~$\dot Z_2$ are basic algebraic Riemann surfaces over a base surface~$\hat R$ and suppose $\dot p_1$ and~$\dot p_2$ are canonical projections from $\dot Z_1$ and~$\dot Z_2$ to $\dot R=(\hat R,\bar R)$, respectively. Then a mapping $\dot\lambda=(\hat\lambda,\bar\lambda):\dot Z_2\to\dot Z_1$ (resp.\ $\hat\lambda$, $\bar\lambda$) satisfying $\dot p_1\circ\dot\lambda=\dot p_2$ (resp.\ $\hat p_1\circ\hat\lambda=\hat p_2$, \,$\bar p_1\circ\bar\lambda=\bar p_2$) is said to be \textit{base-preserving}.

\smallskip
\begin{remark}\label{remark4}
The topological space $(\hat Z,\check{\mathscr T}')$ in Theorem~\ref{theorem3.3}, where $\check{\mathscr T}'$ is the essential topology of~$\hat Z$, is connected and path-connected. Therefore, by Lemma~\ref{lemma3.1} we see that any two punctured partial germs in the original basic algebraic Riemann surface~$\hat S$ are analytic continuations along some curve in~$R$ from one another.
\end{remark}
\smallskip

Suppose $(R,\mathscr T_0;\hat R,\hat{\mathscr T}_0)$ is a base surface with $\hat R=R$. Let $\hat\varphi\in\hat{\mathfrak H}$ be a punctured partial germ satisfying $P(\hat\varphi)=0$, where $P(t)\in\mathcal M(R)[t]$ is an irreducible monic polynomial of degree~$n$ (called the \textit{minimal polynomial} of~$\hat\varphi$ in $\mathcal M(R)$ or in~$\hat R$). Here $\hat\varphi$ is called an \textit{algebraic punctured partial germ} of degree~$n$ on~$\hat R$. We call basic algebraic Riemann surfaces and the original basic algebraic Riemann surface determined by $P(t)$ \textit{basic algebraic Riemann surfaces} and the \textit{original basic algebraic Riemann surface} determined by $\hat\varphi$, respectively.


By \cite[Theorem~(8.12)]{Fo} (refer to its proof) we have

\begin{lemma}\label{lemma3.4}
Suppose $\dot S=(\hat S,\bar S)$ is an original basic algebraic Riemann surface over a base surface~$\hat R$ and $\dot F$ is the original basic function on~$\dot S$. If $\dot f$ is a meromorphic function on~$\dot S$, then there exists a polynomial $\dot Q(t)\in\mathcal M(\dot R)[t]$ $(\dot R=(\hat R,\bar R))$ such that $\dot f=(\dot p^*\dot Q)(\dot F)$, where $\dot p:\dot S\to\dot R$ is the canonical projection, i.e.\ $\dot f=\dot Q(\dot F)$.
\hfill$\square$
\end{lemma}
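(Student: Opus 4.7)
The plan is to reduce the statement to its classical counterpart for traditional Riemann surfaces, \cite[Theorem~(8.12)]{Fo}, applied to the ``tied'' part $\bar p\colon\bar S\to\bar R$ of the data, and then to transfer the resulting identity to the ``untied'' part $\hat p\colon\hat S\to\hat R$ by a continuity-and-density argument on the essential topology.

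By the construction in the proof of Theorem~\ref{theorem3.3}, $\bar p\colon\bar S\to\bar R=R$ is a branched holomorphic $n$-sheeted covering of connected traditional Riemann surfaces, and $\bar F\in\mathcal M(\bar S)$ satisfies $(\bar p^*\bar P)(\bar F)=0$ with $\bar P(t)=P(t)\in\mathcal M(R)[t]$ irreducible of degree $n$. Writing $\dot f=(\hat f,\bar f)$, Forster's theorem applies directly to $\bar f\in\mathcal M(\bar S)$ and produces a polynomial $\bar Q(t)=\bar c_{n-1}t^{n-1}+\cdots+\bar c_0\in\mathcal M(R)[t]$ of degree less than $n$ with $\bar f=(\bar p^*\bar Q)(\bar F)$. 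Since $\hat R=R$ (so the ideal point set of $\hat R$ is empty and $\mathcal M(\hat R)=\mathcal M(R)=\mathcal M(\bar R)$), each $\bar c_i$ may simultaneously be regarded as $\hat c_i:=\bar c_i\in\mathcal M(\hat R)$, giving $\dot c_i:=(\hat c_i,\bar c_i)\in\mathcal M(\dot R)$; set $\dot Q(t):=\dot c_{n-1}t^{n-1}+\cdots+\dot c_0\in\mathcal M(\dot R)[t]$.

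It remains to prove $\hat f=(\hat p^*\hat Q)(\hat F)$ on all of $\hat S$, since this, combined with the equality already established on $\bar S$, yields $\dot f=(\dot p^*\dot Q)(\dot F)$. On the body $S=\hat S\cap\mathfrak H^\circ$ one has $\hat f|_S=\bar f|_S$, $\hat F|_S=\bar F|_S$ and $\hat p|_S=\bar p|_S$, so both $\hat f$ and $(\hat p^*\hat Q)(\hat F)$ coincide with $\bar f|_S$ there. Both sides are essentially para-meromorphic on $\hat S$, hence continuous as maps $(\hat S,\check{\mathscr T})\to\hat{\mathbb C}$; since $\hat{\mathbb C}$ is Hausdorff, the two agree globally provided $S$ is dense in $(\hat S,\check{\mathscr T})$. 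This density is the main technical point: by the construction of $\hat S$ in Theorem~\ref{theorem3.3}, each ideal point is an incomplete punctured partial germ realised as the limit of a filter base from $\mathfrak N(\mathcal H(\mathscr B))|_S$, so every essential neighborhood of it contains a nonempty open subset of $S$. Once the density is verified, the lemma follows at once.
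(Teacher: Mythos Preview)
Your proposal is correct and follows essentially the same approach as the paper, which simply records the lemma as an immediate consequence of \cite[Theorem~(8.12)]{Fo} without further detail. Your explicit density-and-continuity argument transferring the identity from $\bar S$ to $\hat S$ via the common body $S$ is precisely the verification one needs to fill in (and is in the spirit of Remark~\ref{remark3}); it is sound as stated.
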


Let $\mathcal A$ (resp.~$\mathcal A^0$) denote the set of all (resp.\ original) basic algebraic Riemann surfaces over~$\hat R$. We consider a pair $(\dot Z,\zeta)$, where $\dot Z=(\hat Z,\bar Z)\in\mathcal A$ and $\zeta$ is a variable in~$\hat Z$. Let
$$
\mathcal A(\zeta):=\{(\dot Z,\zeta):\,\dot Z=(\hat Z,\bar Z)\in\mathcal A \text{ and } \zeta \text{ is a variable in } \hat Z\}
$$
and
$$
\mathcal A^0(\xi):=\{(\dot S,\xi):\,\dot S=(\hat S,\bar S)\in\mathcal A^0 \text{ and } \xi \text{ is a variable in } \hat S\}.
$$

Let $(\dot S_1,\xi_1)$, $(\dot S_2,\xi_2)\in\mathcal A^0(\xi)$. We say that $(\dot S_1,\xi_1)$ is \textit{directly up-harmonious} with $(\dot S_2,\xi_2)$ if there is a polynomial $Q(t)\in\mathcal M(R)[t]$ such that $\xi_1=Q(\xi_2)$ and $\hat S_1=Q(\hat S_2)$, denoted $(\dot S_1,\xi_1)\overset{\scriptscriptstyle\,Q}{\looparrowright}(\dot S_2,\xi_2)$. If $(\dot S_1,\xi_1)$ and $(\dot S_2,\xi_2)$ are directly up-harmonious with one another then we say that they are \textit{directly harmonious} (with one another), denoted $(\dot S_1,\xi_1){\leftrightarrow}(\dot S_2,\xi_2)$. Let  $(\dot Z_1,\zeta_1)$, $(\dot Z_2,\zeta_2)\in\mathcal A(\zeta)$. If there exists a holomorphic base-preserving surjection $\dot\lambda:\dot Z_2\to\dot Z_1$ with $\hat\lambda(\zeta_2)=\zeta_1$ then we say that $(\dot Z_1,\zeta_1)$ is \textit{analytically up-harmonious} with $(\dot Z_2,\zeta_2)$ modulo~$\dot\lambda$ (or~$\hat\lambda$), denoted $(\dot Z_1,\zeta_1)\overset{\scriptscriptstyle\,\dot\lambda}{\looparrowright}(\dot Z_2,\zeta_2)$, where $\dot\lambda$ and~$\hat\lambda$ are called \textit{analytically up-harmonious mappings}. If further $\dot\lambda$ is biholomorphic, then we say that $(\dot Z_1,\zeta_1)$ and $(\dot Z_2,\zeta_2)$ are \textit{analytically harmonious} (with one another) modulo~$\dot\lambda$ (or~$\hat\lambda$), denoted $(\dot Z_1,\zeta_1)\overset{\scriptscriptstyle\dot\lambda}{\leftrightarrow}(\dot Z_2,\zeta_2)$, where $\dot\lambda$ and~$\hat\lambda$ are called \textit{analytically harmonious mappings}.

\begin{proposition}\label{proposition3.5}
Let $(\dot S_1,\xi_1)$, $(\dot S_2,\xi_2)\in\mathcal A^0(\xi)$. Then $(\dot S_1,\xi_1)$ is analytically up-harmonious with $(\dot S_2,\xi_2)$ modulo~$\dot\lambda$ if and only if $(\dot S_1,\xi_1)$ is directly up-harmonious with $(\dot S_2,\xi_2)$. In this case, $\dot\lambda:\dot S_2\to\dot S_1$ is a holomorphic covering map $($i.e.\ $\hat\lambda:\hat S_2\to\hat S_1$ is an exact covering map and $\bar\lambda:\bar S_2\to\bar S_1$ is a holomorphic branched covering map$)$.
\end{proposition}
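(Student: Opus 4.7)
The plan is to use Lemma~\ref{lemma3.4} as the lever in both directions and to pass between the function-level datum $\dot F_j$ and the germ-level datum $\xi_j$ by exploiting that $\hat p_j$ is a local homeomorphism at every unbranched point of~$\hat S_j$.

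For $(\Rightarrow)$, assume $\dot\lambda:\dot S_2\to\dot S_1$ is holomorphic, base-preserving, and surjective, with $\hat\lambda(\xi_2)=\xi_1$. Then $\dot\lambda^*\dot F_1\in\mathcal M(\dot S_2)$, so Lemma~\ref{lemma3.4} provides $\dot Q(t)\in\mathcal M(\dot R)[t]=\mathcal M(R)[t]$ with $\dot\lambda^*\dot F_1=\dot Q(\dot F_2)$. Read this on a basic open $\langle V,h\rangle\subseteq\hat S_2$ around an unbranched $\eta_0=\langle h\rangle_r$: after shrinking $V$ one has $\hat\lambda(\langle V,h\rangle)=\langle V,g\rangle\subseteq\hat S_1$ for some $g\in\mathcal M(V)$ representing $\hat\lambda(\eta_0)$, and on such a set $\hat F_2$ coincides with $h\circ\hat p_2$ while $\hat\lambda^*\hat F_1$ coincides with $g\circ\hat p_2$; the equation $\dot\lambda^*\dot F_1=\dot Q(\dot F_2)$ then reduces to $g=Q(h)$ on~$V$, whence $\hat\lambda(\langle h\rangle_s)=\langle Q(h)\rangle_s=:Q(\langle h\rangle_s)$ for every $s\in V$. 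Thus $\hat\lambda$ and $\eta\mapsto Q(\eta)$ are base-preserving continuous maps $\hat S_2\to\hat S_1$ that agree on a nonempty open set; since $\hat p_1$ is a local homeomorphism and $\hat S_1$ is Hausdorff, the agreement locus is also closed, so connectedness of~$\hat S_2$ (Remark~\ref{remark4}) propagates it to all of~$\hat S_2$. Taking $\eta=\xi_2$ gives $\xi_1=Q(\xi_2)$, and surjectivity of~$\hat\lambda$ gives $\hat S_1=Q(\hat S_2)$.

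For $(\Leftarrow)$, given $Q\in\mathcal M(R)[t]$ with $\xi_1=Q(\xi_2)$ and $\hat S_1=Q(\hat S_2)$, define $\hat\lambda:\hat S_2\to\hat S_1$ by $\hat\lambda(\langle h\rangle_r):=\langle Q(h)\rangle_r$, where $Q(h)$ denotes substitution into~$Q$ with its coefficients regarded as meromorphic germs at~$r$. This map is well-defined into~$\hat S_1$ by hypothesis, base-preserving, surjective, and sends $\xi_2$ to~$\xi_1$. On each sufficiently small basic open $\langle V,h\rangle\subseteq\hat S_2$ it factors as the composition $(\hat p_1|_{\langle V,Q(h)\rangle})^{-1}\!\circ(\hat p_2|_{\langle V,h\rangle})$ of exact homeomorphisms and is therefore para-holomorphic; the extension to a holomorphic $\bar\lambda:\bar S_2\to\bar S_1$ giving $\dot\lambda\in\mathcal H(\dot S_2\to\dot S_1)$ proceeds by the branched-covering extension argument from the proof of Theorem~\ref{theorem3.3} (cf.\ \cite[Thm.~(8.9)]{Fo}), and the same argument shows that $\dot\lambda$ is a holomorphic covering map. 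The main obstacle is the step in the forward direction of passing from the value-level identity $\dot\lambda^*\dot F_1=\dot Q(\dot F_2)$ to the germ-level identity $\hat\lambda=Q$: because $\hat F_j$ records only numerical values, one must use the local-homeomorphism property of $\hat p_j$ at unbranched points to lift an identity of values on a punctured neighborhood of a base point to an identity of germs, after which connectedness of~$\hat S_2$ and Hausdorffness of~$\hat S_1$ close the argument globally.
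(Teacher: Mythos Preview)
Your proof is correct and follows essentially the same approach as the paper: both directions hinge on Lemma~\ref{lemma3.4} applied to $\dot\lambda^*\dot F_1\in\mathcal M(\dot S_2)$, and the converse is handled identically by defining $\hat\lambda(\eta)=Q(\eta)$ and invoking the branched-covering extension as in Theorem~\ref{theorem3.3} and \cite[Theorems~(8.4), (8.9)]{Fo}. The one difference is that in the forward direction the paper carries out the local computation directly at the given (possibly branched) germ $\xi_2=\langle f_2\rangle_r$ by working on a punctured partial neighborhood~$V$ of~$r$, which immediately yields $f_1=Q(f_2)$ on~$V$ and hence $\xi_1=Q(\xi_2)$; since this works for every $\xi_2\in\hat S_2$, no connectedness argument is needed. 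Your route via an auxiliary unbranched point plus open--closed propagation is fine, but note a small imprecision: before the connectedness step you assert that $\eta\mapsto Q(\eta)$ is a map $\hat S_2\to\hat S_1$, which is part of what you are trying to show; the clean fix is to view both $\hat\lambda$ and $Q$ as continuous maps into the ambient Hausdorff space~$\hat{\mathfrak H}$ (or the meromorphic analogue) and run the argument there.
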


\begin{proof}
Suppose $\dot\lambda=(\hat\lambda,\bar\lambda)$ is the analytically up-harmonious mapping from $\dot S_2=(\hat S_2,\bar S_2)$ to~$\dot S_1=(\hat S_1,\bar S_1)$. Suppose $\xi_1\in\hat S_1$, $\xi_2\in\hat S_2$ and $\xi_1=\hat\lambda(\xi_2)$. Then there exists a punctured partial neighborhood~$V$ of $r=\hat p_1(\xi_1)$ in~$\hat R$ (if necessary we will shrink~$V$), where $\dot p_1=(\hat p_1,\bar p_1)$ is the canonical projection from $\dot S_1$ to~$\dot R$, and $f_1$, $f_2\in\mathcal H(V)$ such that $\xi_1=\langle f_1\rangle_r$ and $\xi_2=\langle f_2\rangle_r$. Suppose $\dot F_1=(\hat F_1,\bar F_1)$ and~$\dot F_2=(\hat F_2,\bar F_2)$ are the original basic functions on~$\dot S_1$ and~$\dot S_2$, respectively. Then by Lemma~\ref{lemma3.4}, there exists a polynomial $Q(t)\in\mathcal M(R)[t]$ such that $\bar F_1\circ\bar\lambda=((\bar p_1\circ\bar\lambda)^*Q)(\bar F_2)$. Since $\hat\lambda$ is base-preserving and partially continuous, we have $\bar\lambda(\langle f_2\rangle_{r'})=\langle f_1\rangle_{r'}$ for $r'\in V$. Therefore,  it follows that
$$
f_1(r')=\bar F_1(\langle f_1\rangle_{r'})=(\bar F_1\circ\bar\lambda)(\langle f_2\rangle_{r'})=(((\bar p_1\circ\bar\lambda)^*Q)(\bar F_2))(\langle f_2\rangle_{r'})=(Q(f_2))(r').
$$
Then we get
$$
\xi_1=\langle f_1\rangle_r=\langle Q(f_2)\rangle_r=Q(\xi_2).
$$

Suppose there is a polynomial $Q(t)\in\mathcal M(\hat R)[t]$ such that $\xi_1=Q(\xi_2)$, where $\xi_1$ and~$\xi_2$ travel around the whole $\hat S_1$ and~$\hat S_2$, respectively and correspondingly. Suppose $\dot\lambda:\dot S_2\to\dot S_1$ is defined by $\hat\lambda(\xi_2):=Q(\xi_2)$ for $\xi_2\in\hat S_2$, $\bar\lambda(\xi_2):=Q(\xi_2)$ for $\xi_2\in S_2=\bar S_2\cap\hat S_2$ and for $\xi_2\in\bar S_2\!\setminus\! S_2$ we continuously continue~$\bar\lambda$ on~$\bar S_2$. Then $\hat\lambda:\hat S_2\to\hat S_1$ is an exact covering map and $\bar\lambda:\bar S_2\to\bar S_1$ is a proper holomorphic map (cf.\ \cite[Theorems~(8.4) and~(8.9)]{Fo}).
\end{proof}

From Proposition~\ref{proposition3.5} it follows
\begin{corollary}
Let $(\dot S_1,\xi_1)$, $(\dot S_2,\xi_2)\in\mathcal A^0(\xi)$. Then $(\dot S_1,\xi_1)$ and $(\dot S_2,\xi_2)$ are analytically harmonious if and only if they are directly harmonious.
\hfill$\square$
\end{corollary}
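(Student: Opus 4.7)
The plan is to deduce the corollary from Proposition~\ref{proposition3.5} applied in both directions, augmented by a sheet-counting argument for the nontrivial implication.

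For the forward direction, I would suppose $(\dot S_1,\xi_1)$ and $(\dot S_2,\xi_2)$ are analytically harmonious modulo some biholomorphic base-preserving $\dot\lambda:\dot S_2\to\dot S_1$ with $\hat\lambda(\xi_2)=\xi_1$. The inverse $\dot\lambda^{-1}:\dot S_1\to\dot S_2$ is then also biholomorphic and base-preserving, since $\dot p_1\circ\dot\lambda=\dot p_2$ gives $\dot p_2\circ\dot\lambda^{-1}=\dot p_1$, and it satisfies $\hat\lambda^{-1}(\xi_1)=\xi_2$. Hence $\dot\lambda$ witnesses analytic up-harmony of $(\dot S_1,\xi_1)$ with $(\dot S_2,\xi_2)$, while $\dot\lambda^{-1}$ witnesses it in the reverse direction. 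Applying Proposition~\ref{proposition3.5} to each yields polynomials $Q_1,Q_2\in\mathcal M(R)[t]$ with $\xi_1=Q_1(\xi_2)$, $\hat S_1=Q_1(\hat S_2)$ and $\xi_2=Q_2(\xi_1)$, $\hat S_2=Q_2(\hat S_1)$, i.e., direct harmony.

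For the converse, suppose $(\dot S_1,\xi_1)$ and $(\dot S_2,\xi_2)$ are directly harmonious via polynomials $Q_1$ and~$Q_2$. Proposition~\ref{proposition3.5} supplies base-preserving holomorphic covering maps $\dot\lambda_1:\dot S_2\to\dot S_1$ and $\dot\lambda_2:\dot S_1\to\dot S_2$ with $\hat\lambda_1(\xi_2)=\xi_1$ and $\hat\lambda_2(\xi_1)=\xi_2$. To promote $\dot\lambda_1$ to a biholomorphism, I would count sheets: letting $n_i$ be the degree of the minimal polynomial of~$\xi_i$, the canonical projection $\hat p_i:\hat S_i\to\hat R$ is $n_i$-sheeted by Theorem~\ref{theorem3.3}; if $m_i$ is the number of sheets of~$\hat\lambda_i$, the base-preservation identities $\hat p_1\circ\hat\lambda_1=\hat p_2$ and $\hat p_2\circ\hat\lambda_2=\hat p_1$ force $n_1m_1=n_2$ and $n_2m_2=n_1$, hence $m_1m_2=1$ and $m_1=m_2=1$. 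A one-sheeted holomorphic covering map is a biholomorphism, and together with $\hat\lambda_1(\xi_2)=\xi_1$ this yields analytic harmony.

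The main obstacle is the rigor of the sheet-counting step, since the maps from Proposition~\ref{proposition3.5} are exact covers on the $\hat S$-level but branched holomorphic covers on the $\bar S$-level, so one must verify that finite sheet numbers are well-defined on both levels, agree with one another, and multiply under composition in this mixed setting. An alternative route that avoids this bookkeeping is to form $\dot\lambda_2\circ\dot\lambda_1:\dot S_2\to\dot S_2$, observe it is a base-preserving self-map fixing~$\xi_2$, and exploit the connectedness and path-connectedness of $(\hat S_2,\check{\mathscr T}')$ noted in Remark~\ref{remark4} together with a unique-lifting argument to conclude $\dot\lambda_2\circ\dot\lambda_1=\mathrm{id}_{\dot S_2}$, and symmetrically on~$\dot S_1$, so that $\dot\lambda_1$ is biholomorphic with inverse~$\dot\lambda_2$.
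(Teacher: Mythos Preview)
Your proposal is correct and follows the paper's intended route: the paper simply records that the corollary ``follows from Proposition~\ref{proposition3.5}'' and gives no further argument, so what you have written is a fleshed-out version of exactly that deduction. Your forward direction is the immediate application of Proposition~\ref{proposition3.5} in each direction, and your converse supplies the one extra observation the paper leaves implicit, namely that the two covering maps produced by Proposition~\ref{proposition3.5} must each be one-sheeted; either your sheet-counting via $n_1m_1=n_2$, $n_2m_2=n_1$ or your unique-lifting alternative suffices, and both are standard in this setting since the $\hat p_i$ are finite-sheeted exact covers by Theorem~\ref{theorem3.3}.
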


Suppose $\hat R$ is a base surface. Fix a point~$r_0$ in~$\hat R$, which we call a \textit{base point} in~$\hat R$. Let $\hat\varphi\in\hat{\mathcal H}_{r_0}$ be an algebraic punctured partial germ at~$r_0$ on~$\hat R$ and let $\dot S$ be the original basic algebraic Riemann surface determined by~$\hat\varphi$. In order to make the difference, we put a ``label"~$\hat\varphi$ on~$\dot S$ and call $(\hat\varphi;\dot S)$ an \textit{original basic algebraic Riemann surface with label} (or \textit{tag})~$\hat\varphi$, denoted by~$\ddot S$. We say $\hat\varphi$ is the (resp.\ a) \textit{natural label} of~$\ddot S$ (resp.~$\dot S$). The original basic function~$\dot F$ (or~$\hat F$) on~$\dot S$ is also considered as the \textit{original basic function} on~$\ddot S$. Let $\ddot S_1=(\hat\varphi_1;\dot S_1)$ and $\ddot S_2=(\hat\varphi_2;\dot S_2)$ be original basic algebraic Riemann surfaces with natural labels. Then we consider that $(\ddot S_1,\xi_1))\overset{\scriptscriptstyle\,\hat\lambda}{\looparrowright}(\ddot S_2,\xi_2)$ precisely if $\hat\varphi_1=\hat\lambda(\hat\varphi_2)$ (refer to Lemma~\ref{lemma3.6} below).

\smallskip
\begin{remark}\label{remark5}
By Lemma~\ref{lemma3.1} (refer to Remark~\ref{remark4}) we can continue the label~$\hat\varphi$ along curves in $(\hat R,\mathscr T(A))$, where $A$ is the set of branch points of the minimal polynomial of~$\hat\varphi$ in $\mathcal M(\hat R)[t]$, to get~$\hat S$.
\end{remark}
\smallskip

In order to give a label to $\dot Z\in\mathcal A$, we now introduce a punctured partial germ in a system of ``equivalent presheaves". Suppose $\mathfrak X$ is a family of basic Riemann surfaces that are analytically harmonious with one another and $\hat\Lambda_0$ is the analytically harmonious relation. Suppose $(X,\mathscr T;\hat X,\hat{\mathscr T})\in\mathfrak X$ and $Y$ is a traditional Riemann surface. Denote by $\mathcal H(U\!\to\!Y)$ the set of all holomorphic mappings from $U$ to~$Y$, where $U$ is an open set in $(X,\mathscr T)$. Let $\mathcal H_{X,Y}=(\mathcal H(U\!\to\!Y))_{U\in\mathscr T}$ be the family consisting of all holomorphic mappings from $U$ to~$Y$ for all $U\in\mathscr T$. It is a sheaf of sets from $X$ to~$Y$. Denote by $\mathcal H_{\mathfrak X,Y}$ the system of all sheaves $\mathcal H_{X,Y}$ for $(X,\mathscr T;\hat X,\hat{\mathscr T})\in\mathfrak X$.

Let $a_0\in\hat X_0$, where $\hat X_0\in\mathfrak X$. Denote
$$
\tilde{a}:=\{\hat\lambda(a_0):\, \hat\lambda\in\hat\Lambda_0 \text{ is an analytically harmonious mapping from } \hat X_0 \text{ to } \hat X\in\mathfrak X\}.
$$
Let
$$
\mathcal H_{\mathfrak X(\tilde a),Y}:=\biguplus_{\hat\lambda\in\hat\Lambda_0}\,\biguplus_{U\in\hat{\mathscr T}^{\circ}(\hat\lambda(a_0))} \mathcal H(U\!\to\!Y),
$$
which is a disjoint union (we may also use $\mathcal H(\tilde{\hat{\mathscr T}}^{\circ}(\tilde a)\!\to\!Y)$ to represent $\mathcal H_{\mathfrak X(\tilde a),Y}$). In $\mathcal H_{\mathfrak X(\tilde a),Y}$, two elements (mappings) $f_1\in \mathcal H(U_1\!\to\!Y)$ and $f_2\in \mathcal H(U_2\!\to\!Y)$ ( $U_1\in\hat{\mathscr T}_1^{\circ}(\hat\lambda_1(a_0))$ and $U_2\in\hat{\mathscr T}_2^{\circ}(\hat\lambda_2(a_0))$, $\hat\lambda_1$, $\hat\lambda_2\in\hat\Lambda_0$) are said to be \textit{equivalent}, denoted $f_1\,\underset{\tilde a}{\hat\thicksim}\,f_2$, if there exists $U\in\hat{\mathscr T}^{\circ}(\hat\lambda(a_0))$ ($\hat\lambda\in\hat\Lambda_0$) with $\hat\lambda^{-1}(U)\subseteq\hat\lambda_1^{-1}(U_1)\cap\hat\lambda_2^{-1}(U_2)$ such that $f_1\circ\hat\lambda_1|_{\hat\lambda^{-1}(U)}=f_2\circ\hat\lambda_2|_{\hat\lambda^{-1}(U)}$. It is easy to see that this really is an equivalence relation. Denote
$$
\mathcal H_{\tilde a}:=\mathcal H_{\mathfrak X(\tilde a),Y}\Big/\underset{\tilde a}{\hat\thicksim}\, ,
$$
which is the set of all equivalence classes and is called the \textit{punctured partial stalk} of the sheaf system $\mathcal H_{\mathfrak X,Y}$ at~$\tilde a$ or~$\hat\lambda(a_0)$. Suppose $f\in\mathcal H(V\!\to\!Y)$, where $V\supseteq U$, $V\in\mathscr T$ and $U\in\hat{\mathscr T}^{\circ}(\hat\lambda(a_0))$. The equivalence class of  $f|_U\in \mathcal H(U\!\to\!Y)$ modulo~$\underset{\tilde a}{\hat\thicksim}$ is called the \textit{punctured partial germ} of~$f$ at~$\tilde a$ or~$\hat\lambda(a_0)$, denoted~$\langle f\rangle_{\tilde a}$ or~$\langle f\rangle_{\hat\lambda(a_0)}$.

Suppose $\mathbf A$ is the set consisting of all algebraic punctured partial germs at the base point~$r_0$ on~$\hat R$. Then $\mathbf A$ is a field. Let $\ddot{\mathcal A}^0$ denote the set of all original basic algebraic Riemann surfaces with labels over~$\hat R$ determined by germs in~$\mathbf A$. Given a subset~$\mathtt S$ of~$\mathbf A$, let $\mathbf S$ be the subfield $\mathcal M_{r_0}(\mathtt S)$ of~$\mathbf A$ generated by~$\mathtt S$ and $\mathcal M_{r_0}=\{\langle f\rangle_{r_0}:f\in\mathcal M(\hat R)\}$, and then let $\ddot{\mathcal S}^0$ be the set of all original basic algebraic Riemann surfaces with labels determined by germs in~$\mathbf S$. Let $\tilde\Lambda$ and $\tilde\Lambda^0$ denote the \textit{analytically harmonious relation} and the \textit{directly harmonious relation}, respectively. Suppose
$$
\ddot{\mathcal S}^0=\biguplus_{j\in J} L^0_j
$$
is the partition of~$\ddot{\mathcal S}^0$ by~$\tilde\Lambda^0$. We call $L^0_j$ ($j\in J$) \textit{original level surfaces} or \textit{original levels}. Suppose $\tilde\Lambda^0_j$ is the directly harmonious relation in~$L^0_j$ ($j\in J$). Suppose $\dot Z=(\hat Z,\bar Z)$ is a basic algebraic Riemann surface that is analytically harmonious with $\ddot S\in L^0_j$ and $\dot\sigma=(\hat\sigma,\bar\sigma)$ is the analytically harmonious mapping from $\ddot S=(\hat\varphi;\dot S)$ to~$\dot Z$. Then $\bar\sigma\in\mathcal H(\bar S\!\to\!\bar Z)$. Let $\mathfrak X=L^0_j$ and let $\langle\dot\sigma\rangle_{\hat\varphi}$ denote the punctured partial germ~$\langle\bar\sigma\rangle_{\hat\varphi}$. We now put label~$\langle\dot\sigma\rangle_{\hat\varphi}$ on~$\dot Z$ and call $(\langle\dot\sigma\rangle_{\hat\varphi};\dot Z)$ a \textit{basic algebraic Riemann surface with label} (or \textit{tag})~$\langle\dot\sigma\rangle_{\hat\varphi}$, denoted by~$\ddot Z$. We say $\langle\dot\sigma\rangle_{\hat\varphi}$ is the (resp.\ a) \textit{given label} of~$\ddot Z$ (resp.~$\dot Z$). It is worth noting that if $\ddot Z$ is an original basic algebraic Riemann surface with natural label~$\hat\varphi$ then the given label of~$\ddot Z$ is just $\langle\mathrm{id}\rangle_{\hat\varphi}$, which is uniform with its natural label~$\hat\varphi$.

Let $\ddot{\mathcal A}$ denote the set of all basic algebraic Riemann surfaces with labels over~$\hat R$. Let $\dot\varLambda=(\hat\varLambda,\bar\varLambda)$ denote the \textit{analytically up-harmonious relation} in~$\ddot{\mathcal A}$ determined by the direct up-harmonious relation~$\dot\varLambda^0=(\hat\varLambda^0,\bar\varLambda^0)$ in~$\ddot{\mathcal A}^0$, which is defined as follows: For $\ddot Z_1$, $\ddot Z_2\in\ddot{\mathcal A}$, $\ddot Z_1(\zeta_1)\overset{\scriptscriptstyle\,\dot\lambda}{\looparrowright}\ddot Z_2(\zeta_2)$ ($\dot\lambda\in\dot\varLambda$) if and only if there exist  $\ddot S_1$, $\ddot S_2\in\ddot{\mathcal A}^0$ such that $\displaystyle\ddot Z_1(\zeta_1)\overset{\scriptscriptstyle\dot\lambda_1}{\leftrightarrow}\ddot S_1(\xi_1)$, $\displaystyle\ddot Z_2(\zeta_2)\overset{\scriptscriptstyle\dot\lambda_2}{\leftrightarrow}\ddot S_2(\xi_2)$, $\displaystyle\ddot S_1(\xi_1)\overset{\scriptscriptstyle\,\dot\lambda_0}{\looparrowright}\ddot S_2(\xi_2)$ ($\dot\lambda_1$, $\dot\lambda_2\in\tilde\Lambda$, $\dot\lambda_0\in\dot\varLambda^0$ and $\ddot Z(\zeta)$ denotes $(\ddot Z,\zeta)$)\, and $\displaystyle\dot\lambda=\dot\lambda_1\circ\dot\lambda_0\circ\dot\lambda_2^{-1}$, where $\dot\lambda$ is called an \textit{analytically up-harmonious mapping}. We may also denote $\ddot Z_1(\zeta_1)\overset{\scriptscriptstyle\,\dot\lambda}{\looparrowright}\ddot Z_2(\zeta_2)$ simply by $\ddot Z_1\overset{\scriptscriptstyle\,\dot\lambda}{\looparrowright}\ddot Z_2$ or $\ddot Z_1\looparrowright\ddot Z_2$. Meanwhile, $\ddot Z_1\overset{\scriptscriptstyle\,\dot\lambda}{\looparrowright}\ddot Z_2$ also means that the label~$\langle\dot\lambda_2\rangle_{\hat\varphi_2}$ of~$\ddot Z_2$ is mapped to the label~$\langle\dot\lambda_1\rangle_{\hat\varphi_1}$ of~$\ddot Z_1$ by~$\hat\lambda$ ($\dot\lambda=(\hat\lambda,\bar\lambda)$) ($\hat\varphi_1$ and~$\hat\varphi_2$ are the natural labels of $\ddot S_1$ and~$\ddot S_2$ respectively), where we define
$
\hat\lambda(\langle\dot\mu_2\rangle_{\hat\varphi_2}):=\langle\dot\mu_1\rangle_{\hat\varphi_1}
$
for holomorphic mappings $\dot\mu_j:\ddot S_j\to\ddot Z_j$ ($j=1$, $2$), $\displaystyle\ddot S_1\overset{\scriptscriptstyle\,\dot\lambda_0}{\looparrowright}\ddot S_2$ ($\ddot S_1$ and~$\ddot S_2$ are original algebraic Riemann surfaces with the natural labels) and $\displaystyle\dot\mu_1\circ\dot\lambda_0=\dot\lambda\circ\dot\mu_2$, which can easily be shown to be well defined. By Proposition~\ref{proposition3.5} we know that $\dot\varLambda$ is really an analytically up-harmonious relation. If $\ddot Z_1\overset{\scriptscriptstyle\,\dot\lambda}{\looparrowright}\ddot Z_2$, then we say $\ddot Z_2$ is \textit{over}~$\ddot Z_1$ or $\ddot Z_1$ is \textit{under}~$\ddot Z_2$, also denoted by $\ddot Z_2\geqslant\ddot Z_1$ or $\ddot Z_1\leqslant\ddot Z_2$. Here if $\dot\lambda:\ddot Z_2\to\ddot Z_1$ is not biholomorphic, then we say $\ddot Z_2$ is \textit{strictly over}~$\ddot Z_1$ or $\ddot Z_1$ is \textit{strictly under}~$\ddot Z_2$. If $\dot\lambda:\ddot Z_2\to\ddot Z_1$ is biholomorphic, then we say $\ddot Z_2$ is \textit{equivalent} to~$\ddot Z_1$ modulo~$\dot\lambda$ (or~$\hat\lambda$), denoted $\displaystyle\ddot Z_1\overset{\scriptscriptstyle\dot\lambda}{\leftrightarrow}\ddot Z_2$ (i.e.\ $\ddot Z_2(\zeta_2)$ is harmonious with~$\ddot Z_1(\zeta_1)$).

Suppose
$$
L_j:=\{\ddot Z:\,\ddot Z\in\ddot{\mathcal A} \text{ is analytically harmonious with some } \ddot S\in L^0_j\} \quad (j\in J),
$$
which are called \textit{level surfaces} or \textit{levels}. Let
$$
\tilde{Z}:=\biguplus_{j\in J} L_j.
$$
Then $\tilde{Z}$ is the analytically up-harmonious class of~$\ddot{\mathcal S}^0$ in~$\ddot{\mathcal A}$. We call $\tilde{Z}$ the \textit{algebraic Riemann surface} (over~$\hat R$) determined by~$\mathtt S$ and $\ddot{\mathcal S}^0$ the \textit{original algebraic Riemann surface} corresponding to~$\tilde Z$, denoted by~$\tilde Z^0$. We call $\mathbf S$ in the above the \textit{natural label set} or the \textit{natural label field} of~$\tilde Z$ or~$\tilde Z^0$, 
denoted by~$\mathsf L(\tilde Z)$ or~$\mathsf L(\tilde Z^0)$.

\smallskip
\begin{remark}\label{remark6}
If $\mathtt S=\{\hat\varphi\}$, where $\hat\varphi$ is an algebraic punctured partial germ at the base point~$r_0$, then instead of a $($traditional$)$ Riemann surface $($determined by~$\hat\varphi)$ we consider the analytically up-harmonious class~$\tilde{Z}$ determined by~$\mathtt S$, which has a geographic element the basic Riemann surface that is determined by~$\hat\varphi$, as our $($algebraic$)$ Riemann surface, which we call the (\textit{algebraic}) \textit{Riemann surface} determined by~$\hat\varphi$.
\end{remark}

\begin{remark}\label{remark7}
Generally, the above definition means that we consider an algebraic Riemann surface as a system of basic algebraic Riemann surfaces organized by the analytically up-harmonious relation with the aid of labels.
\end{remark}

\begin{remark}\label{remark10-1}
Suppose $\tilde Z$ is an algebraic Riemann surfaces over a base surface~$\hat R$ with base point~$r_0$. Then we may consider that all~$\hat Z$, for $\dot Z=(\hat Z,\bar Z)$ and $\ddot Z=(\langle\dot\sigma\rangle_{\hat\varphi};\dot Z)\in\tilde Z$, form a ``coordinate system" in~$\tilde Z$ and all~$\bar Z$ together show the topological and complex structure of~$\tilde Z$, where the base point~$r_0$ may be regarded as an ``origin of coordinates".
\end{remark}
\smallskip

Noticing Remark~\ref{remark4} or Remark~\ref{remark5} we can deduce

\begin{lemma}\label{lemma3.6}
Suppose $\hat\varphi$, $\hat\varphi_1\in\mathbf A$ and $\hat\varphi=\hat P(\hat\varphi_1)$ for some $\hat P(t)\in\mathcal M(\hat R)[t]$. Suppose $\ddot S$ and~$\ddot S_1$ are two original basic algebraic Riemann surfaces over the base surface~$\hat R$ determined by~$\hat\varphi$ and~$\hat\varphi_1$ with labels~$\hat\varphi$ and~$\hat\varphi_1$, respectively. Then there is an up-harmonious mapping $\dot p:\ddot S_1\to\ddot S$, which is defined by $\hat p(\hat\psi_1)=\hat P(\hat\psi_1)$ for $\hat\psi_1\in\hat S_1$ and $\bar p(\hat\psi_1)=\hat p(\hat\psi_1)$ for $\hat\psi_1\in S_1=\hat S_1\cap\bar S_1$ $(\dot S_1=(\hat S_1,\bar S_1))$, such that $\dot F\circ\dot p=\dot P(\dot F_1)$, where $\dot F$ and~$\dot F_1$ are the original basic functions on~$\ddot S$ and~$\ddot S_1$ respectively and $\dot P=(\hat P,\bar P)$ $(\bar P=\hat P)$.
\hfill$\square$
\end{lemma}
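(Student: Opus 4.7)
The plan is to define $\hat p:\hat S_1\to\hat S$ by the given formula $\hat p(\hat\psi_1):=\hat P(\hat\psi_1)$, verify it is well defined, show it is essentially para-holomorphic, then extend it to a holomorphic map $\bar p:\bar S_1\to\bar S$ between the tied Riemann surfaces, and finally verify the algebraic identity $\dot F\circ\dot p=\dot P(\dot F_1)$. Because $\ddot S$ and $\ddot S_1$ are already equipped with natural labels $\hat\varphi$ and $\hat\varphi_1$, and $\hat p(\hat\varphi_1)=\hat P(\hat\varphi_1)=\hat\varphi$, the map automatically preserves labels, so producing $\hat p$ is enough to obtain the up-harmonious $\dot p$.

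The main step (and the one I expect to be the main obstacle) is proving that $\hat p(\hat S_1)\subseteq\hat S$, i.e.\ that $\hat P(\hat\psi_1)$ satisfies the minimal polynomial $Q(t)\in\mathcal M(\hat R)[t]$ of $\hat\varphi$ for every $\hat\psi_1\in\hat S_1$. Here I would invoke Remark~\ref{remark4}: $\hat S_1$ is connected and path-connected in the essential topology, so by Lemma~\ref{lemma3.1} every $\hat\psi_1\in\hat S_1$ is obtained from $\hat\varphi_1$ by analytic continuation along some curve $u$ in $R$. Because the coefficients of $\hat P$ and of $Q$ lie in $\mathcal M(\hat R)$, their germs continue analytically along $u$, and polynomial operations commute with continuation (this is where the formal identity $Q(\hat P(\cdot))=0$, known to vanish at the germ $\hat\varphi_1$, gets transported to the continued germ). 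Hence $Q(\hat P(\hat\psi_1))$ is the continuation of $Q(\hat P(\hat\varphi_1))=Q(\hat\varphi)=0$ along $u$ and is therefore $0$, so $\hat P(\hat\psi_1)\in\hat S$.

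Next, I would verify partial continuity and essential holomorphy of $\hat p$ locally. Around any $\hat\psi_1\in\hat S_1$ with $\hat p_1(\hat\psi_1)=r$, choose a basic neighborhood of the form $\langle\hat B,f_1\rangle\subseteq\hat S_1$ with $f_1\in\mathcal H(\hat B^\circ)$ representing $\hat\psi_1$. Then $\hat p$ acts on this chart as $\langle f_1\rangle_{r'}\mapsto\langle P(f_1)\rangle_{r'}$, where $P(t)$ is the local polynomial representative of $\hat P$; this is just the composition of the chart homeomorphism (Theorem~\ref{theorem2.1}(1)) with the chart homeomorphism on $\hat S$ and the holomorphic operation $f_1\mapsto P(f_1)$, so $\hat p$ is partially continuous, and its restriction to $S_1$ is holomorphic into $S$. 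Thus $\hat p\in\hat{\mathcal H}(\hat S_1\to\hat S)$.

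For the extension to $\bar p$, I would use the same machinery invoked in the proof of Theorem~\ref{theorem3.3}: the map $\hat p|_{S_1}:S_1\to S$ is holomorphic into the tied Riemann surface $\bar S$, and it is proper over $\bar R$ by finiteness of the fibres (controlled by the degrees of the two minimal polynomials). Standard removal-of-singularities for branched covers (as in Forster, Theorems~(8.4) and~(8.9), already used in Proposition~\ref{proposition3.5}) extends $\hat p|_{S_1}$ to a holomorphic branched covering $\bar p:\bar S_1\to\bar S$ with $\bar p|_{S_1}=\hat p|_{S_1}$. Setting $\dot p=(\hat p,\bar p)$ gives the required holomorphic, base-preserving map. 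Finally, the identity $\dot F\circ\dot p=\dot P(\dot F_1)$ is checked on $S_1$ by the defining formula of the original basic function: for $\hat\psi_1=\langle f_1\rangle_r\in S_1$ one has $\hat F(\hat p(\hat\psi_1))=\hat F(\langle P(f_1)\rangle_r)=P(f_1)(r)=P(\hat F_1(\hat\psi_1))$, and the equality propagates to $\bar S_1$ by continuity (the identity principle), proving $\dot F\circ\dot p=\dot P(\dot F_1)$ on $\dot S_1$.
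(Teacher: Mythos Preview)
Your proposal is correct and is essentially a detailed expansion of the paper's own argument: the paper gives no proof beyond the sentence ``Noticing Remark~\ref{remark4} or Remark~\ref{remark5} we can deduce'' and marks the lemma with a $\square$, so the intended reasoning is precisely the analytic-continuation argument you spell out (every $\hat\psi_1\in\hat S_1$ is a continuation of $\hat\varphi_1$, polynomial operations with $\mathcal M(\hat R)$-coefficients commute with continuation, hence $Q(\hat P(\hat\psi_1))=0$). One small point you leave implicit is surjectivity of $\hat p$, which is part of being an up-harmonious mapping; it follows by the same continuation argument run in reverse (continue $\hat\varphi_1$ along the curve that carries $\hat\varphi$ to a given $\hat\psi\in\hat S$).
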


The (directly) up-harmonious mapping $\dot p:\ddot S_1\to\ddot S$ in Lemma~\ref{lemma3.6} is said to be \textit{corresponding to} $\hat\varphi=\hat P(\hat\varphi_1)$ or \textit{determined by}~$\hat P(t)$. By Proposition~\ref{proposition3.5} we see that this $\dot p$ is a holomorphic covering map. We can also see that generally an analytically up-harmonious mapping $\dot\lambda=(\hat\lambda,\bar\lambda):\ddot Z_1\to\ddot Z$ (or $\dot Z_1\to\dot Z$) is a holomorphic covering map, which means that $\hat\lambda:\hat Z_1\to\hat Z$ is a covering map and $\bar\lambda:\bar Z_1\to\bar Z$ is a branched holomorphic covering map.

Suppose $\hat X$ and~$\hat Y$ are connected universal topological spaces (i.e.\ $(\hat X,\check{\mathscr T})$ and $(\hat Y,\check{\mathscr T}')$ are connected) and $\hat p:\hat Y\to\hat X$ is a covering map. The covering is called \textit{Galois} if for every pair of points $y_1$, $y_2\in\hat Y$ with $\hat p(y_1)=\hat p(y_2)$ there exists a deck transformation $\hat\sigma:\hat Y\to\hat Y$ such that $\hat\sigma(y_1)=y_2$.
Suppose $\dot X=(\hat X,\bar X)$ and $\dot Y=(\hat Y,\bar Y)$ are universal topological spaces with tied spaces and $\dot p:\dot Y\to\dot X$ is a covering map, which means $\hat p:\hat Y\to\hat X$ is a covering map, where $\dot p=(\hat p,\bar p)$. The covering $\dot p:\dot Y\to\dot X$ is called \textit{Galois} if $\hat p:\hat Y\to\hat X$ is Galois.

Suppose $\hat R$ is a base surface with base point $r_0\in\hat R$. Suppose $\tilde Y$ is an algebraic Riemann surface over~$\hat R$ and $\dot\varLambda$ is the analytically up-harmonious relation in~$\tilde Y$. For $\ddot Y\in\tilde Y$ we denote mapping $\dot f:\ddot Y\to(\hat{\mathbb C},\hat{\mathbb C})$ (i.e.\ $\hat f:\hat Y\to\hat{\mathbb C}$ and $\bar f:\bar Y\to\hat{\mathbb C}$ are mappings) by $\dot f:\ddot Y\to\hat{\mathbb C}$, called a (\textit{complex}) \textit{function} on~$\ddot Y$. Let $\dot f_1:\ddot Y_1\to\hat{\mathbb C}$ and $\dot f_2:\ddot Y_2\to\hat{\mathbb C}$ be functions, where $\ddot Y_1$, $\ddot Y_2\in\tilde Y$. If there exists $\dot\lambda\in\dot\varLambda$ such that $\ddot Y_1\overset{\scriptscriptstyle\,\dot\lambda}{\looparrowright}\ddot Y_2$ and $\dot f_2=\dot f_1\circ\dot\lambda$, then we say that $(\dot f_2,\ddot Y_2)$ is \textit{over}~$(\dot f_1,\ddot Y_1)$. If $(\dot f_2,\ddot Y_2)$ is over~$(\dot f_1,\ddot Y_1)$ or $(\dot f_1,\ddot Y_1)$ is over~$(\dot f_2,\ddot Y_2)$, then we say that $(\dot f_1,\ddot Y_1)$ and $(\dot f_2,\ddot Y_2)$ are \textit{directly compatible}. If there exists a chain of functions $(\dot g_j,\ddot Z_j)$ ($\ddot Z_j\in\tilde Y$, $j=1$, $\dots$, $n$) such that $(\dot g_1,\ddot Z_1)=(\dot f_1,\ddot Y_1)$, $(\dot g_n,\ddot Z_n)=(\dot f_2,\ddot Y_2)$ and $(\dot g_j,\ddot Z_j)$ and $(\dot g_{j+1},\ddot Z_{j+1})$ are directly compatible ($j=1$, $\dots$, $n\!-\!1$), then we say that $(\dot f_1,\ddot Y_1)$ and $(\dot f_2,\ddot Y_2)$ are \textit{compatible}. It is plain that the compatibility relation is an equivalence relation. We call the equivalence class of $(\dot f,\ddot Y)$ a (\textit{complex}) \textit{function} on~$\tilde Y$, denoted~$\tilde f$ or $\tilde f:\tilde Y\to\hat{\mathbb C}$. We call $(\dot f,\ddot Y)$ an \textit{expression element} of~$\tilde f$, where $\dot f$ is called an \textit{expression function} and $\ddot Y$ an \textit{expression domain}. Let $\tilde f|_{\ddot Y}:=\dot f$, called a \textit{restriction} of~$\tilde f$ on~$\ddot Y$. If partial elements of~$\tilde f$ (as a set) are omitted, then we still use it to denote the same function.

For functions $\dot f_1$ on~$\ddot Y_1$ and $\dot f_2$ on~$\ddot Y_2$, where $\ddot Y_1$, $\ddot Y_2\in\tilde Y$, by the following lemma we know that $\dot f_1$ and~$\dot f_2$ are compatible precisely if there exists $\ddot Y_0\in\tilde Y$ such that $\ddot Y_1\overset{\scriptscriptstyle\,\dot\lambda_1}{\looparrowright}\ddot Y_0$, $\ddot Y_2\overset{\scriptscriptstyle\,\dot\lambda_2}{\looparrowright}\ddot Y_0$ for $\dot\lambda_1$, $\dot\lambda_2\in\dot\varLambda$ and $\dot f_1\circ\dot\lambda_1=\dot f_2\circ\dot\lambda_2$.

\begin{lemma}\label{lemma3.7}
Let $\ddot S_1$, $\ddot S_2\in\ddot{\mathcal S}^0$. Then there exists $\ddot S_0\in\ddot{\mathcal S}^0$ and $\dot q_1$, $\dot q_2\in\dot\varLambda^0$ $(\dot\varLambda^0$ is the direct up-harmonious relation in~$\ddot{\mathcal A}^0)$ such that $\ddot S_1\overset{\scriptscriptstyle\,\dot q_1}{\looparrowright}\ddot S_0$ and $\ddot S_2\overset{\scriptscriptstyle\,\dot q_2}{\looparrowright}\ddot S_0$.
\end{lemma}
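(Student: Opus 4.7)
The plan is to reduce to the classical primitive element theorem from field theory and then invoke Lemma~\ref{lemma3.6} to convert the resulting polynomial relations into the required up-harmonious mappings. Let $\hat\varphi_j$ denote the natural label of~$\ddot S_j$ for $j=1,2$; by hypothesis $\hat\varphi_j\in\mathbf S$, and since each~$\hat\varphi_j$ is algebraic over $\mathcal M_{r_0}\subseteq\mathbf S$ (by the very definition of~$\mathbf A$), the composite $\mathcal M_{r_0}(\hat\varphi_1,\hat\varphi_2)$ is a finite extension of~$\mathcal M_{r_0}$ contained in~$\mathbf S$.

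Because $\mathcal M_{r_0}$ has characteristic zero, this extension is separable, and the primitive element theorem furnishes some $\hat\varphi_0\in\mathcal M_{r_0}(\hat\varphi_1,\hat\varphi_2)\subseteq\mathbf S$ with $\mathcal M_{r_0}(\hat\varphi_0)=\mathcal M_{r_0}(\hat\varphi_1,\hat\varphi_2)$; concretely one may take $\hat\varphi_0:=\hat\varphi_1+c\,\hat\varphi_2$ for a suitable constant $c\in\mathbb C\subseteq\mathcal M_{r_0}$ (avoiding the finitely many forbidden values coming from conjugate differences). Since $\hat\varphi_0\in\mathbf S\subseteq\mathbf A$ is itself an algebraic punctured partial germ at~$r_0$ on~$\hat R$, it possesses a well-defined irreducible minimal polynomial in $\mathcal M(\hat R)[t]$ and therefore determines, via Theorem~\ref{theorem3.3}, an original basic algebraic Riemann surface with natural label~$\hat\varphi_0$, which I take as $\ddot S_0\in\ddot{\mathcal S}^0$.

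By the defining property of the primitive element, $\hat\varphi_1,\hat\varphi_2\in\mathcal M_{r_0}(\hat\varphi_0)$, so there exist polynomials $Q_1(t),Q_2(t)\in\mathcal M(\hat R)[t]$ with $\hat\varphi_j=Q_j(\hat\varphi_0)$ for $j=1,2$. Applying Lemma~\ref{lemma3.6} to each pair $(\hat\varphi_j,\hat\varphi_0)$ yields directly up-harmonious mappings $\dot q_j:\ddot S_0\to\ddot S_j$ determined by~$Q_j(t)$, i.e.\ $\ddot S_j\overset{\scriptscriptstyle\,\dot q_j}{\looparrowright}\ddot S_0$ with $\dot q_j\in\dot\varLambda^0$, which is precisely what is required. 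The only delicate point is confirming that the primitive element can be chosen inside~$\mathbf S$ rather than merely inside~$\mathbf A$ and that $\hat\varphi_0$ is genuinely algebraic over~$\mathcal M(\hat R)$ (so that Theorem~\ref{theorem3.3} indeed applies to produce~$\ddot S_0$); both are automatic since $\mathbf S$ is itself a field containing $\mathcal M_{r_0}$ together with $\hat\varphi_1,\hat\varphi_2$, and every element of a finite algebraic extension of~$\mathcal M_{r_0}$ is itself algebraic over~$\mathcal M(\hat R)$.
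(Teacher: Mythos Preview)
Your proof is correct and follows essentially the same approach as the paper: both use the primitive element theorem on $\mathcal M_{r_0}(\hat\varphi_1,\hat\varphi_2)$ to produce $\hat\varphi_0$, express $\hat\varphi_1,\hat\varphi_2$ as polynomials in $\hat\varphi_0$, and then invoke Lemma~\ref{lemma3.6} to obtain the required up-harmonious mappings. Your version is slightly more explicit in verifying $\hat\varphi_0\in\mathbf S$ (so that $\ddot S_0\in\ddot{\mathcal S}^0$ rather than merely $\ddot{\mathcal A}^0$), which the paper leaves implicit.
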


\begin{proof}
Suppose $\ddot S_j=(\hat\varphi_j,\dot S_j)$ ($j=1$, $2$) and $\mathbf S_0=\mathcal M_{r_0}(\hat\varphi_1,\hat\varphi_2)$ (the field generated by~$\hat\varphi_1$, $\hat\varphi_2$ and~$\mathcal M_{r_0}$), where $r_0$ is the base point in the base surface~$\hat R$. Then there exists $\hat\varphi_0\in\mathbf S_0$ such that $\mathbf S_0=\mathcal M_{r_0}(\hat\varphi_0)$. Hence, there exist polynomials~$\hat Q_1(t)$ and~$\hat Q_2(t)$ in~$\mathcal M(\hat R)[t]$ such that $\hat\varphi_1=\hat Q_1(\hat\varphi_0)$ and $\hat\varphi_2=\hat Q_2(\hat\varphi_0)$. Let $\ddot S_0$ be the original algebraic Riemann surface determined by~$\hat\varphi_0$ with label~$\hat\varphi_0$. Then $\ddot S_1\overset{\scriptscriptstyle\,\dot q_1}{\looparrowright}\ddot S_0$ and $\ddot S_2\overset{\scriptscriptstyle\,\dot q_2}{\looparrowright}\ddot S_0$ by Lemma~\ref{lemma3.6}, where $\dot q_1$ and~$\dot q_2$ are determined by~$\hat Q_1$ and~$\hat Q_2$ respectively.
\end{proof}

Let $\dot f\in\mathcal M(\ddot Y)$, where $\ddot Y\in\tilde Y$. Then the equivalence class~$\tilde f$ of $(\dot f,\ddot Y)$ is called a \textit{meromorphic function} on~$\tilde Y$. If $\dot f\in\mathcal H(\ddot Y)$ then $\tilde f$ is called a \textit{holomorphic function} on~$\tilde Y$. Denote the set of all meromorphic (resp.\ holomorphic) functions on~$\tilde Y$ by $\mathcal M(\tilde Y)$ (resp.\ $\mathcal H(\tilde Y)$). Then $\mathcal M(\tilde Y)$ is a field and $\mathcal H(\tilde Y)$ is a ring by means of the operation defined on representatives. $\mathcal M(\tilde Y)$ is also a vector space over $\mathcal M(\dot R)$ under the scalar multiplication that $\dot h\cdot\tilde f:=\tilde g$ for $\dot h\in\mathcal M(\dot R)$ and $\tilde f\in\mathcal M(\tilde Y)$, where $\tilde f$ is determined by $\dot f\in\mathcal M(\ddot Y)$, $\dot p:\ddot Y\to\dot R$ is the canonical projection and $\tilde g\in\mathcal M(\tilde Y)$ is determined by $\dot g=(\dot h\circ\dot p)\cdot\dot f$. Moreover, we may consider $\mathcal M(\dot R)$ as a subfield of~$\mathcal M(\tilde Y)$ by the monomorphism $\gamma:\mathcal M(\dot R)\to\mathcal M(\tilde Y)$, defined by $\gamma(\dot h):=\tilde{h}$, where $\tilde{h}\in\mathcal M(\tilde Y)$ is determined by $\dot h\in\mathcal M(\dot R)$.

Suppose $\tilde Y$ and $\tilde Z$ are algebraic Riemann surfaces (over base surfaces~$\hat R_1$ and~$\hat R_2$, respectively). Let $\dot\sigma_j:\ddot Y_j\to\ddot Z_j$ be mappings ($\ddot Y_j\in\tilde Y$ and $\ddot Z_j\in\tilde Z$, $j=1$, $2$). If there exist $\dot\lambda$, $\dot\mu\in\dot\varLambda$ such that $\ddot Y_1\overset{\scriptscriptstyle\,\dot\lambda}{\looparrowright}\ddot Y_2$, $\ddot Z_1\overset{\scriptscriptstyle\,\dot\mu}{\looparrowright}\ddot Z_2$ and $\dot\mu\circ\dot\sigma_2=\dot\sigma_1\circ\dot\lambda$ then we say $\dot\sigma_2$ is \textit{over}~$\dot\sigma_1$ or $\dot\sigma_1$ is \textit{under}~$\dot\sigma_2$; if moreover $\dot\sigma_2\circ\dot\lambda^{-1}=\dot\mu^{-1}\circ\dot\sigma_1$ (i.e.\ $\dot\sigma_2(\dot\lambda^{-1}(y_1))=\dot\mu^{-1}(\dot\sigma_1(y_1))$ for any $y_1\in\hat Y_1$), then we say $\dot\sigma_2$ is \textit{exactly over}~$\dot\sigma_1$ or $\dot\sigma_1$ is \textit{exactly under}~$\dot\sigma_2$. If $\dot\sigma_1$ is (resp.\ exactly) over~$\dot\sigma_2$ or $\dot\sigma_2$ is (resp.\ exactly) over~$\dot\sigma_1$, then we say $\dot\sigma_1$ and~$\dot\sigma_2$ are \textit{directly} (resp.\ \textit{directly and exactly}) \textit{compatible}. If in the above $\dot\lambda$ and~$\dot\mu$ are biholomorphic, i.e.\ $\ddot Y_1\overset{\scriptscriptstyle\,\dot\lambda}{\leftrightarrow}\ddot Y_2$ and $\ddot Z_1\overset{\scriptscriptstyle\,\dot\mu}{\leftrightarrow}\ddot Z_2$, then we say $\dot\sigma_1$ and~$\dot\sigma_2$ are \textit{equivalent}, denoted $\dot\sigma_1\thicksim\dot\sigma_2$. This is clearly an equivalence relation.

\smallskip
\begin{remark}\label{remark8}
Evidently, $\dot\sigma_2\circ\dot\lambda^{-1}=\dot\mu^{-1}\circ\dot\sigma_1$ implies $\dot\mu\circ\dot\sigma_2=\dot\sigma_1\circ\dot\lambda$. If $\dot\sigma_1$ is injective and $\dot\sigma_2$ is surjective, then $\dot\sigma_2\circ\dot\lambda^{-1}=\dot\mu^{-1}\circ\dot\sigma_1$ precisely if $\dot\mu\circ\dot\sigma_2=\dot\sigma_1\circ\dot\lambda$.
\end{remark}
\smallskip

For a mapping $\dot\sigma:\ddot Y\to\ddot Z$ we denote its \textit{domain} $\operatorname{dom}(\dot\sigma):=\ddot Y$ and its \textit{codomain} $\operatorname{codom}(\dot\sigma):=\ddot Z$. Suppose $\tilde\sigma$ is a set of some mappings. Suppose $\tilde Y$ and~$\tilde Z$ are two algebraic Riemann surfaces. If for two mappings $\dot\sigma_1$ and~$\dot\sigma_2$, there exists $\dot\sigma_0\in\tilde\sigma$ 
over $\dot\sigma_1$ and~$\dot\sigma_2$ and, moreover, that $\operatorname{dom}(\dot\sigma_2)$ is over (resp.\ under) $\operatorname{dom}(\dot\sigma_1)$ implies that $\dot\sigma_2$ is 
over (resp.\ under)~$\dot\sigma_1$, then we say $\dot\sigma_1$ and~$\dot\sigma_2$ are 
\textit{compatible} in~$\tilde\sigma$. If any two mappings $\dot\sigma_1$, $\dot\sigma_2\in\tilde\sigma$ are 
compatible in~$\tilde\sigma$ then we say $\tilde\sigma$ is 
\textit{compatible} (similarly we have the notion of \textit{exact compatibility} of~$\tilde\sigma$). Suppose $\tilde\sigma$ is compatible and satisfies the following two conditions: 
\medskip\par\noindent
(1) There exists $\dot\sigma_0\in\tilde\sigma$ such that for any $\ddot Y\in\tilde Y$ over $\operatorname{dom}(\dot\sigma_0)$ there exists $\dot\sigma\in\tilde\sigma$ with $\operatorname{dom}(\dot\sigma)\leftrightarrow\ddot Y$;
\smallskip\par\noindent
(2) For any $\ddot Z\in\tilde Z$ , there exists $\dot\sigma'\in\tilde\sigma$ with $\operatorname{codom}(\dot\sigma')$ over $\ddot Z$.  
\medskip\par\noindent
Then we say $\tilde\sigma$ is a \textit{mapping} from $\tilde Y$ to~$\tilde Z$.
We call $(\dot\sigma,\ddot Y)$ an \textit{expression element} of\/~$\tilde\sigma$, where $\dot\sigma$ is called an \textit{expression mapping} and $\ddot Y$ an \textit{expression domain}. Denote $\tilde\sigma|_{\ddot Y}:=\dot\sigma$, called the \textit{restriction} of~$\tilde\sigma$ on~$\ddot Y$. Specially, a function $\tilde f:\tilde Y\to\hat{\mathbb C}$ is also a mapping.

Suppose $\tilde\sigma$ and~$\tilde\tau$ are mappings from $\tilde Y$ to~$\tilde Z$. If every $\dot\sigma\in\tilde\sigma$ over some $\dot\sigma_0\in\tilde\sigma$ is compatible with every $\dot\tau\in\tilde\tau$ over some $\dot\tau_0\in\tilde\tau$ both in~$\tilde\sigma$ and~$\tilde\tau$, then we say $\tilde\sigma$ and~$\tilde\tau$ are \textit{equal}, denoted $\tilde\sigma\overset{\scriptscriptstyle\mathrm{map}}=\tilde\tau$ (it is probable that as sets $\tilde\sigma$ and~$\tilde\tau$ are not equal). This is equivalent to that there exists $\ddot Y_0\in\tilde Y$ such that for every $\dot\sigma\in\tilde\sigma$ with $\operatorname{dom}(\dot\sigma)$ over~$\ddot Y_0$ there exists $\dot\tau\in\tilde\tau$ such that $\dot\tau\thicksim\dot\sigma$ and for every $\dot\tau\in\tilde\tau$ with $\operatorname{dom}(\dot\tau)$ over~$\ddot Y_0$ there exists $\dot\sigma\in\tilde\sigma$ such that $\dot\sigma\thicksim\dot\tau$. In fact, if necessary we may assume $\tilde\sigma$ contains any mapping $\dot\sigma:\ddot Y\to\ddot Z$ that is under any $\dot\sigma_1\in\tilde\sigma$, where $\ddot Y\in\tilde Y$ and $\ddot Z\in\tilde Z$.

Suppose $\tilde Z$ and~$\tilde W$ are algebraic Riemann surfaces over~a base surface~$\hat R$ with base point~$r_0$. If $\tilde Z\subseteq\tilde W$ as sets, then we say $\tilde W$ is \textit{over}~$\tilde Z$ or $\tilde Z$ is \textit{under}~$\tilde W$, denoted $\tilde W\geqslant\tilde Z$ or $\tilde Z\leqslant\tilde W$. Suppose $\tilde\tau:\tilde X\to\tilde Y_1$ and $\tilde\sigma:\tilde Y_2\to\tilde Z$ are mappings, where $\tilde Y_1\leqslant\tilde Y_2$. Let $\tilde\sigma\circ\tilde\tau$ be the set of all mappings $\dot\sigma\circ\dot\lambda\circ\dot\tau$ for all possible $\dot\tau:\ddot X\to\ddot Y_1$ in~$\tilde\tau$ and the corresponding $\dot\sigma:\ddot Y_2\to\ddot Z$ in~$\tilde\sigma$ with $\ddot Y_2\overset{\scriptscriptstyle\dot\lambda}{\leftrightarrow}\ddot Y_1$. If $\tilde\sigma\circ\tilde\tau$ satisfies the condition~(2) of a mapping (when $\tilde Y_1=\tilde Y_2$ this condition is satisfied naturally), then it is a mapping from~$\tilde X$ to $\tilde Z$, which we call the \textit{composition} of $\tilde\sigma$ and~$\tilde\tau$. It is easy to see that $\tilde\sigma_1\overset{\scriptscriptstyle\mathrm{map}}=\tilde\sigma_2$ and $\tilde\tau_1\overset{\scriptscriptstyle\mathrm{map}}=\tilde\tau_2$ imply $\tilde\sigma_1\circ\tilde\tau_1\overset{\scriptscriptstyle\mathrm{map}}=\tilde\sigma_2\circ\tilde\tau_2$. It is also easy to see that the composition satisfies the associative law. Specially, we have a function $\tilde f\circ\tilde\sigma:\tilde X\to\hat{\mathbb C}$ for a mapping $\tilde\sigma:\tilde X\to\tilde Y_1$ and a function $\tilde f:\tilde Y_2\to\hat{\mathbb C}$, where $\tilde Y_1\leqslant\tilde Y_2$ and there is an expression domain~$\ddot Y_2$ of~$\tilde f$ belonging to~$\tilde Y_1$.

\smallskip
\begin{remark}\label{remark9}
If $\dot\sigma_2$ is over~$\dot\sigma_1$, then $\dot\sigma_2$ is surjective implies $\dot\sigma_1$ is surjective. If $\dot\sigma_2$ is exactly over~$\dot\sigma_1$, then $\dot\sigma_2$ is injective implies $\dot\sigma_1$ is injective.
\end{remark}
\smallskip

Suppose $\tilde\sigma:\tilde Y\to\tilde Z$ is a mapping. If every $\dot\sigma\in\tilde\sigma$ over some $\dot\sigma_0\in\tilde\sigma$ is surjective then we say $\tilde\sigma$ is \textit{surjective}. We say $\tilde\sigma$ is \textit{injective} if there exists $\dot\sigma_0\in\tilde\sigma$ such that the following conditions are satisfied: 
\medskip\par\noindent
(1) For any $\ddot Z\in\tilde Z$ over $\operatorname{codom}(\dot\sigma_0)$, there exists $\dot\sigma'\in\tilde\sigma$ with $\operatorname{codom}(\dot\sigma')\leftrightarrow\ddot Z$;
\smallskip\par\noindent
(2) For $\dot\sigma_1$, $\dot\sigma_2\in\tilde\sigma$ over $\dot\sigma_0$, that $\operatorname{codom}(\dot\sigma_2)$ is over $\operatorname{codom}(\dot\sigma_1)$ implies that $\dot\sigma_2$ is over $\dot\sigma_1$;
\smallskip\par\noindent
(3) Every $\dot\sigma\in\tilde\sigma$ over $\dot\sigma_0$ is injective.
\bigskip

We say $\tilde\sigma$ is \textit{bijective} if it is both surjective and injective. Suppose $\tilde\sigma$ is a bijection. Let $\dot\sigma_0\in\tilde\sigma$ be the one in the conditions for $\tilde\sigma$ being bijective. Denote
$$
\tilde\sigma^{-1}:=\{\dot\sigma^{-1}:\,\dot\sigma\in\tilde\sigma \text{ over } \dot\sigma_0\in\tilde\sigma\}.
$$
Then $\tilde\sigma^{-1}$ is a mapping from $\tilde Z$ to~$\tilde Y$, called the \textit{inverse} of~$\tilde\sigma$. Evidently, $\tilde\sigma\overset{\scriptscriptstyle\mathrm{map}}=\tilde\tau$ implies $\tilde\sigma^{-1}\overset{\scriptscriptstyle\mathrm{map}}=\tilde\tau^{-1}$. We also have $\tilde\sigma^{-1}\circ\tilde\sigma\overset{\scriptscriptstyle\mathrm{map}}=\operatorname{id}_{\tilde Y}$ and $\tilde\sigma\circ\tilde\sigma^{-1}\overset{\scriptscriptstyle\mathrm{map}}=\operatorname{id}_{\tilde Z}$, where $\operatorname{id}_{\tilde Y}:=\{\operatorname{id}_{\ddot Y}:\ddot Y\in\tilde Y\}$, called the \textit{identical mapping} on~$\tilde Y$ (clearly, $\operatorname{id}_{\tilde Y}\circ\tilde\tau\overset{\scriptscriptstyle\mathrm{map}}=\tilde\tau$ and $\tilde\sigma\circ\operatorname{id}_{\tilde Y}\overset{\scriptscriptstyle\mathrm{map}}=\tilde\sigma$ for mappings $\tilde\tau:\tilde X\to\tilde Y$ and $\tilde\sigma:\tilde Y\to\tilde Z$). Conversely, suppose $\tilde\sigma:\tilde Y\to\tilde Z$ is a mapping and there exists a mapping $\tilde\tau:\tilde Z\to\tilde Y$ such that $\tilde\tau\circ\tilde\sigma\overset{\scriptscriptstyle\mathrm{map}}=\operatorname{id}_{\tilde Y}$ and $\tilde\sigma\circ\tilde\tau\overset{\scriptscriptstyle\mathrm{map}}=\operatorname{id}_{\tilde Z}$. Then $\tilde\sigma$ is a bijection and $\tilde\sigma^{-1}\overset{\scriptscriptstyle\mathrm{map}}=\tilde\tau$.

Let $\tilde\sigma$ be a mapping from $\tilde Y$ to~$\tilde Z$. If every $\dot\sigma\in\tilde\sigma$ over some $\dot\sigma_0\in\tilde\sigma$ is (resp.\ essentially) continuous (i.e.\ $\hat\sigma$ and~$\bar\sigma$ are (resp.\ essentially) continuous, where $\dot\sigma=(\hat\sigma,\bar\sigma)$) then $\tilde\sigma$ is said to be (\textit{exactly}) (resp.\ \textit{essentially}) \textit{continuous}. If $\tilde\sigma$ is bijective and both $\tilde\sigma$ and~$\tilde\sigma^{-1}$ are (resp.\ essentially) continuous then $\tilde\sigma$ is said to be (\textit{exactly}) (resp.\ \textit{essentially}) \textit{homeomorphic}. If every $\dot\sigma\in\tilde\sigma$ over some $\dot\sigma_0\in\tilde\sigma$ is (resp.\ essentially) holomorphic then $\tilde\sigma$ is said to be (\textit{exactly}) (resp.\ \textit{essentially}) \textit{holomorphic} (\textit{analytic}). If $\tilde\sigma$ is bijective and both $\tilde\sigma$ and~$\tilde\sigma^{-1}$ are (resp.\ essentially) holomorphic then $\tilde\sigma$ is said to be (\textit{exactly}) (resp.\ \textit{essentially}) \textit{biholomorphic}. In fact, it is sufficient for a (resp.\ an essential) homeomorphism~$\tilde\sigma$ being (resp.\ essentially) biholomorphic that $\tilde\sigma$ is (resp.\ essentially) holomorphic.

Suppose $\tilde X$, $\tilde Y$ and~$\tilde Z$ are algebraic Riemann surfaces. Suppose $\tilde p:\tilde Y\to\tilde X$ and $\tilde q:\tilde Z\to\tilde X$ are (resp.\ essentially) continuous maps. A mapping $\tilde\sigma:\tilde Y\to\tilde Z$ is called \textit{fiber-preserving} (\textit{over}~$\tilde X$) if $\tilde p\overset{\scriptscriptstyle\mathrm{map}}=\tilde q\circ\tilde\sigma$. A mapping $\tilde p:\tilde Y\to\tilde X$ is called a (or an \textit{exact}) (resp.\ an \textit{essential}) \textit{covering map} if it is a surjection and every $\dot p\in\tilde p$ over some $\dot p_0\in\tilde p$ is a (resp.\ an essential) covering map.

Suppose $\tilde p:\tilde Y\to\tilde X$ is a (resp.\ an essential) covering map. We call a fiber-preserving (resp.\ essential) homeomorphism $\tilde\sigma:\tilde Y\to\tilde Y$ a (or an \textit{exact}) (resp.\ an \textit{essential}) \textit{covering transformation} or a (or an \textit{exact}) (resp.\ an \textit{essential}) \textit{deck transformation} of~$\tilde p$. Obviously, the set of all deck transformations of~$\tilde p$ forms a group under the compsition of mappings, denoted $\operatorname{Deck}(\tilde Y\!\overset{\scriptscriptstyle{\tilde p}}\rightarrow\!\tilde X)$ or $\operatorname{Deck}(\tilde Y\!/\tilde X)$. The (resp.\ essential) covering $\tilde p:\tilde Y\to\tilde X$ is called \textit{Galois} if for any $\dot p\in\tilde p$ there is $\dot q\in\tilde p$ over~$\dot p$ such that $\dot q$ is Galois. It is easy to see that if the (resp.\ essential) covering map $\tilde p:\tilde Y\to\tilde X$ is (resp.\ essentially) holomorphic then the (resp.\ essential) deck transformations~$\tilde\sigma$ are (resp.\ essentially) biholomorphic.

Suppose $\tilde Z$ and~$\tilde W$ are algebraic Riemann surfaces over~a base surface~$\hat R$ with base point~$r_0$ and $\tilde Z\leqslant\tilde W$. Suppose $\tilde Z$ is determined by a subfield~$\mathbf S$ of~$\mathbf A$ and $\ddot W\in\tilde W$ is an original algebraic Riemann surface with natural label~$\hat\psi$. Let $\mathbf S'=\mathbf S\cap\mathcal M_{r_0}(\hat\psi)$, where $\mathcal M_{r_0}=\{\langle \hat f\rangle_{r_0}:\hat f\in\mathcal M(\hat R)\}$. Then there exists $\hat\varphi\in\mathbf S$ and $\hat P(t)\in\mathcal M_{r_0}[t]$ such that $\mathbf S'=\mathcal M_{r_0}(\hat\varphi)$ and $\hat\varphi=\hat P(\hat\psi)$. Suppose $\ddot Z$ is an original algebraic Riemann surface determined by~$\hat\varphi$ with label~$\hat\varphi$. Then the directly up-harmonious mapping $\dot p:\ddot W\to\ddot Z$ determined by~$\hat P(t)$ (see Lemma~\ref{lemma3.6}) is \textit{maximal}, which means if there exists an analytically up-harmonious mapping $\dot q:\ddot W\to\ddot Z'$ for $\ddot Z'\in\tilde Z$ over~$\ddot Z$ then $\dot q\thicksim\dot p$. Let $\tilde p$ be the set of all~$\dot p$ defined above. Then $\tilde p$ is a holomorphic covering map (see Proposition~\ref{proposition3.5}), called the \textit{natural covering map} from $\tilde W$ to~$\tilde Z$.  Suppose $\tilde X$, $\tilde Y$ and~$\tilde Z$ are algebraic Riemann Surfaces satisfying $\tilde X\leqslant\tilde Z\leqslant\tilde Y$. If $\tilde p:\tilde Y\to\tilde X$, $\tilde q_1:\tilde Y\to\tilde Z$ and $\tilde q_2:\tilde Z\to\tilde X$ are the natural covering maps, then it follows that $\tilde q_2\circ\tilde q_1\overset{\scriptscriptstyle\mathrm{map}}=\tilde p$.


\smallskip
\begin{remark}\label{remark10}
If the concerned algebraic Riemann surfaces have holographic elements, then we may define the mappings between them by means of the holographic elements. In general, we may replace these algebraic Riemann surfaces by their holographic elements, respectively.
\end{remark}

\begin{remark}\label{remark11}
Some of the notions in Subsections~3.3 and~3.4 are based on the consideration of the following example: If we are going to find the sum of two algebraic functions~$\sqrt{z}$ and~$\sqrt[3]{z}$, we may calculate ``\,$\omega_1(\sqrt[6]{z})^3+\omega_2(\sqrt[6]{z})^2$\,'' on the Riemann surface of algebraic function~$\sqrt[6]{z}$ instead, where $\omega_1^2=\omega_2^3=1$.
\end{remark}


\section{Algebraic Riemann surfaces and the Galois correspondence}

Let $\hat R$ be a base surface (a basic Riemann surface with $\hat R=R$) with base point~$r_0\in\hat R$. Suppose $\tilde X$ and~$\tilde Y$ are algebraic Riemann Surfaces over~$\hat R$ and $\tilde Y$ is over~$\tilde X$. Suppose the natural covering map $\tilde\pi:\tilde Y\to\tilde X$ is Galois (then we say $\tilde Y$ is \textit{Galois} over~$\tilde X$ or $\tilde Y/\tilde X$ is \textit{Galois}). In this section $\deck(\tilde Y\!/\tilde X)$ always means $\deck(\tilde Y\!\overset{\!\scriptscriptstyle\tilde\pi}\to\!\tilde X)$. Suppose $\tilde Z$ is an algebraic Riemann Surface satisfying $\tilde X\leqslant\tilde Z\leqslant\tilde Y$, which we call an \textit{intermediate $($algebraic$)$ Riemann Surface} of\/~$\tilde Y/\tilde X$.

Let $D:=\deck(\tilde Y\!/\tilde X)$. Then $E=\deck(\tilde Y\!/\tilde Z)$ is a subgroup of~$D$. Denote
$$
[\tilde Z:\tilde X]:=[\mathcal M(\tilde Z):\mathcal M(\tilde X)]
$$
(the degree of field extension $\mathcal M(\tilde Z)/\mathcal M(\tilde X)$), called the \textit{$($covering$)$ order} of $\tilde Z$ over~$\tilde X$ (or of\/~$\tilde Z/\tilde X$). If $[\tilde Z:\tilde X]$ is finite then we say $\tilde Z$ is \textit{finite} over~$\tilde X$ or $\tilde Z/\tilde X$ is \textit{finite}.

Denote
$$
\operatorname{Int}(\tilde Y\!/\tilde X):=\{\tilde Z:\,\tilde X\leqslant\tilde Z\leqslant\tilde Y\},
$$
$$
\operatorname{FG}(\tilde Y\!/\tilde X):=\{\tilde Z\in\operatorname{Int}(\tilde Y\!/\tilde X):\,\tilde Z/\tilde X \text{ is finite Galois}\}
$$
and
$$
\mathscr E:=\{E=\deck(\tilde Y\!/\tilde Z):\,\tilde Z\in\operatorname{FG}(\tilde Y\!/\tilde X)\}.
$$
We define a topology on~$D$ similar to the Krull topology on a Galois group as follows:
$$
\mathscr K_D:=\{T:\,T=\emptyset \text{ or } T=\bigcup_j\tilde\tau_j E_j \text{ for some } \tilde\tau_j\in D \text{ and } E_j\in\mathscr E\}
$$
(cf.\ \cite[Definition~17.5]{Mo}). By Lemma~\ref{lemma4.5} below (refer to Lemma~\ref{lemma4.7} below) and similar reasoning to that in \cite[Section~17]{Mo} we see that $\mathscr K_D$ is really a topology on~$D$. We also call this topology the \textit{Krull topology} on~$D$. We always assume that $D$ is equipped with the Krull topology later. Denote
$$
\mathfrak C:=\{C:\,C \text{ is a closed subgroup of } D\}.
$$
If $\tilde Y$ is finite over~$\tilde X$, then $\mathfrak C$ is consisting of all subgroups of~$D$ (refer to Lemma~\ref{lemma4.7} below and \cite[Example~17.9]{Mo}).

\begin{theorem}[Galois Correspondence on Algebraic Riemann Surfaces]\label{theorem4.1}
Suppose $\tilde X$ and~$\tilde Y$ are algebraic Riemann Surfaces and $\tilde Y$ is Galois over~$\tilde X$. Let $D=\deck(\tilde Y\!/\tilde X)$. Then the mapping
\begin{equation*}
\begin{split}
\Delta:\,\operatorname{Int}(\tilde Y\!/\tilde X)&\to\mathfrak C \\
\tilde Z&\mapsto\deck(\tilde Y\!/\tilde Z)
\end{split}
\end{equation*}
is a bijection, which gives an inclusion reversing correspondence and whose inverse mapping is\/ $\Gamma$ given in~$(\ref{4.13})$ below. Moreover, letting $E=\Delta(\tilde Z)$, we have
\medskip\par\noindent
$(1)$ The following statements are equivalent:
\medskip\par
$($\rm{i}$)$ $[D:E]$ (the index of~$E$ in~$D$) is finite;
\smallskip\par
$($\rm{ii}$)$ $[\tilde Z:\tilde X]$ is finite;
\smallskip\par
$($\rm{iii}$)$ $E$ is open in~$D$.
\medskip\par\noindent
On this condition it is true that $[D:E]=[\tilde Z:\tilde X]$.
\medskip\par\noindent
$(2)$ $\tilde Z$ is Galois over~$\tilde X$ if and only if $E$ is normal in~$D$. On this condition, there is a group isomorphism
\begin{equation*}
\deck(\tilde Z/\tilde X)\cong D/E, 
\end{equation*}
which is also a homeomorphism as the quotient group~$D/E$ is given the quotient topology.
\end{theorem}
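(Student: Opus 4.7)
The overall strategy is to reduce the geometric statement to the classical fundamental theorem of (possibly infinite) Galois theory applied to the field extension $\mathcal{M}(\tilde Y)/\mathcal{M}(\tilde X)$. The plan is to build a dictionary in two stages: first, to identify intermediate Riemann surfaces of $\tilde Y/\tilde X$ with intermediate fields of $\mathcal{M}(\tilde Y)/\mathcal{M}(\tilde X)$ in an inclusion-preserving way (using Lemma~\ref{lemma3.4} and the fact that, over the chosen base point $r_0$, an algebraic Riemann surface $\tilde Z$ is determined by its natural label field $\mathsf L(\tilde Z)\subseteq\mathbf A$); second, to identify $D=\deck(\tilde Y\!/\tilde X)$ with $\gal(\mathcal{M}(\tilde Y)/\mathcal{M}(\tilde X))$ via pullback $\tilde\sigma\mapsto\tilde\sigma^{*}$. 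Once these two identifications are in place and are shown to be compatible with the Krull topology, Theorem~\ref{theorem4.1} becomes a translation of the classical result. The inverse map will then be
\begin{equation}\label{4.13}
\Gamma:\mathfrak C\to\operatorname{Int}(\tilde Y\!/\tilde X),\qquad C\mapsto\tilde Y^{C},
\end{equation}
where $\tilde Y^{C}$ is the intermediate surface whose natural label field is the fixed field $\mathcal{M}(\tilde Y)^{C}\cap\mathbf A$.

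First, I would verify the functorial properties of the assignment $\tilde Z\mapsto\mathcal{M}(\tilde Z)$. The subfield $\mathcal{M}(\tilde Z)\subseteq\mathcal{M}(\tilde Y)$ depends only on the intermediate surface, and by Lemma~\ref{lemma3.4} every intermediate subfield of $\mathcal{M}(\tilde Y)/\mathcal{M}(\tilde X)$ is realised as $\mathcal{M}(\tilde Z)$ for a unique intermediate $\tilde Z$. Next, for $\tilde\sigma\in D$ the pullback $\tilde\sigma^{*}:\mathcal{M}(\tilde Y)\to\mathcal{M}(\tilde Y)$ is a well-defined field automorphism (by biholomorphy of~$\tilde\sigma$) fixing $\mathcal{M}(\tilde X)$ (by the fiber-preserving property $\tilde\pi\circ\tilde\sigma\overset{\scriptscriptstyle\mathrm{map}}=\tilde\pi$); that the assignment $\tilde\sigma\mapsto\tilde\sigma^{*}$ is an injective group homomorphism is routine. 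Surjectivity onto $\gal(\mathcal{M}(\tilde Y)/\mathcal{M}(\tilde X))$ uses the \emph{geometric Galois} hypothesis: on each $\ddot Y\in\tilde Y$ that is Galois as a finite cover of the corresponding original $\ddot X$, the classical finite Galois correspondence for the natural label extension $\mathcal{M}_{r_0}(\hat\psi_{\tilde Y})/\mathcal{M}_{r_0}(\hat\psi_{\tilde X})$ pairs off every field automorphism with a deck transformation; assembling these compatibly over the directed system that defines~$\tilde Y$ yields the required $\tilde\sigma$. Under this identification, $\deck(\tilde Y\!/\tilde Z)$ corresponds exactly to $\gal(\mathcal{M}(\tilde Y)/\mathcal{M}(\tilde Z))$, so $\Delta$ corresponds to the classical $K\mapsto\gal(\mathcal{M}(\tilde Y)/K)$.

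Second, I would verify that the geometric Krull topology $\mathscr K_D$ agrees with the classical Krull topology on $\gal(\mathcal{M}(\tilde Y)/\mathcal{M}(\tilde X))$ under the identification above. The point is that $\tilde Z\in\operatorname{FG}(\tilde Y\!/\tilde X)$ iff $\mathcal{M}(\tilde Z)/\mathcal{M}(\tilde X)$ is a finite Galois subextension (in one direction this uses Proposition~\ref{proposition3.5} to see that geometric finite Galois covers realise all finite Galois intermediate fields, and in the other direction one uses that the deck group of a finite covering acts by field automorphisms of the label field). Therefore the defining basic open sets $\tilde\tau_j E_j$ in $\mathscr K_D$ are precisely the cosets of $\gal(\mathcal{M}(\tilde Y)/\mathcal{M}(\tilde Z_j))$ for finite Galois~$\tilde Z_j$, which is the standard Krull basis. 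Once this match is in hand, Lemma~\ref{lemma4.5} and Lemma~\ref{lemma4.7} (cited in the paper) ensure that $\mathscr K_D$ is genuinely a topology and that closed subgroups of $D$ correspond to closed subgroups of $\gal(\mathcal{M}(\tilde Y)/\mathcal{M}(\tilde X))$.

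With these two translations, Theorem~\ref{theorem4.1} follows directly from the classical infinite-dimensional Galois correspondence: the bijection $\Delta$ and its inverse $\Gamma$ are the translations of $K\mapsto\gal(\mathcal{M}(\tilde Y)/K)$ and its inverse $H\mapsto\mathcal{M}(\tilde Y)^{H}$, the order-reversing property is immediate, and part~(1) is the classical equivalence of ``$[D:E]<\infty$ $\Leftrightarrow$ $K/\mathcal{M}(\tilde X)$ finite $\Leftrightarrow$ $E$ open'', together with the equality $[D:E]=[K:\mathcal{M}(\tilde X)]$. For part~(2), normality of $E$ in $D$ corresponds to normality of $\mathcal{M}(\tilde Z)/\mathcal{M}(\tilde X)$, and the quotient $D/E$ gets identified with $\gal(\mathcal{M}(\tilde Z)/\mathcal{M}(\tilde X))$, whose Krull topology agrees with the quotient topology on $D/E$ by a standard argument, and finally (via the reverse direction of step~2) with $\deck(\tilde Z/\tilde X)$. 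The principal obstacle will be the careful bookkeeping in step~2: showing that a field automorphism of $\mathcal{M}(\tilde Y)$ over $\mathcal{M}(\tilde X)$ is actually induced by a biholomorphic fiber-preserving self-map of~$\tilde Y$, rather than only by a system of compatible automorphisms of the finite covers defining~$\tilde Y$. This will require using Lemma~\ref{lemma3.7} to organise expression elements coherently and checking that the directed limit of classical finite Galois deck groups is realised by an actual deck transformation in the sense of Subsection~3.4.
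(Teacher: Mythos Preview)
Your proposal is correct and follows essentially the same route as the paper: the paper's proof also factors $\Delta$ through the classical Krull--Galois correspondence via the bijection $\mathcal M:\operatorname{Int}(\tilde Y/\tilde X)\to\operatorname{Int}(K/N)$ of Lemma~\ref{lemma4.3} and the isomorphism--homeomorphism $\mathcal G:\deck(\tilde Y/\tilde X)\to\gal(K/N)$ of Lemmas~\ref{lemma4.5} and~\ref{lemma4.7}, then defines $\Gamma=\mathcal R\circ\mathscr F\circ\mathcal G$ and reads everything off \cite[Theorem~17.8]{Mo}. Two small points: the paper uses $\mathsf G_{\tilde\sigma}(\tilde f)=\tilde f\circ\tilde\sigma^{-1}$ rather than $\tilde\sigma^{*}$ (your pullback $\tilde f\mapsto\tilde f\circ\tilde\sigma$ is an anti-homomorphism, so you should insert the inverse), and for the surjectivity of $\mathcal G$ the paper does not pass to a directed limit of finite Galois deck groups but instead, given $\alpha\in\gal(M/L)$, builds $\tilde\sigma_\alpha$ directly by applying the uniqueness part of Theorem~\ref{theorem3.3} on each $\ddot S\in\tilde W^0$ and then verifying compatibility via Lemma~\ref{lemma3.6}.
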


In order to obtain Theorem~\ref{theorem4.1}, we give some preliminary results.

Suppose $\tilde Y$ is an algebraic Riemann surface over a base surface~$\hat R$ with base point~$r_0$ and $\tilde Y^0$ is the corresponding original algebraic Riemann surface. Suppose $\tilde f\in\mathcal M(\tilde Y)$. Then there exists $\ddot S_1=(\hat\varphi_1;\dot S_1)\in\tilde Y^0$ with the original basic function $\dot F_1$ and a polynomial $\dot P_1(t)=(\hat P_1(t),\bar P_1(t))\in\mathcal M(\dot R)[t]$ such that $\tilde f|_{\ddot S_1}=\dot P_1(\dot F_1)$ by Lemma~\ref{lemma3.4}. If there exists another $\ddot S_2=(\hat\varphi_2;\dot S_2)\in\tilde Y^0$ with the original basic function $\dot F_2$ and a polynomial $\dot P_2(t)=(\hat P_2(t),\bar P_2(t))\in\mathcal M(\dot R)[t]$ such that $\tilde f|_{\ddot S_2}=\dot P_2(\dot F_2)$, then by Lemma~\ref{lemma3.7} we can take a common covering $\ddot S_0=(\hat\varphi_0;\dot S_0)\in\tilde Y^0$ of $\ddot S_1$ and~$\ddot S_2$ with $\ddot S_1\overset{\scriptscriptstyle\dot q_1}\looparrowright\ddot S_0$, $\ddot S_2\overset{\scriptscriptstyle\dot q_2}\looparrowright\ddot S_0$, $\hat Q_1(\hat\varphi_0)=\hat\varphi_1$ and $\hat Q_2(\hat\varphi_0)=\hat\varphi_2$, where $\dot Q_j(t)=(\hat Q_j(t),\bar Q_j(t))\in\mathcal M(\dot R)[t]$ and $\dot q_j$ is determined by $\dot Q_j$ ($j=1$, $2$). Thus $\dot F_1\circ\dot q_1=\dot Q_1(\dot F_0)$ and $\dot F_2\circ\dot q_2=\dot Q_2(\dot F_0)$ by Lemma~\ref{lemma3.6}, where $\dot F_0$ is the original basic function on~$\ddot S_0$. Hence
$$
\tilde f|_{\ddot S_0}=\tilde f|_{\ddot S_1}\circ\dot q_1=\dot P_1(\dot F_1)\circ\dot q_1=\dot P_1(\dot F_1\circ\dot q_1)=\dot P_1(\dot Q_1(\dot F_0)),
$$
and similarly
$$
\tilde f|_{\ddot S_0}=\dot P_2(\dot Q_2(\dot F_0)),
$$
which imply $\dot P_1(\dot Q_1(\dot F_0))=\dot P_2(\dot Q_2(\dot F_0))$. Therefore,
$$
\hat P_1(\hat Q_1(\hat\varphi_0))=\hat P_2(\hat Q_2(\hat\varphi_0)),
$$
i.e.\ $\hat P_1(\hat\varphi_1)=\hat P_2(\hat\varphi_2)$. So we can give the following definition:
We call a (resp.\ the original) basic algebraic Riemann surface determined by $\hat P_1(\hat\varphi_1)$ with label (resp.\ with natural label $\hat P_1(\hat\varphi_1)$) a (resp.\ the \textit{original}) \textit{basic domain} of~$\tilde f$. For the basic domain of~$\tilde f$, the label means that corresponding to the natural label $\hat P_1(\hat\varphi_1)$. We denote the original basic domain of~$\tilde f$ by~$\operatorname{obdom}(\tilde f)$. We call the (resp.\ original) level surface determined by $\hat P_1(\hat\varphi_1)$ (i.e.\ containing $\operatorname{obdom}(\tilde f)$) the (resp.\ \textit{original}) \textit{level domain}, denoted by $\mathcal L(\tilde f)$ (resp.\ $\mathcal L^0(\tilde f)$). We call the (resp.\ original) algebraic Riemann surface determined by $\hat P_1(\hat\varphi_1)$ the (resp.\ \textit{original}) \textit{natural domain} of~$\tilde f$, denoted by $\operatorname{Ndom}(\tilde f)$ (resp.\ $\operatorname{oNdom}(\tilde f)$).

By the reasoning and the definition in the above and Lemma~\ref{lemma3.6}, we can deduce

\begin{lemma}\label{lemma4.2}
Suppose $\tilde Y$ is an algebraic Riemann surface and $\tilde f\in\mathcal M(\tilde Y)$. Then $\operatorname{Ndom}(\tilde f)\leqslant\tilde Y$ and every element in~$\mathcal L(\tilde f)$, i.e.\ every basic domain of~$\tilde f$, is a holographic element of~$\,\operatorname{Ndom}(\tilde f)$. For $\ddot S\in\tilde Y^0$ and the original basic function~$\dot F$ on~$\ddot S$ we have $\operatorname{obdom}(\tilde F)=\ddot S$, where $\tilde F\in\mathcal M(\tilde Y)$ is determined by~$\dot F$. If $\ddot S=(\hat\varphi;\dot S)=\operatorname{obdom}(\tilde f)$, $\ddot S_1=(\hat\varphi_1;\dot S_1)\in\tilde Y^0$ is an expression domain of~$\tilde f$ and $\dot F_1$ is the original basic function on~$\ddot S_1$, then $\ddot S$ is an expression domain of~$\tilde f$ and $\dot F:=\tilde f|_{\ddot S}$ is the original basic function on~$\ddot S$, and there exists a directly up-harmonious mapping $\dot\pi_1:\ddot S_1\to\ddot S$ $($i.e.\ $\ddot S\overset{\scriptscriptstyle\dot\pi_1}\looparrowright\ddot S_1)$ and a polynomial $\dot P_1(t)=(\hat P_1(t),\bar P_1(t))\in\mathcal M(\dot R)[t]$ such that $\dot F\circ\dot\pi_1=\dot P_1(\dot F_1)=\tilde f|_{\ddot S_1}$ and $\hat\varphi=\hat P_1(\hat\varphi_1)$.
\hfill $\square$
\end{lemma}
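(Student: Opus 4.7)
The plan is to extract all four assertions from a single observation already implicit in the discussion immediately preceding the lemma: every expression element of $\tilde f$ produces the same germ $\hat\varphi\in\mathbf A$. Concretely, I would first fix notation by choosing any expression element $(\dot F_1,\ddot S_1)$ of $\tilde f$ with $\ddot S_1=(\hat\varphi_1;\dot S_1)\in\tilde Y^0$ and the polynomial $\dot P_1(t)\in\mathcal M(\dot R)[t]$ supplied by Lemma~\ref{lemma3.4} so that $\tilde f|_{\ddot S_1}=\dot P_1(\dot F_1)$. Writing $\hat\varphi:=\hat P_1(\hat\varphi_1)$, the paragraphs before the lemma (which invoke Lemma~\ref{lemma3.7} to reconcile two expression elements) already certify that $\hat\varphi$ depends only on $\tilde f$; by definition $\operatorname{obdom}(\tilde f)$ is the original basic algebraic Riemann surface with natural label $\hat\varphi$, and $\operatorname{Ndom}(\tilde f)$ is the algebraic Riemann surface determined by $\mathcal M_{r_0}(\hat\varphi)$.

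For $\operatorname{Ndom}(\tilde f)\leqslant\tilde Y$ I would simply observe the chain of fields $\mathsf L(\operatorname{Ndom}(\tilde f))=\mathcal M_{r_0}(\hat\varphi)\subseteq\mathcal M_{r_0}(\hat\varphi_1)\subseteq\mathsf L(\tilde Y)$, the inclusions coming from $\hat\varphi=\hat P_1(\hat\varphi_1)$ and $\ddot S_1\in\tilde Y^0$ respectively; since the relation $\leqslant$ between algebraic Riemann surfaces is set-theoretic inclusion and is reflected by inclusion of natural label fields, this gives the first claim at once. For the holographic property of a basic domain $\ddot S$ with label $\hat\varphi$, I would pick an arbitrary $\ddot W\in\operatorname{Ndom}(\tilde f)$; by construction $\ddot W$ is analytically harmonious via some $\dot\mu\in\tilde\Lambda$ to an original $\ddot T$ lying in the same level of $\operatorname{Ndom}(\tilde f)^0$, whose natural label $\eta\in\mathcal M_{r_0}(\hat\varphi)$ admits a representation $\eta=\hat Q(\hat\varphi)$ with $\hat Q\in\mathcal M(\hat R)[t]$. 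Lemma~\ref{lemma3.6} then provides a directly up-harmonious $\dot q:\ddot S\to\ddot T$, and composition $\dot\mu\circ\dot q:\ddot S\to\ddot W$ realises $\ddot W\overset{\dot\mu\circ\dot q}{\looparrowright}\ddot S$ in the two-tier relation $\dot\varLambda$ defined in Subsection~3.4, exhibiting $\ddot S$ as a holographic element of $\operatorname{Ndom}(\tilde f)$.

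For $\operatorname{obdom}(\tilde F)=\ddot S$ I would use the trivial expression element $(\dot F,\ddot S)$ with the choice $\dot P_1(t)=t$, whence the defining germ of $\operatorname{obdom}(\tilde F)$ becomes $\hat P_1(\hat\varphi_1)=\hat\varphi_1$, identifying it with $\ddot S=(\hat\varphi_1;\dot S)$. For the final and most detailed claim, I would start from $\operatorname{obdom}(\tilde f)=\ddot S=(\hat\varphi;\dot S)$ with $\hat\varphi=\hat P_1(\hat\varphi_1)$ and invoke Lemma~\ref{lemma3.6} once more to obtain the directly up-harmonious $\dot\pi_1:\ddot S_1\to\ddot S$ together with the identity $\dot F\circ\dot\pi_1=\dot P_1(\dot F_1)$, where $\dot F$ denotes the original basic function on $\ddot S$. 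Since $\dot P_1(\dot F_1)=\tilde f|_{\ddot S_1}$ by construction, this rearranges to $\tilde f|_{\ddot S_1}=\dot F\circ\dot\pi_1$; reading the definition in Subsection~3.4 of ``$(\dot f_2,\ddot Y_2)$ is over $(\dot f_1,\ddot Y_1)$'' backwards, this says precisely that $(\tilde f|_{\ddot S_1},\ddot S_1)$ is over $(\dot F,\ddot S)$ and hence that $(\dot F,\ddot S)$ lies in the compatibility class $\tilde f$, establishing simultaneously that $\ddot S$ is an expression domain of $\tilde f$ and that $\tilde f|_{\ddot S}=\dot F$ is the original basic function on $\ddot S$.

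The main obstacle is not computational but notational: keeping straight the direction of the $\looparrowright$ arrows — recall that $\ddot W\overset{\dot\lambda}{\looparrowright}\ddot S$ encodes the map $\dot\lambda:\ddot S\to\ddot W$ — and correctly decomposing any relation in $\dot\varLambda$ into a harmonious factor $\dot\mu$ (descending from a general $\ddot W$ to some original $\ddot T$ in its level) followed by a directly up-harmonious factor $\dot q$ (delivered by Lemma~\ref{lemma3.6}). Once this bookkeeping is straight, every individual claim is a one-line consequence of Lemmas~\ref{lemma3.4} and~\ref{lemma3.6}, and no further technical ingredient is required.
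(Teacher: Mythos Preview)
Your proposal is correct and is exactly the expansion the paper intends: the paper gives no detailed proof at all, stating only that the lemma follows ``by the reasoning and the definition in the above and Lemma~\ref{lemma3.6}'' and then closing with a $\square$. Your argument unpacks precisely those ingredients---the well-definedness of $\hat\varphi=\hat P_1(\hat\varphi_1)$ established just before the lemma (via Lemma~\ref{lemma3.7}), the inclusion of label fields for $\operatorname{Ndom}(\tilde f)\leqslant\tilde Y$, and repeated applications of Lemma~\ref{lemma3.6} for the holographic property and the final factorization $\dot F\circ\dot\pi_1=\dot P_1(\dot F_1)$---with the arrow conventions handled correctly.
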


Suppose $\tilde Y$ is an algebraic Riemann surface over the base surface~$\hat R$. We will consider algebraic Riemann surfaces under~$\tilde Y$. Suppose $\tilde Z\leqslant\tilde Y$. Let
$$
\mathcal M_{\tilde Y}(\tilde Z):=\{\tilde f\in\mathcal M(\tilde Y):\,\text{there exists } \ddot Z\in\tilde Z \text{ and } \dot g\in\mathcal M(\ddot Z) \text{ such that } \dot g\in\tilde f\}.
$$
If there is no confusion, we will write $\mathcal M(\tilde Z)$ instead of~$\mathcal M_{\tilde Y}(\tilde Z)$. Actually, we may consider $\mathcal M(\tilde Z)$ in the usual sense as $\mathcal M_{\tilde Y}(\tilde Z)$.

\begin{lemma}\label{lemma4.3}
Suppose $\tilde X\leqslant\tilde Y$ and\/ $\operatorname{Int}(K/N)$ denotes the set of all intermediate fields of\/~$K/N$, where $N=\mathcal M(\tilde X)$ and $K=\mathcal M(\tilde Y)$. Then the mapping
\begin{equation*}
\begin{split}
\mathcal M:\,\operatorname{Int}(\tilde Y\!/\tilde X)&\to\operatorname{Int}(K/N) \\
\tilde Z&\mapsto L=\mathcal M(\tilde Z)
\end{split}
\end{equation*}
is a partial order preserving bijection, whose inverse mapping is $\mathcal R$ given in~$(\ref{4.4})$ below.
\end{lemma}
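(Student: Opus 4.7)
The plan is to realize the map $\mathcal M$ by pulling back to the natural label picture of Subsection~3.4, under which algebraic Riemann surfaces over~$\hat R$ correspond bijectively to subfields of the field~$\mathbf A$ of algebraic punctured partial germs at~$r_0$ containing~$\mathcal M_{r_0}$. For each $\tilde Z\in\operatorname{Int}(\tilde Y/\tilde X)$ I first construct a field isomorphism
$$
\Phi_{\tilde Z}:\,\mathcal M(\tilde Z)\longrightarrow\mathsf L(\tilde Z),\qquad \tilde f\longmapsto \hat\varphi,
$$
where $(\hat\varphi;\dot S)=\operatorname{obdom}(\tilde f)$ is the original basic domain of~$\tilde f$. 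That $\hat\varphi$ is well defined and lies in $\mathsf L(\tilde Z)$ is part of Lemma~\ref{lemma4.2}.

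For surjectivity, given $\hat\varphi\in\mathsf L(\tilde Z)$ the original basic algebraic Riemann surface $\ddot S=(\hat\varphi;\dot S)$ belongs to $\tilde Z^0$; letting $\tilde F$ denote the meromorphic function on~$\tilde Z$ determined by the original basic function $\dot F$ on~$\ddot S$, Lemma~\ref{lemma4.2} yields $\operatorname{obdom}(\tilde F)=\ddot S$, so $\Phi_{\tilde Z}(\tilde F)=\hat\varphi$. For injectivity, if $\Phi_{\tilde Z}(\tilde f_1)=\Phi_{\tilde Z}(\tilde f_2)=\hat\varphi$ then by Lemma~\ref{lemma4.2} each $\tilde f_j|_{\ddot S}$ equals the original basic function $\dot F$ on $\ddot S=(\hat\varphi;\dot S)$, hence $\tilde f_1$ and $\tilde f_2$ share the expression element $(\dot F,\ddot S)$ and therefore coincide. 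To check the ring operations, given $\tilde f_1,\tilde f_2\in\mathcal M(\tilde Z)$ I would use Lemma~\ref{lemma3.7} to pick an original basic domain $\ddot S_0=(\hat\varphi_0;\dot S_0)\in\tilde Z^0$ lying over both $\operatorname{obdom}(\tilde f_j)$; by Lemma~\ref{lemma4.2} there exist polynomials $\dot P_j(t)=(\hat P_j(t),\bar P_j(t))\in\mathcal M(\dot R)[t]$ with $\tilde f_j|_{\ddot S_0}=\dot P_j(\dot F_0)$ and $\Phi_{\tilde Z}(\tilde f_j)=\hat P_j(\hat\varphi_0)$. Then $(\tilde f_1+\tilde f_2)|_{\ddot S_0}=(\dot P_1+\dot P_2)(\dot F_0)$ and $(\tilde f_1\cdot\tilde f_2)|_{\ddot S_0}=(\dot P_1\dot P_2)(\dot F_0)$, so a further application of Lemma~\ref{lemma4.2} gives
$$
\Phi_{\tilde Z}(\tilde f_1+\tilde f_2)=(\hat P_1+\hat P_2)(\hat\varphi_0)=\Phi_{\tilde Z}(\tilde f_1)+\Phi_{\tilde Z}(\tilde f_2),
$$
and likewise for products.

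Next I observe that $\Phi_{\tilde Y}$ restricts to $\Phi_{\tilde Z}$ on $\mathcal M(\tilde Z)\subseteq\mathcal M(\tilde Y)$, since the original basic domain of~$\tilde f$ is an intrinsic attribute of its expression function by Lemma~\ref{lemma4.2}, independent of the ambient algebraic Riemann surface. By the definition in Subsection~3.4, the assignment $\tilde W\mapsto\mathsf L(\tilde W)$ is a partial order preserving bijection between $\operatorname{Int}(\tilde Y/\tilde X)$ and the set of subfields of~$\mathbf A$ lying between $\mathsf L(\tilde X)$ and $\mathsf L(\tilde Y)$; composing with $\Phi_{\tilde Y}^{-1}$ then yields the inverse~$\mathcal R$, namely $\mathcal R(L)$ is the algebraic Riemann surface determined by the subfield $\Phi_{\tilde Y}(L)\subseteq\mathbf A$. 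That $\tilde X\leqslant\mathcal R(L)\leqslant\tilde Y$ follows from $\mathsf L(\tilde X)=\Phi_{\tilde Y}(N)\subseteq\Phi_{\tilde Y}(L)\subseteq\Phi_{\tilde Y}(K)=\mathsf L(\tilde Y)$, and both $\mathcal M$ and $\mathcal R$ preserve inclusion by construction.

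The main obstacle is the verification that $\Phi_{\tilde Z}$ respects the algebraic operations: a meromorphic function on $\tilde Z$ is an equivalence class of expression elements over varying basic domains, whereas its label is a concrete germ at the fixed point~$r_0$. The descent to a common original basic domain via Lemma~\ref{lemma3.7}, combined with the polynomial description supplied by Lemma~\ref{lemma4.2}, is designed precisely to mediate between these two pictures; but one still has to confirm that the polynomial identities evaluated at~$\hat\varphi_0$ produce the correct labels uniformly for $\tilde f_1$, $\tilde f_2$ and their algebraic combinations, which—by a further application of Lemma~\ref{lemma3.7}—reduces to the already built-in well-definedness of $\operatorname{obdom}$.
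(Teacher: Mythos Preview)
Your proof is correct and follows essentially the same route as the paper: both identify $\mathcal M(\tilde Z)$ with the natural label field $\mathsf L(\tilde Z)\subseteq\mathbf A$ via the assignment $\tilde f\mapsto\text{(natural label of }\operatorname{obdom}(\tilde f))$, and use this to define the inverse $\mathcal R$. The paper's proof is terser---it simply sets $\mathcal R(L)=\bigcup\{\mathcal L(\tilde f):\tilde f\in L\}$, asserts that the corresponding label set $\mathsf L(L)$ is an intermediate field of $\mathsf L(\tilde Y)/\mathsf L(\tilde X)$ ``by Lemma~\ref{lemma4.2}'', and leaves the verification that $\mathcal R$ inverts $\mathcal M$ to the reader---whereas you make the field isomorphism $\Phi_{\tilde Z}$ explicit and check its compatibility with the ring operations via Lemma~\ref{lemma3.7}, which is exactly the content the paper suppresses.
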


\begin{proof}
Suppose $L\in\operatorname{Int}(K\!/\!N)$. Define
$$
\mathcal R(L):=\bigcup\{\mathcal L(\tilde f):\,\tilde f\in L\}.
$$
We see that $\mathcal R(L)\in\operatorname{Int}(\tilde Y\!/\!\tilde X)$, since it is determined by
$$
\mathsf L(L):=\{\hat\varphi\in\mathbf A:\,\hat\varphi \text{ is the natural label of } \operatorname{obdom}(\tilde f) \text{ for } \tilde f\in L\},
$$
which is an intermediate field of\/~$\mathsf L(\tilde Y)/\mathsf L(\tilde X)$ ($\mathsf L(\tilde X)$ and~$\mathsf L(\tilde Y)$ are the natural label fields of~$\tilde X$ and~$\tilde Y$ respectively) by Lemma~\ref{lemma4.2}. It is easy to see that the mapping
\begin{equation}
\begin{split}\label{4.4}
\mathcal R:\,\operatorname{Int}(K/N)&\to\operatorname{Int}(\tilde Y\!/\tilde X) \\
L&\mapsto\tilde Z=\mathcal R(L)
\end{split}
\end{equation}
is the inverse of~$\mathcal M$ by Lemma~\ref{lemma4.2}. The preserving of partial order by~$\mathcal M$ is obvious.
\end{proof}

Suppose $\tilde Y^0$ is the original algebraic Riemann surface corresponding to~$\tilde Y$. Suppose $\ddot X$, $\ddot Z\in\tilde Y^0$ and $\ddot Z$ is over~$\ddot X$. Let $\tilde{\ddot X}$ and~$\tilde{\ddot Z}$ denote the level surfaces containing $\ddot X$ and~$\ddot Z$, respectively. Let $N_0=\mathcal M(\ddot X)$, $L_0=\mathcal M(\ddot Z)$, $N=\mathcal M(\tilde{\ddot X}):=\mathcal M(\tilde X)$, where $\tilde X$ is the algebraic Riemann surface with holographic element~$\ddot X$, and $L=\mathcal M(\tilde{\ddot Z})$. Let $\dot\pi:\ddot Z\to\ddot X$ be the directly up-harmonious mapping, which is a holomorphic covering map (called the \textit{natural covering map}). Then $\dot\pi^*:N_0\to L_0$ is a monomorphism of fields and there exists an isomorphism $\gamma:L_0\to L$, i.e.\ $L_0\overset{\scriptscriptstyle\gamma}\cong L$, which can be defined by $\dot f\mapsto\tilde f$, where $\tilde f\in\mathcal M(\tilde Z)$ is determined by $\dot f\in\mathcal M(\ddot Z)$ ($\tilde f$ may be written~$(\dot f)^{\sim}$), such that $\dot\pi^*(N_0)\overset{\scriptscriptstyle\gamma_1}\cong N$ ($\gamma_1=\gamma|_{\dot\pi^*(N_0)}$). Let $\dot F$ be the original basic function on~$\ddot Z$ and let $\dot P_0(t)\in N_0[t]$ be the minimal polynomial of~$\dot F$ over~$\dot\pi^*(N_0)$, i.e.\ the monic irreducible polynomial in~$N_0[t]$ satisfying $(\dot\pi^*\dot P_0)(\dot F)=0$. We call $\dot P_0(t)$ the \textit{minimal polynomial} of~$\ddot Z$ over~$\ddot X$.

For the $n$-sheeted natural covering map $\dot\pi:\ddot Z\to\ddot X$, suppose $B$ is the set of branch points of $\bar\pi:\bar Z\to\bar X$ ($\dot\pi=(\hat\pi,\bar\pi)$) and $U\subseteq\bar X\!\setminus\! B$ ($\dot X=(\hat X,\bar X)$) is a non-empty open set such that $\bar\pi^{-1}(U)$ is the disjoint union of open sets $V_1$, $\dots$, $V_n$ and $\bar\pi|_{V_j}\!:V_j\to U$ is biholomorphic ($j=1$, $\dots$, $n$). Let $\bar\tau_j=\hat\tau_j:U\to V_j$ be the inverse mapping of~$\bar\pi|_{V_j}$. Let $\dot f_j=\dot f\circ\dot\tau_j$, where $\dot f\in\mathcal M(\ddot Z)$ and $\dot\tau_j=(\hat\tau_j,\bar\tau_j)$. We consider the polynomial
$$
\dot Q_{\dot f}(t)=\prod_{j=1}^n(t-\dot f_j)=t^n+\dot c_1 t^{n-1}+\cdots+\dot c_n,
$$
where $\dot c_j=(-1)^j\mathbf s_j(\dot f_1,\dots,\dot f_n)$ ($\mathbf s_j$ denotes the $j$-th elementary symmetric function in $n$~variables, $j=1$, $\dots$, $n$). Similarly to \cite[\S8.1, \S8.2 and~\S8.3]{Fo} we can deduce that $\dot Q_{\dot F}(t)$ is just the minimal polynomial $\dot P_0(t)$ of~$\dot F$ over $\dot\pi^*(N_0)$. So we get
\begin{equation}\label{4.5}
\deg\dot P_0=n,
\end{equation}
where $\deg\dot P_0$ denotes the degree of $\dot P_0(t)$.

Let $\mathsf G_{\dot\sigma}(\dot f):=\dot f\circ\dot\sigma^{-1}$ for $\dot\sigma\in\deck(\ddot Z/\ddot X)$ and $\dot f\in L_0=\mathcal M(\ddot Z)$. Then $\mathsf G_{\dot\sigma}\in\gal(L_0/\dot\pi^*(N_0))$. Define
$$
\mathscr G(\dot\sigma):=\mathsf G_{\dot\sigma}
$$
for $\dot\sigma\in\deck(\ddot Z/\ddot X)$. Let $\tilde\alpha(\tilde f):=\gamma(\alpha(\dot f))$ for $\alpha\in\gal(L_0/\dot\pi^*(N_0))$ and $\dot f\in L_0$, where $\tilde f=\gamma(\dot f)$. Then $\tilde\alpha\in\gal(L/N)$. Define
$$
\beta(\alpha):=\tilde\alpha
$$
for $\alpha\in\gal(L_0/\dot\pi^*(N_0))$. Similarly to \cite[Theorem~(8.12)]{Fo} we have

\begin{lemma}\label{lemma4.4}
Suppose $\tilde Y^0$ is an original algebraic Riemann surface over a base surface~$\hat R$, $\ddot X$, $\ddot Z\in\tilde Y^0$ and $\ddot Z$ is over~$\ddot X$. Let $N_0=\mathcal M(\ddot X)$, $L_0=\mathcal M(\ddot Z)$, $N=\mathcal M(\tilde{\ddot X}):=\mathcal M(\tilde X)$ and $L=\mathcal M(\tilde{\ddot Z})$. Suppose $\dot\pi:\ddot Z\to\ddot X$ is the $n$-sheeted natural covering map and $\dot P_0(t)\in N_0[t]$ is the minimal polynomial of~$\ddot Z$ over~$\ddot X$. Then
\smallskip\par\noindent
$(1)$ $[L:N]=[L_0:\dot\pi^*(N_0)]=\operatorname{deg}\dot P_0=n$ and $\displaystyle L\cong L_0\cong N_0[t]/(\dot P_0(t));$
\par\noindent
$(2)$ $\displaystyle\deck(\ddot Z/\ddot X)\overset{\scriptscriptstyle\mathscr G}\cong\gal(L_0/\dot\pi^*(N_0))\overset{\scriptscriptstyle\beta}\cong\gal(L/N);$
\smallskip\par\noindent
$(3)$ The natural covering $\dot\pi:\ddot Z\to\ddot X$ is Galois precisely if the field extension $L/N$ $($or $L_0/\dot\pi^*(N_0))$ is Galois.
\end{lemma}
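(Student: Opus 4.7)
The plan is to mirror the proof of \cite[Theorem~(8.12)]{Fo}, but carried through in the present framework where $\ddot Z$ is a basic algebraic Riemann surface with a tied space and $\dot F$ is its original basic function. I would prove the three assertions essentially in the order stated.

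For~(1), the main fact is that Lemma~\ref{lemma3.4} says every $\dot f \in L_0 = \mathcal M(\ddot Z)$ is of the form $\dot f = (\dot\pi^* \dot Q)(\dot F)$ for some $\dot Q(t) \in N_0[t]$, i.e.\ $L_0 = \dot\pi^*(N_0)[\dot F]$. Combined with~(\ref{4.5}), which says the minimal polynomial $\dot P_0(t)$ of~$\dot F$ over~$\dot\pi^*(N_0)$ has degree~$n$, this gives $[L_0 : \dot\pi^*(N_0)] = n$ together with the ring (hence field) isomorphism $L_0 \cong N_0[t]/(\dot P_0(t))$ by the standard map $t \mapsto \dot F$. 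The equality $[L:N] = [L_0 : \dot\pi^*(N_0)]$ and $L \cong L_0$ then follow at once by transporting structure along the field isomorphism $\gamma : L_0 \to L$, since $\gamma$ carries the subfield $\dot\pi^*(N_0)$ onto~$N$.

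For~(2), I would first check that $\mathscr G$ is a group homomorphism: for $\dot\sigma, \dot\tau \in \deck(\ddot Z/\ddot X)$ and $\dot f \in L_0$,
\[
\mathsf G_{\dot\sigma \circ \dot\tau}(\dot f) = \dot f \circ (\dot\sigma \circ \dot\tau)^{-1} = (\dot f \circ \dot\tau^{-1}) \circ \dot\sigma^{-1} = \mathsf G_{\dot\sigma}(\mathsf G_{\dot\tau}(\dot f)).
\]
That $\mathsf G_{\dot\sigma}$ fixes $\dot\pi^*(N_0)$ is exactly the fiber-preserving condition $\dot\pi \circ \dot\sigma = \dot\pi$. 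Injectivity comes from the uniqueness clause of Theorem~\ref{theorem3.3}: if $\mathsf G_{\dot\sigma} = \mathrm{id}$ then in particular $\dot F \circ \dot\sigma^{-1} = \dot F$, so $\dot\sigma^{-1}$ and the identity are both fiber-preserving biholomorphisms of $\ddot Z$ with $\dot F = \dot\sigma^{-*}\dot F$, forcing $\dot\sigma = \mathrm{id}$. For surjectivity, given $\alpha \in \gal(L_0/\dot\pi^*(N_0))$, put $\dot G := \alpha(\dot F) \in L_0$; because $\alpha$ fixes $\dot\pi^*(N_0)$, $\dot G$ also satisfies $(\dot\pi^*\dot P_0)(\dot G) = 0$, so by the existence clause of Theorem~\ref{theorem3.3} applied to the triple $(\ddot Z, \dot\pi, \dot G)$ there is a unique fiber-preserving biholomorphism $\dot\tau : \ddot Z \to \ddot Z$ with $\dot G = \dot\tau^*\dot F = \dot F \circ \dot\tau$. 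Setting $\dot\sigma = \dot\tau^{-1}$ gives $\mathsf G_{\dot\sigma}(\dot F) = \dot F \circ \dot\tau = \alpha(\dot F)$, and since both $\mathsf G_{\dot\sigma}$ and $\alpha$ are $\dot\pi^*(N_0)$-algebra automorphisms of $L_0 = \dot\pi^*(N_0)[\dot F]$ agreeing on the generator~$\dot F$, they coincide. The second isomorphism $\beta$ is the transport along $\gamma$: define $\beta(\alpha)(\tilde f) := \gamma(\alpha(\gamma^{-1}(\tilde f)))$; since $\gamma$ restricts to an isomorphism $\dot\pi^*(N_0) \cong N$, $\beta$ is an isomorphism of Galois groups.

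For~(3), the natural covering $\dot\pi$ is $n$-sheeted, so by the general fact from covering theory any $\dot\sigma \in \deck(\ddot Z/\ddot X)$ is determined by its value at a single point; hence $|\deck(\ddot Z/\ddot X)| \le n$, with equality precisely when for any two points in one fiber there exists a deck transformation moving one to the other, i.e.\ precisely when $\dot\pi$ is Galois. On the other hand, $L/N$ is Galois iff $|\gal(L/N)| = [L:N] = n$. Part~(2) gives $|\deck(\ddot Z/\ddot X)| = |\gal(L/N)|$, so the two conditions coincide.

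The only nontrivial step I expect is the surjectivity in~(2), where the uniqueness/existence clause of Theorem~\ref{theorem3.3} must be applied with both the ``domain'' and the ``target'' role played by $\ddot Z$ itself; one has to check that the map produced is genuinely a deck transformation (fiber-preserving and biholomorphic on the whole $\ddot Z$, both on its universal-topological part $\hat Z$ and on the tied part $\bar Z$), and that $\mathsf G_{\dot\sigma}$ then agrees with $\alpha$ on all of $L_0$ rather than just on $\dot F$. Both points are handled cleanly by the presentation $L_0 = \dot\pi^*(N_0)[\dot F]$ from Lemma~\ref{lemma3.4}.
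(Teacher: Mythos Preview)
Your overall plan matches the paper's, and parts~(1) and~(3) are argued essentially identically. The one real issue is in the surjectivity argument for~(2).

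You apply Theorem~\ref{theorem3.3} to the triple $(\ddot Z, \dot\pi, \dot G)$ with $\ddot X$ playing the role of the base, in order to obtain directly a $\dot\pi$-fiber-preserving biholomorphism. But Theorem~\ref{theorem3.3} is formulated only for a base surface satisfying $\hat R = R$ (empty ideal-point set), and the map it produces is base-preserving over~$\dot R$, not fiber-preserving over an arbitrary intermediate~$\ddot X$. Since $\ddot X$ in general carries ideal points (the branch points), your invocation does not fit the hypotheses of the theorem as stated.

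The paper avoids this by working with the minimal polynomial $\dot P(t)\in\mathcal M(\dot R)[t]$ of~$\dot F$ over~$\mathcal M(\dot R)$ rather than $\dot P_0(t)\in N_0[t]$: since $\alpha$ fixes $\dot\pi^*(N_0)$, which contains the pullback of $\mathcal M(\dot R)$, one has $\dot P(\alpha(\dot F))=\alpha(\dot P(\dot F))=0$, and Theorem~\ref{theorem3.3} (legitimately, over~$\dot R$) yields a base-preserving biholomorphism $\dot\sigma:\ddot Z\to\ddot Z$ with $\dot F\circ\dot\sigma^{-1}=\alpha(\dot F)$. After checking $\mathsf G_{\dot\sigma}=\alpha$ on all of~$L_0$ via Lemma~\ref{lemma3.4} (exactly as you do), the paper then \emph{separately} derives $\dot\pi\circ\dot\sigma^{-1}=\dot\pi$ by evaluating $\mathsf G_{\dot\sigma}=\alpha$ at $\dot F_0\circ\dot\pi\in\dot\pi^*(N_0)$, where $\dot F_0$ is the original basic function on~$\ddot X$: this gives $\dot F_0\circ\dot\pi\circ\dot\sigma^{-1}=\dot F_0\circ\dot\pi$, whence $\dot\pi\circ\dot\sigma^{-1}=\dot\pi$. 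That final step --- confirming that the biholomorphism actually lies in $\deck(\ddot Z/\ddot X)$ rather than merely among the base-preserving automorphisms over~$\dot R$ --- is precisely what your direct appeal to Theorem~\ref{theorem3.3} skips over.
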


\begin{proof}
Noticing~(\ref{4.5}), by Lemma~\ref{lemma3.4} we can easily see that (1) is true. For (2), we show that $\mathscr G$ is surjective as follows.

Suppose $\alpha\in\gal(L_0/\dot\pi^*(N_0))$ and $\dot F$ is the original basic function on~$\ddot Z$. Then $\dot P(\alpha(\dot F))=\alpha(\dot P(\dot F))=0$, where $\dot P(t)\in\mathcal M(\dot R)[t]$ is the minimal polynomial of~$\dot F$ over~$\mathcal M(\dot R)$. By Theorem~\ref{theorem3.3}, there exists a base-preserving biholomorphic mapping $\dot\sigma:\ddot Z\to\ddot Z$ such that $\mathsf G_{\dot\sigma}(\dot F)=\dot F\circ\dot\sigma^{-1}=\alpha(\dot F)$. For $\dot f\in\mathcal M(\ddot Z)$ there exists $\dot Q(t)\in\mathcal M(\dot R)[t]$ such that $\dot f=\dot Q(\dot F)$ by Lemma~\ref{lemma3.4}. Hence,
$$
\mathsf G_{\dot\sigma}(\dot f)=\dot f\circ\dot\sigma^{-1}=\dot Q(\dot F)\circ\dot\sigma^{-1}=\dot Q(\dot F\circ\dot\sigma^{-1})=\dot Q(\alpha(\dot F))=\alpha(\dot Q(\dot F))=\alpha(\dot f).
$$
Specially for the original basic function~$\dot F_0$ on~$\ddot X$, since $\dot F_0\circ\dot\pi\in\dot\pi^*(N_0)$, we have
$$
\dot F_0\circ\dot\pi\circ\dot\sigma^{-1}=\alpha(\dot F_0\circ\dot\pi)=\dot F_0\circ\dot\pi.
$$
So it follows $\dot\pi\circ\dot\sigma^{-1}=\dot\pi$, which means $\dot\sigma\in\deck(\ddot Z/\ddot X)$.

Assertion~(3) follows from assertions~(1) and~(2) and that $L/N$ is Galois precisely if $|\gal(L/N)|=[L:N]$ (see \cite[Corollary~2.16]{Mo}).
\end{proof}

\begin{remark}\label{remark12}
Because of the isomorphism~$\beta$ we may use $\alpha$ instead of~$\tilde\alpha$.
\end{remark}
\smallskip

In the following we will write ``$=$" instead of ``$\overset{\scriptscriptstyle\mathrm{map}}=$" for the equality of mappings on algebraic Riemann surfaces.

\begin{lemma}\label{lemma4.5}
Suppose $\tilde Z\leqslant\tilde W$ $(\tilde Z$, $\tilde W\in\operatorname{Int}(\tilde Y\!/\tilde X))$. Let $L=\mathcal M(\tilde Z)$ and $M=\mathcal M(\tilde W)$. For $\tilde\sigma\in\deck(\tilde W\!/\tilde Z)$ define
$$
\mathsf G_{\tilde\sigma}(\tilde f):=\tilde f\circ\tilde\sigma^{-1},
$$
where $\tilde f\in M$. Then $\mathsf G_{\tilde\sigma}\in\gal(M/L)$ and the mapping
\begin{equation*}
\begin{split}
\mathcal G=\mathcal G_{\tilde W\!/\tilde Z}:\,\deck(\tilde W\!/\tilde Z)&\to\gal(M/L) \\
\tilde\sigma&\mapsto \mathsf G_{\tilde\sigma}
\end{split}
\end{equation*}
is a group isomorphism. Moreover, $\tilde W\!/\tilde Z$ is Galois if and only if $M/L$ is Galois.
\end{lemma}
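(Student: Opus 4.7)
The plan is to reduce Lemma~\ref{lemma4.5} to Lemma~\ref{lemma4.4} by passing to suitable holographic elements. Given $\tilde Z\leqslant\tilde W$, I would fix an original basic algebraic Riemann surface $\ddot W_0=(\hat\psi;\dot W_0)\in\tilde W^0$ large enough that it is a holographic element of~$\tilde W$ and the natural covering map produces $\ddot Z_0\in\tilde Z^0$ sitting under it via a Galois/non-Galois directly up-harmonious $\dot\pi_0\colon\ddot W_0\to\ddot Z_0$ that is the restriction of the natural covering $\tilde\pi\colon\tilde W\to\tilde Z$. The canonical field isomorphisms $\mathcal M(\ddot W_0)\cong M$ and $\mathcal M(\ddot Z_0)\cong L$ (given by $\dot f\mapsto\tilde f$, the class of~$\dot f$) together with Lemma~\ref{lemma4.4} then carry all the content over from the original-basic setting.

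First I would verify that $\mathsf G_{\tilde\sigma}\in\gal(M/L)$ and that $\mathcal G$ is a group homomorphism. Since $\tilde\sigma$ is a fiber-preserving exact homeomorphism of the holomorphic covering $\tilde\pi$ (as remarked at the end of Subsection~3.4, exact deck transformations of holomorphic coverings are biholomorphic), precomposition by $\tilde\sigma^{-1}$ preserves meromorphy, so $\mathsf G_{\tilde\sigma}$ is a ring automorphism of~$M$; moreover any $\tilde f\in L$ is of the form $\tilde g\circ\tilde\pi$ (under the embedding $\mathcal M(\tilde Z)\hookrightarrow\mathcal M(\tilde W)$), hence $\tilde f\circ\tilde\sigma^{-1}=\tilde g\circ\tilde\pi\circ\tilde\sigma^{-1}=\tilde g\circ\tilde\pi=\tilde f$ because $\tilde\pi\circ\tilde\sigma=\tilde\pi$. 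That $\mathcal G(\tilde\sigma_1\circ\tilde\sigma_2)=\mathcal G(\tilde\sigma_1)\circ\mathcal G(\tilde\sigma_2)$ is immediate. Injectivity then follows by restricting to~$\ddot W_0$: if $\mathcal G(\tilde\sigma)=\mathrm{id}_M$ then in particular $\dot F_0\circ\dot\sigma_0^{-1}=\dot F_0$ for $\dot\sigma_0:=\tilde\sigma|_{\ddot W_0}$ and the original basic function $\dot F_0$ on~$\ddot W_0$, and since $\dot F_0$ separates fibers of~$\dot\pi_0$ (Lemma~\ref{lemma3.4} together with connectedness of~$\ddot W_0$) we get $\dot\sigma_0=\mathrm{id}$ and hence $\tilde\sigma=\mathrm{id}_{\tilde W}$ by propagation along the analytically up-harmonious mappings in~$\tilde W$.

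The main obstacle is surjectivity. Given $\alpha\in\gal(M/L)$, transport it to $\alpha_0\in\gal(\mathcal M(\ddot W_0)/\dot\pi_0^{*}\mathcal M(\ddot Z_0))$ via the above field isomorphisms, and apply Lemma~\ref{lemma4.4}(2) to obtain $\dot\sigma_0\in\deck(\ddot W_0/\ddot Z_0)$ with $\mathcal G_{\ddot W_0/\ddot Z_0}(\dot\sigma_0)=\alpha_0$. I would then extend $\dot\sigma_0$ to a mapping $\tilde\sigma$ on all of~$\tilde W$ as follows: for every $\ddot W\in\tilde W$ choose $\dot\lambda\colon\ddot W\looparrowright\ddot W_0$ given by holographicity, set $\tilde\sigma|_{\ddot W}:=\dot\lambda^{-1}\circ\dot\sigma_0\circ\dot\lambda$ on a suitable covering, and collect all such local expressions. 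The substantive work is to check that the resulting family satisfies the compatibility condition of Subsection~3.4 and defines a single mapping $\tilde\sigma\in\deck(\tilde W/\tilde Z)$; this uses that $\dot\sigma_0$ commutes with base-preserving biholomorphic mappings over~$\ddot Z_0$ (so different choices of~$\dot\lambda$ give equivalent expressions) and that the natural covering $\tilde\pi$ restricts compatibly across the level. By construction $\mathcal G(\tilde\sigma)=\alpha$.

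For the last assertion, $\tilde W/\tilde Z$ is Galois by definition precisely when $\dot\pi_0\colon\ddot W_0\to\ddot Z_0$ is Galois (for some, equivalently every, such~$\dot\pi_0$ in~$\tilde\pi$). By Lemma~\ref{lemma4.4}(3) this is equivalent to $\mathcal M(\ddot W_0)/\dot\pi_0^{*}\mathcal M(\ddot Z_0)$ being Galois, which via the canonical isomorphisms is equivalent to $M/L$ being Galois. Within the entire argument, the delicate point is the gluing in the surjectivity step, but it reduces to mechanical unwinding of the compatibility relation ``$\overset{\scriptscriptstyle\mathrm{map}}=$'' between expression mappings together with Proposition~\ref{proposition3.5}.
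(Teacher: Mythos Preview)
Your approach has a genuine gap: you assume $\tilde W$ admits a holographic element~$\ddot W_0$, but in the paper's framework an algebraic Riemann surface need not have one. A holographic element of~$\tilde W$ is an $\ddot W_0$ over every $\ddot W\in\tilde W$, which amounts to the natural label field $\mathsf L(\tilde W)$ being of the form $\mathcal M_{r_0}(\hat\psi)$ for a single germ~$\hat\psi$; by the primitive element theorem this forces $[\tilde W:\hat R]<\infty$. Since Lemma~\ref{lemma4.5} is precisely what feeds into the infinite Galois correspondence of Theorem~\ref{theorem4.1} (and into the construction of the Krull topology on~$D$), the infinite case is not a corner case you can ignore. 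Remark~\ref{remark10} makes this explicit: working with holographic elements is a simplification available only when they exist.

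Consequently each of your three steps breaks down in the infinite setting. For injectivity and surjectivity the paper does not restrict to a single~$\ddot W_0$; instead it works level by level: given $\alpha\in\gal(M/L)$ it constructs, for every $\ddot S\in\tilde W^0$, a base-preserving biholomorphism $\dot\sigma(\alpha,\ddot S):\ddot S\to\ddot S'$ where $\ddot S'=\operatorname{obdom}(\alpha(\tilde F))$ is in general a \emph{different} labelled surface, and then checks the exact compatibility~(\ref{4.12}) between any two such $\dot\sigma_j$ so that the family assembles into a mapping $\tilde\sigma:\tilde W\to\tilde W$ in the sense of Subsection~3.4. Your proposed formula $\dot\lambda^{-1}\circ\dot\sigma_0\circ\dot\lambda$ presupposes a global~$\dot\lambda$ down to a fixed~$\ddot W_0$, which is unavailable. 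Likewise, your argument for the Galois equivalence misreads the definition: $\tilde W/\tilde Z$ Galois means that for every $\dot p\in\tilde\pi$ there is some $\dot q\in\tilde\pi$ \emph{over}~$\dot p$ which is Galois, not that a single fixed $\dot\pi_0$ is Galois; the paper's part~(iii) therefore argues via finite subextensions $M_1/L_1$ rather than via a holographic~$\ddot W_0$.
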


\begin{proof}
At first, $\mathsf G_{\tilde\sigma}\in\gal(M/L)$ since $\tilde f\circ\tilde\pi=\tilde f$\, for $\tilde f\in L=\mathcal M(\tilde Z)$, where $\tilde\pi:\tilde W\to\tilde Z$ is the natural covering map. For $\tilde\sigma$, $\tilde\tau\in\deck(\tilde W\!/\tilde Z)$ we have
$$
\mathcal G(\tilde\sigma\circ\tilde\tau)=\mathsf G_{\tilde\sigma\circ\tilde\tau}=\mathsf G_{\tilde\sigma}\circ \mathsf G_{\tilde\tau}=\mathcal G(\tilde\sigma)\circ\mathcal G(\tilde\tau),
$$
since
$$
\mathsf G_{\tilde\sigma\circ\tilde\tau}(\tilde f)=\tilde f\circ(\tilde\sigma\circ\tilde\tau)^{-1}=\tilde f\circ\tilde\tau^{-1}\circ\tilde\sigma^{-1}=(\mathsf G_{\tilde\sigma}\circ \mathsf G_{\tilde\tau})(\tilde f)
$$
for $\tilde f\in M$. Hence, $\mathcal G$ is a group homomorphism.

(i) Suppose $\mathcal G(\tilde\sigma)=\mathsf G_{\tilde\sigma}=\id_M$ for $\tilde\sigma\in\deck(\tilde W\!/\tilde Z)$. Then for every $\tilde f\in M$ we have $\mathsf G_{\tilde\sigma}(\tilde f)=\tilde f$, i.e.\ $\tilde f\circ\tilde\sigma^{-1}=\tilde f$. For $\ddot S=(\hat\varphi;\dot S)\in\tilde W^0$ (the original algebraic Riemann surface corresponding to~$\tilde W$) over some $\ddot S_0\in\tilde W^0$, let $\tilde F\in M$ be determined by the original basic function~$\dot F$ on~$\ddot S$. Then $\ddot S=\operatorname{obdom}(\tilde F)$ by Lemma~\ref{lemma4.2}. Since $\tilde F\circ\tilde\sigma^{-1}=\tilde F$, i.e.\ \,$\tilde F\circ\tilde\sigma=\tilde F$, there exists $\dot\sigma:\ddot W_1\to\ddot W_2$ in~$\tilde\sigma$, where $\ddot S\overset{\scriptscriptstyle\dot\mu}\leftrightarrow\ddot W_2$ and $\ddot W_1$ is also an expression domain of~$\tilde F$ in~$\tilde W$, such that
\begin{equation}\label{4.6}
\tilde F|_{\ddot W_2}\circ\dot\sigma=\tilde F|_{\ddot W_1}.
\end{equation}
By Lemma~\ref{lemma4.2}, there exists an analytically up-harmonious mapping $\dot\pi_1:\ddot W_1\to\ddot S$ such that
\begin{equation}\label{4.7}
\tilde F|_{\ddot W_1}=\dot F\circ\dot\pi_1.
\end{equation}
By (\ref{4.6}) and~(\ref{4.7}) it follows $\dot F\circ\dot\mu\circ\dot\sigma=\dot F\circ\dot\pi_1$. So we get $\dot\mu\circ\dot\sigma=\dot\pi_1$, i.e.\ $\dot\mu\circ\dot\sigma=\id_{\ddot S}\circ\dot\pi_1$, which mean $\ddot S\overset{\scriptscriptstyle\dot\pi_1}\leftrightarrow\ddot W_1$ (since $\dot\mu$ and~$\dot\sigma$ are biholomorphic) and $\dot\sigma\thicksim\id_{\ddot S}$. Therefore, $\tilde\sigma=\id_{\tilde W}$ and then $\mathcal G$ is injective.

(ii) Now we show that $\mathcal G$ is surjective. Suppose $\alpha\in\gal(M\!/\!L)$. Suppose $\ddot S=(\hat\varphi;\dot S)\in\tilde W^0$ and $\dot F$ is the original basic function on~$\ddot S$. Let $\tilde F\in M$ be determined by~$\dot F$. Suppose $\ddot S'=(\hat\varphi';\dot S')=\operatorname{obdom}(\alpha(\tilde F))$. Then $\ddot S'\in\tilde W^0$ by Lemma~\ref{lemma4.2}. Since
$$
\dot P(\alpha(\tilde F)|_{\ddot S'})=\dot P(\alpha(\tilde F))|_{\ddot S'}=\alpha(\dot P(\tilde F))|_{\ddot S'}=0,
$$
where $\dot P(t)\in\mathcal M(\dot R)[t]$ is the minimal polynomial of~$\dot F$ over~$\mathcal M(\dot R)$, there exists a base-preserving biholomorphic mapping $\dot\sigma=\dot\sigma(\alpha,\ddot S):\ddot S\to\ddot S'$ such that
\begin{equation}\label{4.8}
\dot F'\circ\dot\sigma=\dot F,
\end{equation}
where $\dot F'=\alpha(\tilde F)|_{\ddot S'}$, by Theorem~\ref{theorem3.3} (cf.\ \cite[Theorem~(8.9)]{Fo}).
Let
$$
\tilde\sigma=\tilde\sigma_{\alpha}:=\{\dot\sigma(\alpha,\ddot S):\,\ddot S\in\tilde W^0\}.
$$

Given $\ddot S'\in\tilde W^0$, suppose $\dot F'$ is the original basic function on~$\ddot S'$ and $\tilde F'\in M$ is determined by~$\dot F'$, i.e.\ $\tilde F'|_{\ddot S'}=\dot F'$. Let $\tilde F=\alpha^{-1}(\tilde F')$, $\ddot S=\operatorname{obdom}(\tilde F)$ and $\dot F=\tilde F|_{\ddot S}$. Then similarly to the above we can deduce that there is a unique mapping $\dot\sigma(\alpha,\ddot S):\ddot S\to\ddot S'$ in~$\tilde\sigma$ (satisfying~(\ref{4.8})) corresponding to~$\ddot S'$ and~$\alpha$. Moreover, we can deduce that $\dot\sigma(\alpha^{-1},\ddot S'):\ddot S'\to\ddot S$ is just the inverse of~$\dot\sigma(\alpha,\ddot S)$.

\smallskip
\begin{remark}\label{remark13}
In the above definition of~$\tilde\sigma$, we may assume the expression domain~$\ddot S'$ of $\alpha(\tilde F)$ is replaced by~$\ddot S$ if necessary provided that $\ddot S'\overset{\scriptscriptstyle\dot\lambda}\leftrightarrow\ddot S$ for $\dot\lambda\in\bar\Lambda$ $($the analytically harmonious relation$)$. To see that this assumption is reasonable, we consider different $\ddot S$ and~$\ddot S'$, which are analytically harmonious modulo $\dot\lambda:\ddot S\to\ddot S'$. Then by Theorem~\ref{theorem3.3} there are base-preserving biholomorphic mappings $\dot\sigma:\ddot S\to\ddot S'$ and $\dot\sigma_0:\ddot S\to\ddot S$ such that $\alpha(\tilde F)|_{\ddot S'}=\dot F\circ\dot\sigma^{-1}$ and $\alpha(\tilde F)|_{\ddot S}=\dot F\circ\dot\sigma_0^{-1}$, where $\dot F$ is the original basic function on~$\ddot S$ and $\tilde F|_{\ddot S}=\dot F$. Therefore,
$$
\dot F\circ\dot\sigma_0^{-1}=\alpha(\tilde F)|_{\ddot S}=\alpha(\tilde F)|_{\ddot S'}\circ\dot\lambda=\dot F\circ\dot\sigma^{-1}\circ\dot\lambda.
$$
We deduce $\dot\sigma_0=\dot\lambda^{-1}\circ\dot\sigma$, i.e.\ $\dot\sigma_0\thicksim\dot\sigma$, since $\dot F$ is the original basic function on~$\ddot S$. Similarly, $\ddot S'$ may also be replaced by another~$\ddot S'_1$, provided that $\ddot S'\leftrightarrow\ddot S'_1$.
\end{remark}

\begin{remark}\label{remark14}
In fact, by~$(\ref{4.8})$ we can deduce that $\dot S=\dot S'$ $(\dot F'=\alpha(\tilde F)|_{\ddot S'}$ is the original basic function on~$\ddot S'$ by Lemma~$\ref{lemma4.2}$ since $\ddot S'=\operatorname{obdom}(\alpha(\tilde F)))$ and $\dot\sigma(\alpha,\dot S):\dot S\to\dot S'$ is an identical mapping. But generally $\ddot S\ne\ddot S'$ (for $\alpha\ne\id$) and so $\dot\sigma(\alpha,\ddot S):\ddot S\to\ddot S'$ is not any identical mapping essentially.
\end{remark}
\smallskip

Suppose $\ddot S_1$, $\ddot S_2\in\tilde W^0$. Suppose $\dot F_1$ and~$\dot F_2$ are the original basic functions on $\ddot S_1$ and~$\ddot S_2$, respectively. If there is a direct up-harmonious mapping $\dot p:\ddot S_2\to\ddot S_1$, then by Lemma~\ref{lemma3.4} there exists $\dot P(t)\in\mathcal M(\dot R)[t]$ such that
\begin{equation}\label{4.9}
\dot F_1\circ\dot p=\dot P(\dot F_2).
\end{equation}
Suppose $\tilde F_1$ and~$\tilde F_2$ in~$M$ are determined by $\dot F_1$ and~$\dot F_2$ respectively, i.e.\ $\tilde F_1|_{\ddot S_1}=\dot F_1$ and $\tilde F_2|_{\ddot S_2}=\dot F_2$. Then
$$
\dot P(\tilde F_2)|_{\ddot S_2}=\dot P(\tilde F_2|_{\ddot S_2})=\dot P(\dot F_2)=\tilde F_1|_{\ddot S_1}\circ\dot p
$$
by~(\ref{4.9}), so that $\dot P(\tilde F_2)=\tilde F_1$. Hence $\alpha(\tilde F_1)=\alpha(\dot P(\tilde F_2))=\dot P(\alpha(\tilde F_2))$. Let $\tilde F'_j=\alpha(\tilde F_j)$ ($j=1$, $2$). Then
\begin{equation}\label{4.10}
\tilde F'_1=\dot P(\tilde F'_2).
\end{equation}
Let $\dot\sigma_j:\ddot S_j\to\ddot S'_j$ ($j=1$, $2$) be base-preserving biholomorphic mappings such that
\begin{equation}\label{4.11}
\tilde F'_j|_{\ddot S'_j}=\dot F_j\circ\dot\sigma_j^{-1},
\end{equation}
where $\ddot S'_j=(\hat\varphi'_j;\dot S'_j)=\operatorname{obdom}(\tilde F'_j)$.

Suppose $\ddot S'_0=(\hat\varphi'_0;\dot S'_0)\in\tilde W$ is an original algebraic Riemann surface over $\ddot S'_1$ and~$\ddot S'_2$, and $\dot F'_0$ is the original basic function on~$\ddot S'_0$ (refer to Lemma~\ref{lemma3.7}). Then by Lemma~\ref{lemma3.4} there exists $\dot Q_j(t)\in\mathcal M(\dot R)[t]$ such that $\tilde F'_j|_{\ddot S'_j}\circ\dot\pi_j=\tilde F'_j|_{\ddot S'_0}=\dot Q_j(\dot F'_0)$ ($j=1$, $2$), where $\dot\pi_j:\ddot S'_0\to\ddot S'_j$ ($j=1$, $2$) are directly up-harmonious mappings. Therefore,
$$
\dot Q_1(\dot F'_0)=\tilde F'_1|_{\ddot S'_0}=\dot P(\tilde F'_2)|_{\ddot S'_0}=\dot P(\tilde F'_2|_{\ddot S'_0})=\dot P(\dot Q_2(\dot F'_0))
$$
by~(\ref{4.10}) and $\hat\varphi'_j=\hat Q_j(\hat\varphi'_0)$ ($j=1$, $2$). Consequently $\hat Q_1(\hat\varphi'_0))=\hat P(\hat Q_2(\hat\varphi'_0))$, i.e.
$$
\hat\varphi'_1=\hat P(\hat\varphi'_2).
$$
Suppose $\dot p':\ddot S'_2\to\ddot S'_1$ is the corresponding directly up-harmonious mapping (see Lemma~\ref{lemma3.6}). Then
\begin{equation*}
\begin{split}
\tilde F'_1|_{\ddot S'_1}\circ\dot p'&=\tilde F'_1|_{\ddot S'_2}=\dot P(\tilde F'_2)|_{\ddot S'_2} \\
&=\dot P(\tilde F'_2|_{\ddot S'_2})=\dot P(\dot F_2\circ\dot\sigma_2^{-1})=\dot P(\dot F_2)\circ\dot\sigma_2^{-1}
\end{split}
\end{equation*}
by (\ref{4.10}) and~(\ref{4.11}), i.e.\ $\dot F_1\circ\dot\sigma_1^{-1}\circ\dot p'=\dot F_1\circ\dot p\circ\dot\sigma_2^{-1}$ by (\ref{4.9}) and~(\ref{4.11}). Since $\dot F_1$ is the original basic function on~$\ddot S_1$, we get
\begin{equation}\label{4.12}
\dot\sigma_1\circ\dot p=\dot p'\circ\dot\sigma_2.
\end{equation}
Therefore, $\dot\sigma_1$ and~$\dot\sigma_2$ are directly compatible. In fact, $\dot\sigma_1$ and~$\dot\sigma_2$ are exactly and directly compatible since from~(\ref{4.12}) we can deduce
$
(\dot p')^{-1}\circ\dot\sigma_1=\dot\sigma_2\circ\dot p^{-1}
$
(see Remark~\ref{remark8}). By the reasoning above we can see that $\tilde\sigma$ is compatible and satisfies the two conditions of a mapping. Hence $\tilde\sigma$ is a mapping from $\tilde W$ to~$\tilde W$. We can also see that if $\operatorname{codom}(\dot\sigma_2)$ is over $\operatorname{codom}(\dot\sigma_1)$ then $\dot\sigma_2$ is exactly over~$\dot\sigma_1$. Therefore, it is easy to know that $\tilde\sigma$ is a biholomorphic transformation on~$\tilde W$. For $\tilde f\in M$, by~(\ref{4.8}) we have
$$
\mathsf G_{\tilde\sigma}(\tilde f)=\tilde f\circ\tilde\sigma^{-1}=(\tilde f|_{\ddot S}\circ\dot\sigma^{-1})^{\sim}=(\alpha(\tilde f)|_{\ddot S'})^{\sim}=\alpha(\tilde f),
$$
where $\ddot S=\operatorname{obdom}(\tilde f)$, $\dot\sigma=\dot\sigma(\alpha,\ddot S)\in\tilde\sigma$, $\ddot S'=\operatorname{obdom}(\alpha(\tilde f))$ and $(\dot g)^{\sim}$ denotes~$\tilde g=\gamma(\dot g)$, which is determined by~$\dot g$.

Suppose $\tilde\pi:\tilde W\to\tilde Z$ is the natural covering map and $\dot\pi\in\tilde\pi$ is the natural covering map from $\ddot W\in\tilde W^0$ to $\ddot Z\in\tilde Z^0$ (thus $\ddot Z\in\tilde W^0$). In the above reasoning, let $\ddot S_1=\ddot Z$ and $\ddot S_2=\ddot W$. Since $\alpha|_L=\id_L$, letting $\tilde F_1\in L$ be determined by the original basic function~$\dot F_1$ on~$\ddot S_1$, then $\alpha(\tilde F_1)=\tilde F_1$ and $\ddot S'_1=\operatorname{obdom}(\alpha(\tilde F_1))=\ddot S_1$ by Lemma~\ref{lemma4.2}. By~(\ref{4.8}), letting $\dot F=\dot F_1$ and $\dot F'=\alpha(\tilde F_1)|_{\ddot S'_1}=\dot F_1$, we have $\dot\sigma_1=\id_{\ddot S_1}$. Then we obtain a base-preserving biholomorphic mapping $\dot\sigma=\dot\sigma_2:\ddot W\to\ddot W'$ ($\ddot W'=\ddot S'_2$) such that $\dot\pi'\circ\dot\sigma=\dot\pi$ (see~(\ref{4.12})), where $\dot\pi':\ddot W'\to\ddot Z$ is a maximal natural covering map in~$\tilde\pi$, by the preceding reasoning in the proof (in fact, $\dot\sigma(\alpha,\dot W):\dot W\to\dot W'=\dot W$ is an identical mapping by Remark~\ref{remark14}). Easily we see that $\tilde\pi\circ\tilde\sigma=\tilde\pi$, which means $\tilde\sigma\in\deck(\tilde W\!/\tilde Z)$. Therefore, $\mathcal G$ is surjective.

(iii) In the following we prove that $\tilde W\!/\tilde Z$ is Galois if and only if $M/L$ is Galois.

Now we assume the natural covering map $\tilde\pi:\tilde W\to\tilde Z$ is Galois. For $\tilde f\in M$ we will prove that the minimal polynomial $\min(L,\tilde f)$ of~$\tilde f$ over~$L$ splits in~$M$. Suppose $\min(L,\tilde f)=t^n+\tilde c_1t^{n-1}+\cdots+\tilde c_n$. Suppose $L_0=\mathcal M(\dot R)(\tilde c_1,\dots\tilde c_n)$ and $L_0\leqslant L_1\leqslant L$ (for fields $L_1$ and~$L_2$, by $L_1\leqslant L_2$ we mean $L_1$ is a subfield of~$L_2$). Then $\min(L_1,\tilde f)=\min(L,\tilde f)$ and there exists $\tilde g\in L_0$ such that $L_0=\mathcal M(\dot R)(\tilde g)$. Suppose $M_0=L_0(\tilde f)$. Then there exists $\tilde h\in M_0$ such that $M_0=\mathcal M(\dot R)(\tilde h)$. Let $\ddot W_0=\operatorname{obdom}(\tilde h)$ and $\ddot Z_0=\operatorname{obdom}(\tilde g)$. Then $\ddot Z_0\in\tilde Z$ and $\ddot W_0\in\tilde W$ by Lemma~\ref{lemma4.2}. Since $\tilde\pi:\tilde W\to\tilde Z$ is Galois, there exists a Galois covering map $\dot\pi_1:\ddot W_1\to\ddot Z_1$ in~$\tilde\pi$ with $\ddot Z_0\leqslant\ddot Z_1$ and $\ddot W_0\leqslant\ddot W_1$. Therefore $M_1/L_1$, where $M_1=\mathcal M(\tilde{\ddot W}_1)$ and $L_1=\mathcal M(\tilde{\ddot Z}_1)$, is Galois by Lemma~\ref{lemma4.4}(3), which means $\min(L,\tilde f)=\min(L_1,\tilde f)$ splits in~$M_1$, hence in~$M$.

At last, assume $M/L$ is Galois. For a natural covering map $\dot\pi_0:\ddot W_0\to\ddot Z_0$ in~$\tilde\pi$, where $\ddot W_0\in\tilde W$ and $\ddot Z_0\in\tilde Z$, let $M_0=\mathcal M(\tilde{\ddot W}_0)$ and $L_0=\mathcal M(\tilde{\ddot Z}_0)$. Then there exists $M_1\leqslant M$ such that $M_1/M_0$ is a finite extension and $M_1/L'_0$ is a Galois extension, where $M_0=\mathcal M(\dot R)(\tilde f_1)$, $\min(L,\tilde f_1)=t^n+\tilde c_1t^{n-1}+\cdots+\tilde c_n$ and $L'_0=L_0(\tilde c_1,\dots\tilde c_n)$, since $[M_0:\mathcal M(\dot R)]$ is finite and $M/L$ is Galois. In fact, we may let $M_1=L'_0(\tilde f_1,\dots,\tilde f_n)$, where $\tilde f_1$, $\dots$, $\tilde f_n$ are all the roots of $\min(L,\tilde f_1)$ in~$M$. Suppose $M_1=\mathcal M(\dot R)(\tilde g)$ for some $\tilde g\in M_1$ and $\ddot W_1=\operatorname{obdom}(\tilde g)$. Take a natural covering map $\dot\pi_1:\ddot W_1\to\ddot Z_1$ in~$\tilde\pi$ over $\dot\pi_0:\ddot W_0\to\ddot Z_0$, where $\ddot Z_1\in\tilde Z$, and let $L_1=\mathcal M(\tilde{\ddot Z}_1)$. Then by the definition of natural covering maps of algebraic Riemann surfaces (the definition before Remark~\ref{remark10}) and by Lemma~\ref{lemma4.3} we have $L'_0\leqslant L_1\leqslant M_1$ and so $M_1/L_1$ is Galois. Therefore, $\dot\pi_1:\ddot W_1\to\ddot Z_1$ is Galois by Lemma~\ref{lemma4.4}(3).
\end{proof}

Suppose $\tilde X$ and~$\tilde Y$ are algebraic Riemann surfaces and $\tilde Y$ is Galois over~$\tilde X$. Suppose $\tilde Z\in\operatorname{Int}(\tilde Y\!/\tilde X)$ and $\tilde\sigma:\tilde Y\to\tilde Y'$ is a mapping, where $\tilde Y'$ is some algebraic Riemann surface. Define
$$
\tilde\sigma|_{\tilde Z}:=\{\tilde\sigma|_{\ddot Z}:\ddot Z\in\tilde Z\}.
$$
If $\tilde\sigma|_{\tilde Z}:\tilde Z\to\tilde Z'$ is still a mapping, where $\tilde Z'$ is some algebraic Riemann surface under~$\tilde Y'$, then we call $\tilde\sigma|_{\tilde Z}$ a \textit{restriction} of~$\tilde\sigma$ to~$\tilde Z$.

By Lemma~\ref{lemma4.5} and \cite[Theorem~3.28]{Mo} and by the reasoning in part~(ii) of the proof of Lemma~\ref{lemma4.5} we deduce

\begin{lemma}\label{lemma4.6}
Suppose $\tilde Y$ is Galois over~$\tilde X$ and $\tilde Z\in\operatorname{Int}(\tilde Y\!/\tilde X)$. If $\tilde Z/\tilde X$ is Galois and $\tilde\sigma\in\deck(\tilde Y\!/\tilde X)$, then $\tilde\sigma|_{\tilde Z}\in\deck(\tilde Z/\tilde X)$ and for $\tilde\tau\in\deck(\tilde Z/\tilde X)$ there is a $\tilde\sigma\in\deck(\tilde Y\!/\tilde X)$ with $\tilde\sigma|_{\tilde Z}=\tilde\tau$. \hfill $\square$
\end{lemma}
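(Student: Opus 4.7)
The plan is to transport everything to the Galois-theoretic side via the isomorphisms $\mathcal G_{\tilde Y/\tilde X}$ and $\mathcal G_{\tilde Z/\tilde X}$ of Lemma~\ref{lemma4.5}, apply the classical restriction/extension theorem for Galois field extensions (Theorem~3.28 of~\cite{Mo}), and then translate back using the explicit construction of deck transformations from automorphisms given in part~(ii) of the proof of Lemma~\ref{lemma4.5}. Throughout, I write $N=\mathcal M(\tilde X)$, $L=\mathcal M(\tilde Z)$, $M=\mathcal M(\tilde Y)$. By the ``moreover'' clause of Lemma~\ref{lemma4.5}, $M/N$ and $L/N$ are both Galois, so the classical theorem gives a surjective group homomorphism $\rho:\gal(M/N)\to\gal(L/N)$, $\alpha\mapsto\alpha|_L$, and in particular $\alpha(L)=L$ for every $\alpha\in\gal(M/N)$.

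For the first assertion, I would start with $\tilde\sigma\in\deck(\tilde Y\!/\tilde X)$ and set $\alpha=\mathcal G_{\tilde Y/\tilde X}(\tilde\sigma)\in\gal(M/N)$. The stability $\alpha(L)=L$ means: for every $\tilde F\in L$ determined by the original basic function $\dot F$ on some $\ddot Z\in\tilde Z^0$, we have $\alpha(\tilde F)\in L$, so $\operatorname{obdom}(\alpha(\tilde F))\in\tilde Z^0$ by Lemma~\ref{lemma4.2}. Running the construction of part~(ii) of the proof of Lemma~\ref{lemma4.5} with this $\alpha$ restricted to $L$ then produces, for each $\ddot Z\in\tilde Z^0$, a base-preserving biholomorphism $\dot\sigma(\alpha|_L,\ddot Z):\ddot Z\to\operatorname{obdom}(\alpha(\tilde F))\in\tilde Z^0$, whose collection is exactly what $\tilde\sigma|_{\tilde Z}$ must produce on the original level. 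The key compatibility to verify is that this collection agrees with the restriction of $\tilde\sigma$'s expression mappings to those expression domains lying in $\tilde Z$; but this is forced because both constructions are characterized by the same equation $\dot F\circ\dot\sigma^{-1}=\alpha(\tilde F)|_{\operatorname{obdom}(\alpha(\tilde F))}$ on $\ddot Z$. One then extends by direct up-harmonious mappings to all of $\tilde Z$ using the compatibility argument (equation~(\ref{4.12})) of part~(ii). Thus $\tilde\sigma|_{\tilde Z}$ is a fiber-preserving biholomorphism of $\tilde Z$ over $\tilde X$ whose Galois counterpart is $\alpha|_L=\rho(\alpha)$, so $\mathcal G_{\tilde Z/\tilde X}(\tilde\sigma|_{\tilde Z})=\rho(\mathcal G_{\tilde Y/\tilde X}(\tilde\sigma))$.

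For the second assertion, given $\tilde\tau\in\deck(\tilde Z/\tilde X)$, set $\beta=\mathcal G_{\tilde Z/\tilde X}(\tilde\tau)\in\gal(L/N)$. By surjectivity of $\rho$, choose $\alpha\in\gal(M/N)$ with $\alpha|_L=\beta$, and set $\tilde\sigma=\mathcal G_{\tilde Y/\tilde X}^{-1}(\alpha)\in\deck(\tilde Y\!/\tilde X)$. Then by the first assertion $\tilde\sigma|_{\tilde Z}\in\deck(\tilde Z/\tilde X)$ and $\mathcal G_{\tilde Z/\tilde X}(\tilde\sigma|_{\tilde Z})=\rho(\alpha)=\beta=\mathcal G_{\tilde Z/\tilde X}(\tilde\tau)$; injectivity of $\mathcal G_{\tilde Z/\tilde X}$ then gives $\tilde\sigma|_{\tilde Z}=\tilde\tau$.

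The main obstacle, and the only thing that is not a formal consequence of Lemma~\ref{lemma4.5} together with classical Galois theory, is checking that the set-theoretic restriction $\{\tilde\sigma|_{\ddot Z}:\ddot Z\in\tilde Z\}$ is actually a mapping from $\tilde Z$ to $\tilde Z$ (and not merely into $\tilde Y$) and that its codomains match those produced by the construction of part~(ii) of Lemma~\ref{lemma4.5}. This is precisely where the Galois hypothesis on $\tilde Z/\tilde X$ enters through the stability $\alpha(L)=L$: without it, $\tilde\sigma$ could map an expression domain belonging to $\tilde Z$ to one outside $\tilde Z$, and the restriction would fail to satisfy conditions~(1) and~(2) in the definition of a mapping between algebraic Riemann surfaces. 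Verifying this stability at the geometric level, via Lemma~\ref{lemma4.2} to pass between original basic domains and elements of $L$, is what occupies the bulk of the argument.
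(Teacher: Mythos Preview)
Your proposal is correct and follows essentially the same approach as the paper: the paper's proof is the single sentence ``By Lemma~\ref{lemma4.5} and \cite[Theorem~3.28]{Mo} and by the reasoning in part~(ii) of the proof of Lemma~\ref{lemma4.5} we deduce,'' and you have supplied exactly the expansion of those three ingredients, including the key point that the Galois hypothesis on $\tilde Z/\tilde X$ gives $\alpha(L)=L$, which is what guarantees that the restriction lands in~$\tilde Z$.
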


We assume $G=\gal(\mathcal M(\tilde Y)/\mathcal M(\tilde X))$ possesses the Krull topology (see \cite[Definition~17.5]{Mo}). Recall that $D=\deck(\tilde Y\!/\tilde X)$ has been given a similar topological structure in the beginning of this section. By Lemma~\ref{lemma4.5} we see

\begin{lemma}\label{lemma4.7}
Suppose $\tilde X\leqslant\tilde Z\leqslant\tilde Y$. Let $K=\mathcal M(\tilde Y)$, $N=\mathcal M(\tilde X)$ and $L=\mathcal M(\tilde Z)$. Then $\mathcal G=\mathcal G_{\tilde Y\!/\tilde X}$ is an isomorphism and a homeomorphism from $D=\deck(\tilde Y\!/\tilde X)$ to $G=\gal(K/N)$ and $\mathcal G|_E$ is an isomorphism from $E=\deck(\tilde Y\!/\tilde Z)$ to $H=\gal(K/L)$. \hfill $\square$
\end{lemma}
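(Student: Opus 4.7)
The plan is to assemble Lemma~\ref{lemma4.7} from the machinery already built up, especially Lemmas~\ref{lemma4.3} and~\ref{lemma4.5}, and then to check that the Krull topologies on $D$ and $G$ correspond basis-to-basis under~$\mathcal G$.

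First I would dispose of the algebraic part. Applying Lemma~\ref{lemma4.5} with $\tilde W=\tilde Y$ and the role of ``$\tilde Z$'' in that lemma played first by~$\tilde X$ and then by~$\tilde Z$ immediately yields that $\mathcal G=\mathcal G_{\tilde Y\!/\tilde X}:D\to G=\gal(K/N)$ and $\mathcal G_{\tilde Y\!/\tilde Z}:E\to H=\gal(K/L)$ are both group isomorphisms. The formula $\mathsf G_{\tilde\sigma}(\tilde f)=\tilde f\circ\tilde\sigma^{-1}$ is independent of which ambient deck group we regard $\tilde\sigma$ as living in, so $\mathcal G|_E=\mathcal G_{\tilde Y\!/\tilde Z}$, and hence $\mathcal G|_E:E\to H$ is the claimed isomorphism.

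The substantive step is the homeomorphism assertion. Because $\mathcal G$ is a group isomorphism it sends cosets to cosets and unions to unions, so it suffices to show that $\mathcal G$ carries the distinguished family of subgroups $\mathscr E=\{\deck(\tilde Y\!/\tilde Z'):\tilde Z'\in\operatorname{FG}(\tilde Y\!/\tilde X)\}$, which defines the Krull topology on~$D$, bijectively onto the corresponding family $\{\gal(K/L'):L'/N\text{ finite Galois, }L'\subseteq K\}$ that defines the Krull topology on~$G$. For this I would proceed as follows. Lemma~\ref{lemma4.3} gives a partial-order preserving bijection $\tilde Z'\mapsto L'=\mathcal M(\tilde Z')$ between $\operatorname{Int}(\tilde Y\!/\tilde X)$ and $\operatorname{Int}(K/N)$. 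Under this bijection, finiteness of $\tilde Z'/\tilde X$ corresponds to finiteness of $L'/N$ by the very definition $[\tilde Z':\tilde X]:=[L':N]$, and the ``Galois'' condition on $\tilde Z'/\tilde X$ corresponds to the ``Galois'' condition on $L'/N$ by Lemma~\ref{lemma4.5} (applied with $\tilde W=\tilde Z'$ in place of $\tilde Y$ there). Thus the map $\tilde Z'\mapsto L'$ restricts to a bijection $\operatorname{FG}(\tilde Y\!/\tilde X)\leftrightarrow\{L'\in\operatorname{Int}(K/N):L'/N\text{ finite Galois}\}$. Finally, applying Lemma~\ref{lemma4.5} once more to $\tilde W=\tilde Y$ and the intermediate surface~$\tilde Z'$, one obtains $\mathcal G\bigl(\deck(\tilde Y\!/\tilde Z')\bigr)=\gal(K/L')$, since elements of $\deck(\tilde Y\!/\tilde Z')$ are precisely those $\tilde\sigma\in D$ for which $\mathsf G_{\tilde\sigma}$ fixes $L'=\mathcal M(\tilde Z')$ pointwise.

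Combining these observations, $\mathcal G$ maps the generating family $\mathscr E$ onto the generating family for the Krull topology on $G$, hence carries $\mathscr K_D$ onto the Krull topology on~$G$; the same holds for $\mathcal G^{-1}$ by symmetry, so $\mathcal G$ is a homeomorphism. The only place where any care is needed is in verifying the equality $\mathcal G(\deck(\tilde Y\!/\tilde Z'))=\gal(K/L')$; this is where I anticipate the main (though still routine) obstacle, because one has to be sure that ``$\tilde\sigma$ acts trivially on the $\tilde f\in L'$'' in the sense of restriction of mappings on algebraic Riemann surfaces corresponds exactly to ``$\mathsf G_{\tilde\sigma}$ fixes $L'$'' as a field automorphism. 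This is essentially built into the definition of $\mathcal M_{\tilde Y}(\tilde Z')$ together with the fiber-preserving property of deck transformations, and it is the same identification already used in the proof of Lemma~\ref{lemma4.5}. With this in hand the proof is complete.
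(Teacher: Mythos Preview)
Your proposal is correct and follows essentially the same approach as the paper, which simply records Lemma~\ref{lemma4.7} as an immediate consequence of Lemma~\ref{lemma4.5} (the statement is marked with a bare $\square$, introduced by ``By Lemma~\ref{lemma4.5} we see''). Your argument makes explicit what the paper leaves tacit: the isomorphism comes from two applications of Lemma~\ref{lemma4.5}, and the homeomorphism comes from matching the defining neighborhood bases of the two Krull topologies via Lemmas~\ref{lemma4.3} and~\ref{lemma4.5}.
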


By Lemma~\ref{lemma4.7} and \cite[Theorem~17.6]{Mo} we have

\begin{proposition}\label{proposition4.8}
Suppose $D=\deck(\tilde Y\!/\tilde X)$ and $\mathscr K_D$ is its Krull topology. Then the topological space $(D,\mathscr K_D)$ is Hausdorff, compact and totally disconnected. \hfill $\square$
\end{proposition}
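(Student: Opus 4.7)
The plan is to reduce the statement entirely to the corresponding classical fact about Galois groups with the Krull topology, using the bridge already established in Lemma~\ref{lemma4.7}. Since the three properties (Hausdorff, compact, totally disconnected) are topological invariants, and Lemma~\ref{lemma4.7} supplies a homeomorphism $\mathcal G \colon (D,\mathscr K_D) \to (G,\mathscr K_G)$ with $G = \gal(K/N)$, it suffices to verify these properties on the Galois side, where \cite[Theorem~17.6]{Mo} delivers them directly.

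First, I would spell out that the isomorphism of Lemma~\ref{lemma4.7} is simultaneously a homeomorphism: by construction, the subbasis for $\mathscr K_D$ is the family $\{\tilde\tau E : \tilde\tau \in D,\ E = \Delta(\tilde Z) \text{ for } \tilde Z \in \operatorname{FG}(\tilde Y/\tilde X)\}$, and under~$\mathcal G$ these cosets map bijectively onto the cosets $\mathcal G(\tilde\tau)\,\mathcal G(E) = \mathcal G(\tilde\tau)\,\gal(K/L)$ with $L = \mathcal M(\tilde Z)$. By Lemma~\ref{lemma4.5}, $\tilde Z/\tilde X$ is Galois and finite exactly when $L/N$ is Galois and finite, so $\mathcal G$ carries the subbasis of $\mathscr K_D$ bijectively to the subbasis of the Krull topology on~$G$. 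This makes $\mathcal G$ both open and continuous.

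Second, I would simply quote \cite[Theorem~17.6]{Mo}: the Galois group $G = \gal(K/N)$ equipped with its Krull topology is Hausdorff, compact, and totally disconnected. Transporting these three properties across the homeomorphism $\mathcal G^{-1}$ yields the same properties for $(D,\mathscr K_D)$, completing the proof.

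The main obstacle, if any, is bookkeeping rather than substance: one must confirm that the identification of subbases really is subbasis-to-subbasis (and not merely basis-to-basis) and that no finiteness or Galois hypothesis is lost in translation between~$\operatorname{FG}(\tilde Y/\tilde X)$ and the finite Galois intermediate extensions of~$K/N$. Both of these are guaranteed by Lemmas~\ref{lemma4.3}, \ref{lemma4.5}, and~\ref{lemma4.7}, so the argument compresses to a one-line invocation once the homeomorphism is noted.
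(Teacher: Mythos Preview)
Your proposal is correct and follows exactly the paper's approach: the paper simply states the proposition as an immediate consequence of Lemma~\ref{lemma4.7} and \cite[Theorem~17.6]{Mo}, which is precisely the reduction you describe. Your additional verification that $\mathcal G$ carries the defining subbasis of $\mathscr K_D$ to that of the Krull topology on~$G$ is a useful elaboration of what the paper leaves implicit in the word ``homeomorphism'' in Lemma~\ref{lemma4.7}.
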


For $H\leqslant G$ let $\mathscr F(H)$ denote the fixed field of~$H$. Define
\begin{equation}\label{4.13}
\Gamma:=\mathcal R\circ\mathscr F\circ\mathcal G,
\end{equation}
where $\mathcal R$ and~$\mathcal G=\mathcal G_{\tilde Y\!/\tilde X}$ are defined in Lemmas~\ref{lemma4.3} and~\ref{lemma4.5} respectively.

\begin{lemma}\label{lemma4.9}
For $E\leqslant D=\deck(\tilde Y\!/\tilde X)$ we have the closure
$$
\overline{\!E}=\deck(\tilde Y\!/\Gamma(E)).
$$
Thus $E$ is closed precisely if $E=\deck(\tilde Y\!/\Gamma(E))$.
\end{lemma}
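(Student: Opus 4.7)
The plan is to transport everything through the isomorphism $\mathcal{G} = \mathcal{G}_{\tilde Y/\tilde X}$ from $D = \deck(\tilde Y/\tilde X)$ to $G = \gal(K/N)$ (where $K = \mathcal M(\tilde Y)$ and $N = \mathcal M(\tilde X)$), apply the classical Krull closure theorem in the infinite Galois group $G$, and then pull the conclusion back through $\mathcal G$ and $\mathcal R$. By Lemma~\ref{lemma4.7}, $\mathcal G$ is not only a group isomorphism but also a homeomorphism with respect to the Krull topologies on $D$ and $G$; hence closures are preserved, i.e.\ $\mathcal G(\overline{\!E}) = \overline{\mathcal G(E)}$ for every subgroup $E\leqslant D$.

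The classical Krull theorem (see e.g.\ \cite[Theorem~17.8]{Mo}) asserts that in a Galois extension $K/N$, for every subgroup $H\leqslant G$ one has $\overline{H} = \gal(K/\mathscr F(H))$ in the Krull topology on $G$. Apply this with $H = \mathcal G(E)$ to obtain
\[
\mathcal G(\overline{\!E}) = \overline{\mathcal G(E)} = \gal(K/\mathscr F(\mathcal G(E))) = \gal(K/L),
\]
where $L := \mathscr F(\mathcal G(E)) \in \operatorname{Int}(K/N)$.

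To translate the right-hand side back to $D$, set $\tilde Z := \mathcal R(L)$, so that $\mathcal M(\tilde Z) = L$ by Lemma~\ref{lemma4.3} and $\tilde Z = \Gamma(E)$ by the very definition~(\ref{4.13}). Applying Lemma~\ref{lemma4.5} to the pair $\tilde Z\leqslant\tilde Y$ gives the isomorphism $\mathcal G_{\tilde Y/\tilde Z}:\deck(\tilde Y/\tilde Z)\to\gal(K/L)$; since $\mathcal G_{\tilde Y/\tilde Z}$ is just the restriction of $\mathcal G$ (both are given by $\tilde\sigma\mapsto(\tilde f\mapsto\tilde f\circ\tilde\sigma^{-1})$), the subgroup $\mathcal G^{-1}(\gal(K/L))$ of $D$ equals $\deck(\tilde Y/\tilde Z)=\deck(\tilde Y/\Gamma(E))$. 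Applying $\mathcal G^{-1}$ to the displayed equality above therefore yields $\overline{\!E} = \deck(\tilde Y/\Gamma(E))$, as required. The final assertion ``$E$ is closed $\Leftrightarrow E = \deck(\tilde Y/\Gamma(E))$'' is then immediate.

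The only real obstacle is confirming that the machinery developed in Lemmas~\ref{lemma4.3}, \ref{lemma4.5} and~\ref{lemma4.7} lets the classical Krull statement be imported verbatim; once the homeomorphism property of $\mathcal G$ and the commutativity of $\mathcal G$ with the restriction $\mathcal G\mapsto\mathcal G_{\tilde Y/\tilde Z}$ are in hand, the argument is purely formal transport of structure and no new analytic or topological input is needed.
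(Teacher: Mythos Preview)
Your argument is correct and follows essentially the same route as the paper: transport $E$ to $H=\mathcal G(E)$ via the homeomorphism of Lemma~\ref{lemma4.7}, apply the Krull closure identity $\overline{H}=\gal(K/\mathscr F(H))$, and pull back using $\mathcal G^{-1}(\gal(K/L))=\deck(\tilde Y/\tilde Z)$ with $\tilde Z=\mathcal R(L)=\Gamma(E)$. The only slip is the citation: the closure statement $\overline{H}=\gal(K/\mathscr F(H))$ is \cite[Theorem~17.7]{Mo}, not~17.8 (which is the full fundamental theorem), and the paper invokes Lemma~\ref{lemma4.7} directly for the last identification rather than routing through Lemma~\ref{lemma4.5}.
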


\begin{proof}
Denote $H=\mathcal G(E)$. Suppose $L=\mathscr F(H)$ and $\tilde Z=\mathcal R(L)$. Then $\mathcal M(\tilde Z)=L$ by Lemma~\ref{lemma4.3} and
$$
\tilde Z=(\mathcal R\circ\mathscr F)(H)=(\mathcal R\circ\mathscr F\circ\mathcal G)(E)=\Gamma(E).
$$
Therefore, by Lemma~\ref{lemma4.7} and \cite[Theorem~17.7]{Mo} it follows that
\begin{equation*}
\begin{split}
\overline{\!E}
=\mathcal G^{-1}(\,\overline{\!H})&=\mathcal G^{-1}(\gal(K/\mathscr F(H)) \\
&=\mathcal G^{-1}(\gal(K/L))=\deck(\tilde Y\!/\tilde Z)=\deck(\tilde Y\!/\Gamma(E)),
\end{split}
\end{equation*}
where $K=\mathcal M(\tilde Y)$.
\end{proof}

\par\smallskip
\noindent
\textit{Proof of Theorem~$\ref{theorem4.1}$.} \,Suppose $\tilde Z\in\operatorname{Int}(\tilde Y\!/\tilde X)$ and $E=\deck(\tilde Y\!/\tilde Z)$. Let $K=\mathcal M(\tilde Y)$, $N=\mathcal M(\tilde X)$ and $L=\mathcal M(\tilde Z)$. Then $\tilde Z=\mathcal R(L)$ by Lemma~\ref{lemma4.3} and $K/N$ is Galois by Lemma~\ref{lemma4.5}. Hence, $K/L$ is Galois. Let $H=\mathcal G(E)$. Then we have $H=\gal(K/L)$ by Lemma~\ref{lemma4.5}. Hence by \cite[Lemma~2.9(6)]{Mo} it follows $H=\gal(K/\mathscr F(H))$, which means that $H$ is closed by \cite[Theorem~17.7]{Mo} and so is~$E$ by Lemma~\ref{lemma4.7}. By Lemma~\ref{lemma4.5} and the fundamental theorem of infinite Galois theory (Krull's, see \cite[Theorem~17.8]{Mo}) (or \cite[Definition~2.15]{Mo}, since $K/L$ is Galois) we have
\begin{equation*}
\begin{split}
(\Gamma\circ\Delta)(\tilde Z)&=(\mathcal R\circ\mathscr F\circ\mathcal G)(\deck(\tilde Y\!/\tilde Z)) \\
&=(\mathcal R\circ\mathscr F)(\gal(K/L))=\mathcal R(L)=\tilde Z.
\end{split}
\end{equation*}

On the other hand, for a closed subgroup~$E$ of $D=\deck(\tilde Y\!/\tilde X)$, let $\tilde Z=\Gamma(E)$. Then $\tilde Z\in\operatorname{Int}(\tilde Y\!/\tilde X)$ by Lemmas~\ref{lemma4.3} and~\ref{lemma4.7} and \cite[Theorem~17.8]{Mo} and by Lemma~\ref{lemma4.9} we have $E=\deck(\tilde Y\!/\tilde Z)=\Delta(\tilde Z)$. Therefore,
$$
(\Delta\circ\Gamma)(E)=\Delta(\tilde Z)=E.
$$

By Lemma~\ref{lemma4.7} we know that $[D:E]=[G:H]$ and that $E$ is open in~$D$ if and only if $H$ is open in~$G$. Thus, by \cite[Theorem~17.8]{Mo}, assertion~(1) in Theorem~\ref{theorem4.1} follows since $[\tilde Z:\tilde X]=[L:N]$, where $L=\mathcal M(\tilde Z)$ and $N=\mathcal M(\tilde X)$.

By Lemmas~\ref{lemma4.5} and~\ref{lemma4.7} we have the following isomorphisms
$$
\deck(\tilde Z/\tilde X)\cong\gal(L/N) \quad\text{and}\quad D/E\cong G/H
$$
when $E$ is normal in~$D$, which are also homeomorphisms. Hence, assertion~(2) in Theorem~\ref{theorem4.1} follows by Lemmas~\ref{lemma4.5} and~\ref{lemma4.7} and \cite[Theorem~17.8]{Mo}.

According to Lemma~\ref{lemma4.6} and noticing that when $\tilde\sigma\in\deck(\tilde Y\!/\tilde X)$ and $\tilde Z/\tilde X$ is Galois we have $\tilde\sigma|_{\tilde Z}=\id_{\tilde Z}$ if and only if $\tilde\pi\circ\tilde\sigma=\tilde\pi$ (refer to part~(ii) of the proof of Lemma~\ref{lemma4.5}), where $\tilde\pi:\tilde Y\to\tilde Z$ is the natural covering map, we can also deduce that if $\tilde Z/\tilde X$ is Galois then the mapping from $D/E$ to $\deck(\tilde Z/\tilde X)$ defined by $\tilde\sigma E\mapsto\tilde\sigma|_{\tilde Z}$ is both an isomorphism and a homeomorphism by similar reasoning to the proof of \cite[Theorem~17.8]{Mo} and by Proposition~\ref{proposition4.8}. 
\hfill $\square$

\par\medskip


For the finite case, as a corollary of Theorem~\ref{theorem4.1}, we have
\begin{theorem}[Galois Correspondence on Algebraic Riemann Surfaces in the Finite Case]\label{theorem4.10}
Suppose $\tilde X$ and~$\tilde Y$ are algebraic Riemann surfaces and $\tilde Y$ is a finite Galois covering of~$\tilde X$. Let $D=\deck(\tilde Y\!/\tilde X)$. Then the mapping
\begin{equation*}
\begin{split}
\Delta:\,\operatorname{Int}(\tilde Y\!/\tilde X)&\to\mathfrak C \\
\tilde Z&\mapsto\deck(\tilde Y\!/\tilde Z)
\end{split}
\end{equation*}
is a bijection, which gives an inclusion reversing correspondence and whose inverse mapping is\/ $\Gamma$ given in~$(\ref{4.13})$. Moreover, if $E=\Delta(\tilde Z)$ then
\medskip\par\noindent
$(1)$ $[\tilde Y:\tilde Z]=|E|$ $($the order of~$E)$ and $[\tilde Z:\tilde X]=[D:E];$
\medskip\par\noindent
$(2)$ $\tilde Z$ is Galois over~$\tilde X$ if and only if $E$ is normal in~$D$, and on this condition we have a group isomorphism
$$
\deck(\tilde Z/\tilde X)\cong D/E. \eqno \square
$$
\end{theorem}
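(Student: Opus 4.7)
The plan is to deduce Theorem~\ref{theorem4.10} as a direct specialization of Theorem~\ref{theorem4.1}, with the only substantive extra observation being the identification $[\tilde Y\!:\!\tilde Z]=|E|$.

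First I would note that when $[\tilde Y\!:\!\tilde X]$ is finite, Lemma~\ref{lemma4.7} gives an isomorphism and homeomorphism $\mathcal G:D\to\gal(K/N)$ onto a finite Galois group, where $K=\mathcal M(\tilde Y)$ and $N=\mathcal M(\tilde X)$. On a finite group the Krull topology collapses to the discrete topology, so every subgroup of~$D$ is automatically open and closed; thus $\mathfrak C$ coincides with the set of all subgroups of~$D$, as already remarked after the definition of~$\mathfrak C$. This removes the closedness hypothesis from the correspondence.

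Next I would invoke Theorem~\ref{theorem4.1} directly: the bijection $\Delta$, the inclusion-reversing property, and the inverse formula $\Gamma$ from~$(\ref{4.13})$ carry over unchanged, yielding the main statement. For part~(1), the equality $[\tilde Z\!:\!\tilde X]=[D\!:\!E]$ is exactly the content of Theorem~\ref{theorem4.1}(1), since finiteness of $[\tilde Y\!:\!\tilde X]$ forces finiteness of $[D\!:\!E]$. Part~(2) is a verbatim restatement of Theorem~\ref{theorem4.1}(2), with the observation that in the finite setting the quotient topology on $D/E$ is automatically discrete, so no topological refinement is needed.

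The only item not already packaged by Theorem~\ref{theorem4.1} is $[\tilde Y\!:\!\tilde Z]=|E|$. For this I would first argue that $\tilde Y/\tilde Z$ is Galois: by Lemma~\ref{lemma4.5}, the hypothesis that $\tilde Y/\tilde X$ is Galois translates to $K/N$ being Galois, and the classical tower fact that $K/L$ is Galois whenever $N\leqslant L\leqslant K$ with $K/N$ Galois (where $L=\mathcal M(\tilde Z)$) combines, via Lemma~\ref{lemma4.5} in the reverse direction, to give that $\tilde Y/\tilde Z$ is Galois. Lemma~\ref{lemma4.5} then supplies the isomorphism $E=\deck(\tilde Y\!/\tilde Z)\cong\gal(K/L)$, and for the finite Galois extension $K/L$ we have $|\gal(K/L)|=[K\!:\!L]=[\tilde Y\!:\!\tilde Z]$. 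No serious obstacle is anticipated; the argument is essentially a transcription, the main subtlety being the routine verification that Galois-ness descends in the upper factor of a tower of algebraic Riemann surfaces, which Lemma~\ref{lemma4.5} reduces to a statement about fields.
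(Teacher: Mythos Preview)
Your proposal is correct and matches the paper's approach: the paper presents Theorem~\ref{theorem4.10} as an immediate corollary of Theorem~\ref{theorem4.1} with no further argument (note the $\square$ terminating the statement and the prefatory ``For the finite case, as a corollary of Theorem~\ref{theorem4.1}, we have''). Your added justification for $[\tilde Y:\tilde Z]=|E|$ via Lemma~\ref{lemma4.5} and the standard field-theoretic identity $|\gal(K/L)|=[K:L]$ is exactly the kind of routine unpacking the paper leaves implicit.
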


\vskip3mm

School of Mathematical Sciences,
Shanghai Jiao Tong University,

Shanghai 200240, China

\textit{E-mail}: jyyu@sjtu.edu.cn

\end{document}